\documentclass[mnsc,nonblindrev,copyedit]{informs3a}
\usepackage{amsmath,amssymb,mathtools,mathrsfs}
\usepackage[dvipsnames]{xcolor}
\usepackage{booktabs,enumerate,graphicx,tikz,placeins,natbib}
\usetikzlibrary{arrows.meta,positioning,calc}
\definecolor{DarkBlue}{rgb}{0,0.08,0.45}
\usepackage[backref=false,bookmarks,breaklinks=true,colorlinks=true,hypertexnames=false,plainpages=false,citecolor=DarkBlue,urlcolor=DarkBlue,linkcolor=DarkBlue,pdftitle={A 5/3 Guarantee for Echelon Stock (R,nQ) Policies in Two-Stage Stochastic Serial Systems},pdfauthor={Ming Hu}]{hyperref}
\bibpunct[, ]{(}{)}{,}{a}{}{,}
\def\bibfont{\fontsize{10.5}{17.5}\selectfont}

\TheoremsNumberedThrough\EquationsNumberedThrough\MANUSCRIPTNO{}
\newcommand{\OPTsh}{\mathrm{OPT}^{\mathrm{sh}}}
\newcommand{\OPTlot}{\mathrm{OPT}^{\mathrm{lot}}}
\newcommand{\Csh}{C^{\mathrm{sh}}}\newcommand{\Clot}{C^{\mathrm{lot}}}
\newcommand{\RnQ}{(R,nQ)}
\newcommand{\LBsp}{\mathrm{LB}^{\mathrm{SP}}}

\newcommand{\E}{\mathbb E}\newcommand{\ph}{\phi}

\let\OriginalDoubleSize\normalsizeXIyesCE
\renewcommand\normalsizeXIyesCE{\OriginalDoubleSize\baselineskip19.5pt}
\begin{document}
\raggedbottom
\RUNAUTHOR{Hu}

\RUNTITLE{A \(5/3\) Guarantee for Echelon Stock \((R,nQ)\) Policies}

\TITLE{\Large A \(5/3\) Guarantee for Echelon Stock \((R,nQ)\) Policies \\in Two-Stage Stochastic Serial Systems}
\ARTICLEAUTHORS{\AUTHOR{Ming Hu} \AFF{Rotman School of Management, University of Toronto,
Toronto, Ontario M5S 3E6, Canada\\
\EMAIL{ming.hu@rotman.utoronto.ca}}}

\ABSTRACT{Fixed shipment costs encourage large orders, while inventory and customer backlog costs favor frequent replenishment. In a serial system, the stages must also coordinate their lot sizes and material availability. We study the classical echelon stock \((R,nQ)\) policy: the downstream stage requests a fixed lot, the upstream stage orders an integer multiple of that lot, and transfers contain complete lots. For unit Poisson demand, full backlogging, and at both stages, deterministic lead times, and setup and holding costs, we prove that the optimized cost in this class is at most \(5/3\approx1.667\) of the optimal admissible system cost, uniformly over all nonnegative cost rates, demand rates, and lead times. The guarantee holds even when the selected policy pays a setup charge for every lot while the optimal benchmark pays only once per aggregate shipment. To obtain the guarantee, we use a common inventory calculation that allows independently chosen replenishment frequencies while respecting upstream material availability. We compare its optimized value, an upper bound on the best integer-ratio policy cost, with a novel lower bound on the optimal system cost that combines allocated single-stage costs with the minimum inventory cost when setup costs are zero. We also show that no uniform factor below \((1+\sqrt{3})/2\approx1.366\) is possible for the classical class. Existing numerical experiments place best-found classical policies within \(22.7\%\) of an evaluated lower bound across three broad grids. The results quantify both the reliability and the limits of a simple operating rule and explain how shipment consolidation, replenishment timing, and flexibility affect its value.}

\maketitle

\vspace{-1.5em}

\section{Introduction}\label{sec:introduction}

A distribution center replenishes a local warehouse in truckloads, while the center itself replenishes from a supplier. A production facility supplies an assembly plant in batches while purchasing material in larger quantities upstream. In both settings, a fixed charge makes frequent replenishment expensive, but holding too much inventory ties up resources and exposes the system to storage costs. Customer backlog creates a third consideration. Replenishment lead times complicate these tradeoffs: orders must anticipate demand during transit, and upstream shortages can further delay downstream deliveries. The locations must, therefore, coordinate both order quantities and timing.

The classical echelon stock \((R,nQ)\) policy provides a compact way to coordinate these decisions. The downstream stage requests a fixed lot of \(Q\) units. The upstream stage orders \(nQ\) units, where \(n\) is a positive integer. Two reorder points determine when the requests are made. When material is available, transfers contain complete downstream lots. This policy is attractive because the operating instructions are easy to communicate, the replenishment quantities can be standardized, and the number of controls stays small. Its restrictions also create a legitimate concern: how much performance can be lost by committing to this structure?

The answer is not apparent from two separate lot-sizing calculations. One stage may prefer a large lot because its setup cost is high, while the other prefers a small lot because inventory is expensive there. The preferred upstream lot can even be smaller than the preferred downstream lot. For example, upstream replenishments may be inexpensive releases under a supplier agreement, while downstream transfers require a dedicated truck. The integer-multiple restriction then prevents the two separate preferred lot sizes from being used together. Moreover, a downstream order may have to wait for material even when its nominal inventory position has triggered replenishment. A cost comparison must account for that waiting and for the additional stock held while enough material accumulates to complete a transfer lot.

A central lesson of inventory theory is that simple operating rules can perform well even in complex systems. The \(98\%\)-effective policies of \citet{Roundy1985} demonstrate this principle for deterministic one-warehouse multiretailer systems: coordinating replenishment intervals through powers of two preserves nearly all the value of optimal control. Such a guarantee establishes the value of simplicity without requiring a complete description of an optimal policy. We pursue the same goal for a stochastic serial system with setup costs and lead times at both stages, where the optimal policy is unknown. Our \(5/3\) guarantee quantifies the performance that the classical echelon \((R,nQ)\) policy can retain across all parameter choices.

We study a two-stage continuous-review serial inventory system with unit Poisson demand, full backlogging, and at both stages, nonnegative deterministic lead times, and setup and holding costs. Shipment quantities are integer. Our main result is a uniform guarantee:
\begin{equation}
\inf_{\Pi\in\mathcal P_{\RnQ}}\Clot(\Pi)
\le \frac53\OPTsh.
\label{eq:intro-main}
\end{equation}
Here \(\mathcal P_{\RnQ}\) denotes classical rules with integer controls \(Q,n\ge1\) and a compatible initialization, as defined in Appendix~\ref{app:stationary-construction}. The left side charges a setup cost for every complete lot moved, including lots dispatched together. The benchmark \(\OPTsh\) allows all admissible integer shipment policies and charges once for each positive aggregate dispatch. Thus, the comparison gives the classical policy a conservative cost convention while allowing the benchmark to obtain shipment-consolidation savings. It implies a \(5/3\) guarantee when both policies use either convention.

The factor is independent of the relative setup costs, holding costs, backlog penalty, demand rate, and lead times. Both lead times may be positive or zero. At zero cost coefficients, the result is stated for the infimum of long-run average costs, allowing finite policies that approach the guarantee arbitrarily closely. With positive holding and backlog costs, the construction yields an attained finite policy. The theorem is a worst-case guarantee across environments, which does not mean that a well-chosen classical policy typically incurs a cost premium of 2/3.

We establish a limit on how far such a uniform guarantee can be improved. We construct instances in which even the best classical policy costs over \(36\%\) more than a feasible alternative outside the classical class. This alternative orders one unit at a time from the external supplier and combines available units into downstream shipments of varying sizes. Thus, no uniform factor below
\begin{equation}
\frac{1+\sqrt3}{2}\approx1.366
\label{eq:intro-impossibility}
\end{equation}
can hold for the classical class. This example explains why the classical link between the two lot sizes can be costly when inexpensive unit upstream replenishment can support infrequent downstream transfers. The upper guarantee \(5/3\) and the impossibility example place the worst possible loss from the classical restriction in a nontrivial range. The sharpness of a guarantee remains open.

We prove \eqref{eq:intro-main} using a simple coordination argument. We first imagine choosing the two replenishment cycle sizes separately while still accounting for the material link between the stages. This intermediate calculation has more freedom than an integer-ratio policy. Nevertheless, every pair of integer cycle sizes can be converted into a feasible classical policy at no greater cost. The conversion groups upstream cycle positions according to their remainder after division by the downstream lot size. Each group corresponds to a physical upstream lot that is an integer multiple of the downstream lot. The weights of the groups keep average setup cost no higher than in the intermediate calculation, so at least one of the resulting integer-ratio policies costs no more than the intermediate calculation. Such an integer-ratio policy itself does not randomize among the groups. Optimizing the intermediate calculation, therefore, gives an upper bound on the best integer-ratio policy cost.

It remains to compare this optimized intermediate value with the optimal system cost. We construct two lower bounds on the optimal system cost by allocating holding and backlog costs between two single-stage problems and the setup-costs-free system. These lower-bound formulas evaluate costs at doubled or quadrupled setup charges. To connect those calculations to the intermediate calculation that provides an upper bound on the best integer-ratio policy cost, we compare any pair of replenishment cycles with integer cycles of approximately half their respective sizes. Separating the even-numbered and odd-numbered positions quantifies the tradeoff between more frequent setups and lower average inventory costs. Applying this comparison to the two lower bounds gives estimates in terms of the original intermediate value and the minimum inventory cost when setups are free. The latter cost enters the estimates with opposite signs. A weighted average of the two lower bounds cancels it and shows that the optimal system cost is at least \(3/5\) of the optimized intermediate value. Combining this lower bound with the policy cost bound above gives the \(5/3\) guarantee.

The paper makes three contributions. First, it establishes a uniform guarantee for the exact integer classical class under both cost conventions. With either per-lot or per-shipment setup costs, a classical policy retains a guaranteed share of the best attainable performance even when independent stage calculations would recommend incompatible replenishment sizes. Second, we provide a way to turn independently chosen integer cycle sizes into implementable integer-ratio lots without adding an uncontrolled rounding cost. Third, the impossibility construction shows how a policy outside the classical class can reduce costs by combining frequent unit orders from the external supplier into downstream shipments of varying sizes.

The lower bound on the optimal system cost used to prove \eqref{eq:intro-main} also has a practical interpretation. A manager can compute two benchmarks using simpler inventory problems that optimize order quantities and stock levels for one stage at a time. These calculations also use the minimum holding and backlog cost of the full two-stage system when neither stage pays a setup charge. Their maximum is a valid lower bound on the optimal cost. Dividing a proposed policy's cost by the larger of these two benchmarks gives an instance-specific upper bound on its ratio to optimal cost, which can be much smaller than the uniform factor. We establish the uniform guarantee using only an equal allocation of the relevant costs and two choices of scale. Optimizing the cost allocation and scale can tighten the lower bound for a system with a particular set of primitives.

Our numerical study illustrates the performance of the classical policy. Across three broad parameter grids, the best-found classical policies cost at most \(22.7\%\) above the evaluated lower bound; the median gap is \(12.1\%\). A deliberately asymmetric stress grid produces larger ratios, reaching about \(1.60\). At the grid point with the largest policy-to-lower-bound ratio, changing the setup-accounting convention from charging per lot to charging per shipment improves the best evaluated classical cost by about \(8.7\%\). A finite search allowing incomplete downstream lots finds no improvement over the best classical policy found there, whose unit downstream lots cannot be split further because shipment quantities are integers.

For management, the results separate three decisions that are often combined. The first is whether to standardize replenishment quantities. The theorem quantifies a guarantee for doing so, while the impossibility example shows why standardization has a real potential cost. The second is how to coordinate the standardized quantities across stages. The construction permits common lots or larger upstream multiples and keeps the reorder-point difference available to adjust relative timing. The third is how to price a shipment that carries several lots. A fixed charge per dispatch and a fixed charge per lot can produce different preferred policies even for identical physical flows. Evaluating these decisions separately helps identify whether an observed cost gap calls for different lot sizes, different timing, or a different operating agreement.

Figure~\ref{fig:policy-choice} summarizes two ways to coordinate fixed replenishment quantities. Common lots use the same quantity at both stages; a larger upstream multiple spreads one upstream setup over several downstream lots. In either case, the reorder points remain available to adjust inventory protection and replenishment timing. We prove the constant guarantee by choosing these controls jointly.

\begin{figure}[!htbp]
\centering
\begingroup\SingleSpaced
\resizebox{0.7\linewidth}{!}{%
\begin{tikzpicture}[x=1cm,y=1cm,every node/.style={font=\small,align=center},>=Stealth]
\node[draw=DarkBlue,fill=DarkBlue!4,rounded corners=3pt,text width=7.15cm,minimum height=2.95cm,inner sep=5pt,anchor=north] (com) at (-3.95,0) {\textbf{Common lots}\\[4pt]\(n=1\): the same lot at both stages\\[2pt]Each upstream order supplies\\one downstream lot.\\[2pt]Frequent upstream replenishment is attractive\\when its setup charge is small.};
\node[draw=BrickRed,fill=BrickRed!4,rounded corners=3pt,text width=7.15cm,minimum height=2.95cm,inner sep=5pt,anchor=north] (mult) at (3.95,0) {\textbf{Consolidated upstream orders}\\[4pt]\(n>1\): several downstream lots per order\\[2pt]Spread the upstream setup charge\\over more units.\\[2pt]Keep downstream lots smaller, while holding\\the remaining material upstream.};
\path (com.south) -- (mult.south) coordinate[midway] (base);
\node[draw=gray!65,fill=gray!5,rounded corners=3pt,text width=15.1cm,inner sep=7pt,anchor=north] (joint) at ([yshift=-0.55cm]base) {\textbf{Choose lot sizes and replenishment timing together.}\\[5pt]We group cycle positions into feasible coordinated lots\\and compare the two stages' setup and inventory costs jointly.};
\draw[->,semithick,gray!75] (joint.north) -- ([yshift=-0.25cm]base) -| (com.south);
\draw[->,semithick,gray!75] ([yshift=-0.25cm]base) -| (mult.south);
\end{tikzpicture}
}
\endgroup
\caption{Two ways to coordinate fixed lots while retaining control over replenishment timing.}
\label{fig:policy-choice}
\end{figure}

\section{Related Research}\label{sec:literature}

\citet{ClarkScarf1960} introduce the echelon-stock approach to serial inventory systems. An echelon counts a location and the material committed to downstream locations. This accounting makes the dependence between locations explicit and supports a decomposition when fixed ordering costs are absent. Positive fixed costs also require coordination of shipment timing and quantities \citep{ClarkScarf1962}. \citet{DeBodtGraves1985} study continuous-review policies with linked lot sizes. \citet{Chen} formalize the echelon stock \((R,nQ)\) class and derive its evaluation formulas. \citet{ChenZheng1998} and \citet{Chen2000} develop its multistage and batch-ordering structure. We retain this classical policy definition, including integer ratios and complete-lot transfers.

A second line of research develops tractable approximations and lower bounds. \citet{ShangSong2003} use single-stage comparisons to obtain newsvendor bounds and heuristics for serial base-stock systems. \citet{ShangSong2007} study serial systems with economies of scale. \citet{ShangSongZipkin2009} consider coordination in decentralized batch-ordering systems, and \citet{ShangZhou2010} study echelon policies that combine integer-ratio quantities with periodic review and fixed ordering and review costs. These contributions help managers select and coordinate replenishment controls. We seek a uniform guarantee for exact integer-ratio lots relative to all admissible policies.

Our lower-bound foundation is the serial-system analysis of \citet{ChenZheng1994LB}. Their induced-penalty method represents downstream shortages as a cost in an upstream inventory problem and includes setup costs at both stages. The methods of \citet{FedergruenZheng1992} and the structural results in \citet{Zheng1992} support the evaluation of single-stage \((r,Q)\) models. Our lower bound combines allocated single-stage costs with the zero-setup serial value. The $5/3$ argument uses an equal allocation and two setup scales to connect this bound to the joint inventory calculation underlying the policy upper bound. The numerical study evaluates the broader family of allocations and setup scales.

Several performance results show how guarantees depend on the policies allowed and the costs used for comparison. For a two-stage system with zero upstream lead time, \citet{Chen1999} establishes a \(94\%\)-effective policy within a broader continuous-quantity cyclic framework. That framework allows the first downstream shipment in a cycle to differ from later shipments. \citet{HuYang2014} study modified echelon \((r,Q)\) policies that permit unrestricted lot ratios and partial transfers. Under their continuous-quantity convention, an induced upstream lot at least as large as the induced downstream lot gives a \(5/4\) guarantee. Their general bound depends on the setup-cost ratio and their setup accounting charges shipments. Our theorem retains integer quantities, complete-lot transfers, both lead times, and a constant independent of the setup-cost ratio. It also charges the selected policy for each lot. \citet{ZhuChenHuYang2021} extend modified echelon policies to stochastic distribution systems and give an asymptotic guarantee as fixed-cost ratios scale.

Integer-ratio coordination has a long history in deterministic inventory theory. The power-of-two policies of \citet{Roundy1985} coordinate replenishment intervals on a common time scale. Related approximation algorithms address deterministic warehouse-retailer and joint replenishment models \citep{LeviEtAl2008,BienkowskiEtAl2014,Segev2025}. Their common managerial question is whether a simpler replenishment schedule sacrifices much efficiency. Our argument shares the idea of controlling setup and inventory costs through coordinated changes of scale. We separate the even-numbered and odd-numbered cycle positions to compare costs at two scales, while the implemented policy permits arbitrary integer lot ratios and responds to realized demand. Filling a downstream request also requires material already available upstream. Our grouping argument preserves this material constraint when converting independently selected cycle sizes into integer-ratio lots, and the \(5/3\) guarantee follows by comparing their joint cost with system lower bounds.

Other stochastic models use different benchmarks. \citet{ChuShen2010} study power-of-two ordering intervals for a warehouse-retailer system, and \citet{NajyEtAl2025} develop an approximation scheme for a related ordering-interval model. These studies use a service-level and safety-stock formulation and compare performance within that formulation or against the best policy in the corresponding class. \citet{LeviEtAl2017} provide a balancing approach for finite-horizon multiechelon systems without the fixed shipment charges studied here. Asymptotic performance guarantees for dual-sourcing and lost-sales inventory models concern different restrictions \citep{XinGoldberg2018,Xin2022}. \citet{HuhJanakiraman2010} study serial systems in which unmet demand is lost, whereas our model fully backlogs unmet demand.

Our contribution combines an upper and a lower statement about the same classical restriction. The upper statement shows that the infimum cost of classical integer-ratio lots is within \(5/3\) of the admissible optimum. The lower statement shows that the best uniform constant cannot be arbitrarily close to one, even when each classical policy uses its best controls. We prove the \(5/3\) guarantee using a valid system lower bound; the lower example compares every classical policy with a specific feasible policy outside the restriction.
\section{The System and Its Cost Conventions}\label{sec:model}

\subsection{Material Flow and Decisions}

Stage~1 serves customers and replenishes from Stage~2. Stage~2 replenishes from an external supplier with unlimited supply. Customer demand arrives one unit at a time according to a Poisson process of rate \(\lambda\ge0\). The time from a Stage-2 dispatch to its arrival at Stage~1 is the deterministic lead time \(L_1\); upstream orders reach Stage~2 after deterministic lead time \(L_2\). Both times are finite and nonnegative. A customer whose demand cannot be filled immediately waits in backlog. There is no limit on order or shipment size other than material availability and integrality.

This is a model of a serial supply chain with stable demand characteristics. It permits substantial uncertainty in the number and timing of demands during a lead time, but it does not model capacity congestion, perishability, lost sales, or changing demand regimes. A policy can use the demand and inventory history to choose the timing and quantity of each dispatch, but cannot use future demand. A request to the upstream stage is distinct from a physical shipment: requesting material does not make it available downstream immediately, because the material must first be available upstream and then traverse the downstream lead time.

Let \(K_i\ge0\) be the fixed setup charge associated with a Stage-\(i\) dispatch. For every \(\lambda\ge0\), write \(k_i=\lambda K_i\), so \(k_i=0\) when demand is zero. This normalization makes the setup-cost rate of a fixed lot of \(Q_i\) units equal to \(k_i/Q_i\), since the stage ultimately supplies demand at rate \(\lambda\). The parameters \(h_1,h_2,p\ge0\) are the two echelon holding rates and the customer backlog rate. Throughout the paper, a cost rate means long-run expected dollars per unit of time.

An echelon counts a location and the physical material already committed to locations downstream of it. Let \(I_1(t)\) denote physical inventory at Stage~1, let \(I_2(t)\) denote physical inventory in the Stage-2 echelon, and let \(\mathsf B(t)\) be customer backlog. The Stage-2 echelon includes stock at Stage~2, material traveling to Stage~1, and stock at Stage~1. Consequently, a unit at Stage~1 carries the combined holding rate \(h_1+h_2\), while material still upstream carries the applicable upstream rate. The inventory and backlog cost rate is
\begin{equation}
c_I(t)=h_1I_1(t)+h_2I_2(t)+p\mathsf B(t).
\label{eq:instant-cost}
\end{equation}
The echelon rates are incremental cost rates \citep{Zipkin2000}. This convention avoids charging an upstream unit as though it had already incurred the full downstream holding cost.

Net echelon inventory subtracts customer backlog from physical echelon inventory. We denote it by \(\mathit{IL}_i(t)=I_i(t)-\mathsf B(t)\). The actual echelon inventory position \(\mathit{IP}_i(t)\) adds material dispatched toward that echelon but still in its incoming lead time. For Stage~1, a pending request that has not left Stage~2 is excluded. The key material restriction is that the actual Stage-1 position cannot exceed the available net Stage-2 inventory. Appendix~\ref{app:admissibility} gives the exact flow equations and the treatment of simultaneous arrivals, demand, and dispatches when a lead time is zero.

We require a policy to be feasible, to use only current and past information, and to keep expected costs finite on finite horizons. We also require long-run flow balance: customer deliveries and the quantities passing through both stages have average rate \(\lambda\). Terminal inventory, pipeline, and end-of-horizon unfinished-cycle effects must be negligible relative to the observation horizon. These conditions rule out satisfying an apparent cost bound by indefinitely postponing a positive fraction of demand or by moving a large cost beyond the reporting horizon. Their precise form is in Appendix~\ref{app:admissibility}. Every finite stationary classical policy constructed in this paper satisfies them. Our optimal benchmark uses the same admissible class throughout.

The nonnegative parameter boundaries deserve an operational interpretation. With \(\lambda=0\), empty initialization and no orders give zero cost. When a holding or backlog rate is zero, a sequence of increasingly large or increasingly delayed finite policies may approach an infimum without attaining it. We, therefore, formulate the full-space guarantee using cost infima. This convention preserves service and flow requirements even with no backlog charge.

\subsection{Shipment-Based and Lot-Based Setup Costs}

A truck may carry several replenishment lots. Whether it incurs one fixed charge or several depends on the activity represented by the charge. A vehicle dispatch fee is naturally assessed once per shipment. A cleaning, inspection, or handling charge incurred for each production lot can remain payable even when lots travel together. This distinction determines consolidation savings.

Under \emph{shipment accounting}, every positive aggregate dispatch at Stage~\(i\) incurs \(K_i\). The dispatch can contain any positive integer number of units available to move. Write \(Z_i^{\rm sh}(T)\) for its number of positive dispatches up to time \(T\). For a policy \(\Pi\), define
\begin{equation}
\Csh(\Pi)=\limsup_{T\to\infty}\frac1T\E_\Pi\left[\int_0^T c_I(t)\,dt+\sum_{i=1}^2 K_i Z_i^{\rm sh}(T)\right].
\label{eq:shipment-cost}
\end{equation}
The benchmark \(\OPTsh\) is the infimum of this cost over all admissible integer shipment policies. These policies may vary shipment quantities with the observed state.

Under \emph{lot accounting}, a policy first chooses an integer lot size at each stage and subsequently dispatches complete lots of those sizes. Each lot incurs \(K_i\), including every lot in a dispatch containing several lots. Let \(Z_i^{\rm lot}(T)\) count these lots with multiplicity, and let \(\Clot(\Pi)\) be defined as in \eqref{eq:shipment-cost} with these counts. Write \(\OPTlot\) for the infimum over the corresponding admissible fixed-lot policies. For three lots dispatched together, shipment accounting charges \(K_i\); lot accounting charges \(3K_i\).

The admissible fixed-lot class is broader than the classical \((R,nQ)\) class. A fixed-lot policy may choose any integer lot size at each stage and may time orders and dispatches using any admissible history-based rule. For example, downstream lots of four units and upstream lots of six units define a fixed-lot policy, but cannot define a classical \((R,nQ)\) policy because six is not an integer multiple of four. A classical \((R,nQ)\) policy also uses fixed reorder points. Therefore, $
\{\text{classical }(R,nQ)\text{ policies}\}
\subseteq
\{\text{admissible fixed-lot policies}\}.
$

Removing the lot labels from a fixed-lot policy preserves its material flows and cannot increase its shipment-accounted cost. Thus, 
\begin{equation}
\Csh(\Pi)\le\Clot(\Pi),\qquad \OPTsh\le\OPTlot.
\label{eq:cost-order}
\end{equation}
Our main guarantee compares the larger classical-policy cost with the smaller benchmark. It, therefore, remains valid when the manager and the optimal comparison use the same convention. By contrast, the impossibility result in Section~\ref{sec:impossibility} is measured against \(\OPTsh\) only. This benchmark permits the variable shipment quantities used in the construction, which lie outside the fixed-lot class that defines \(\OPTlot\).


The conventions change setup accounting and can, therefore, change the optimal policy, but they do not change which physical actions are feasible. All policies operate under the same demand process, lead times, material availability constraints, and holding and backlog rates. The classical \((R,nQ)\) policy offers a simple way to coordinate replenishment using only a few fixed controls.

\section{The Classical Rule and Its Exact Cost}\label{sec:policy}

\subsection{Four Controls with Distinct Operational Roles}\label{sec:event-rule}

A classical policy chooses two integer reorder points \(R_1,R_2\), a positive integer downstream lot size \(Q\), and a positive integer multiplier \(n\). Stage~1 requests a lot of \(Q\) units when its nominal echelon inventory position crosses its reorder point. Stage~2 orders \(nQ\) from the external supplier when its echelon position crosses its own reorder point. A downstream request waits if a complete lot is unavailable. Once material is available, the stage sends the pending complete lots and the transportation lead time begins. All four controls are fixed before future demands are observed.

To make the timing convention explicit, we initialize the complete-lot rule with a whole number of downstream lots held at Stage~2 and the compatible pending requests and pipelines defined in Appendix~\ref{app:stationary-construction}. We use two counters to record Stage~1's replenishment requests and Stage~2's orders to the external supplier; both are driven by the same cumulative demand count. Its starting point within the replenishment cycle can be any fixed remainder, without affecting the long-run cost. Appendix~\ref{app:initial-remainder} shows that allowing an additional upstream remainder smaller than \(Q\) cannot improve the cost optimized over the four controls.

The two reorder points are allowed to be negative. With backlogging, a negative reorder point can be an intentional response to a low backlog penalty or expensive inventory. The quantities remain positive integers, and a shipment can never contain material that has not arrived. Appendix~\ref{app:policy-cost} constructs the rule from demand and shipment counters and verifies these requirements directly.

For a fixed multiplier \(n\), it is useful to think in terms of three remaining decisions: \(Q\), the downstream cycle endpoint \(s=R_1+Q\), and the reorder-point difference \(\ell=R_2-R_1\). The lot size \(Q\) determines the downstream request rate and the setup rate under lot accounting. The endpoint \(s\) sets the upper limit of the nominal downstream inventory-position cycle. The difference \(\ell\) determines the relative replenishment timing of the stages. Holding \(Q\) and \(s\) fixed, positive \(\ell\) shifts upstream orders earlier and negative \(\ell\) shifts them later compared with \(\ell=0\). Raising both reorder points by the same amount increases \(s\) while keeping \(\ell\) fixed, adding inventory protection and preserving the stages' relative replenishment timing.

Figure~\ref{fig:rnq-regimes} separates the quantity decision from the reorder-point decision. Panels (a) and (b) use common lots with different alignments. Panels (c) and (d) put three downstream lots in each upstream order, with opposite signs of \(R_2-R_1\). These are the four best-found configurations reported in Table~\ref{tab:configurations}. The columns show nominal inventory-position ranges; their lower edges locate the reorder points. A higher upstream reorder point provides more inventory protection upstream, while a larger multiplier changes the amount ordered each time.

\begin{figure}[t]
\centering
\begingroup\SingleSpaced
\definecolor{RegimeBlue}{HTML}{2C6E9B}
\definecolor{RegimeOrange}{HTML}{C46D27}
\definecolor{RegimePurple}{HTML}{6A1B9A}
\setlength{\fboxsep}{0.35pt}
\setlength{\unitlength}{0.01\linewidth}
\scalebox{0.756}{%
\begin{picture}(100,74)
  \put(0,0){\includegraphics[
    width=100\unitlength,
    height=74\unitlength
  ]{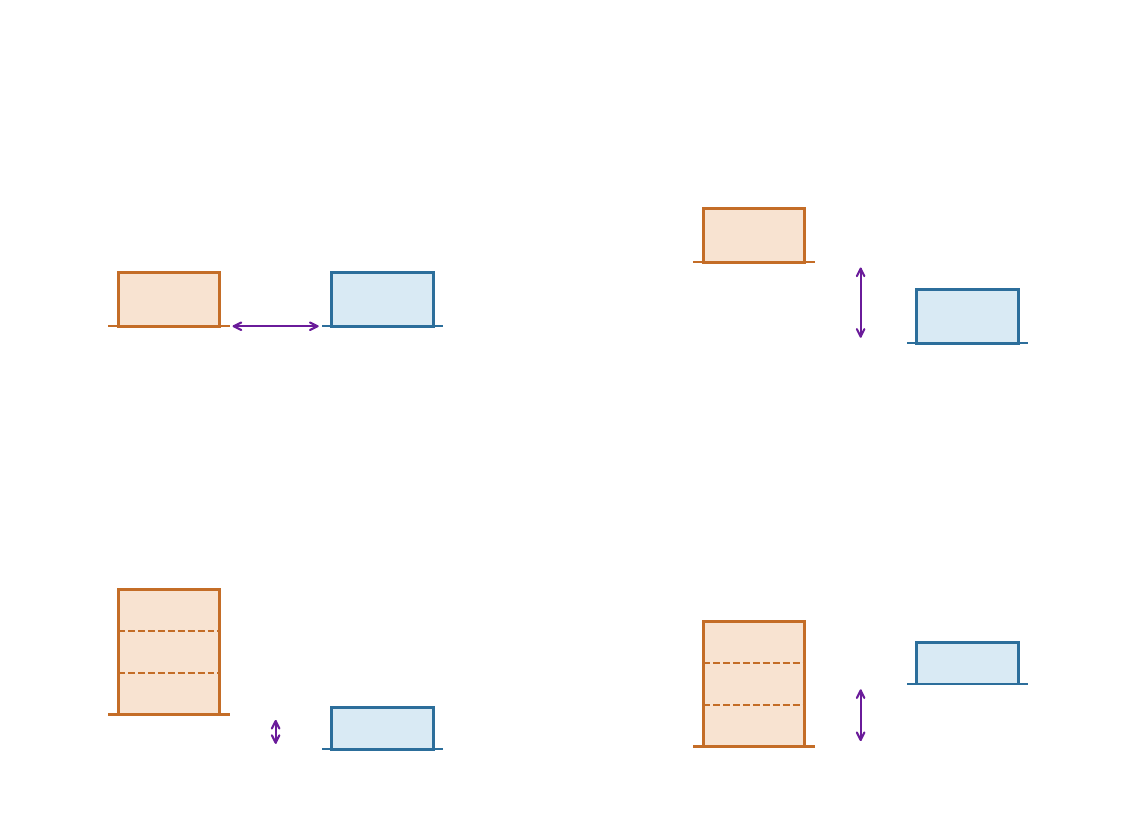}}

  \put(1,68.0){\makebox(0,0)[lb]{\small\bfseries
    (a) \(n=1,\ R_2=R_1\)}}
  \put(15,62.0){\makebox(0,0)[b]{\footnotesize\bfseries\color{RegimeOrange}
    \shortstack{Stage 2\\[-0.4ex](upstream)}}}
  \put(34,62.0){\makebox(0,0)[b]{\footnotesize\bfseries\color{RegimeBlue}
    \shortstack{Stage 1\\[-0.4ex](downstream)}}}
  \put(15,47.4){\makebox(0,0){\small\(nQ=4\)}}
  \put(34,47.4){\makebox(0,0){\small\(Q=4\)}}
  \put(9.2,45.0){\makebox(0,0)[r]{\small\textcolor{RegimeOrange}{\(R_2\)}}}
  \put(40.8,45.0){\makebox(0,0)[l]{\small\textcolor{RegimeBlue}{\(R_1\)}}}
  \put(24.5,42.5){\makebox(0,0){\colorbox{white}{\footnotesize
    \textcolor{RegimePurple}{\(R_2-R_1=0\): aligned}}}}
  \put(1,39.2){\makebox(0,0)[lb]{\footnotesize
    \((R_1,R_2,Q,n)=(0,0,4,1)\)}}
  \put(1,37.2){\parbox[t]{46\unitlength}{\raggedright\footnotesize
    Common lots and equal reorder points.}}

  \put(53,68.0){\makebox(0,0)[lb]{\small\bfseries
    (b) \(n=1,\ R_2>R_1\)}}
  \put(67,62.0){\makebox(0,0)[b]{\footnotesize\bfseries\color{RegimeOrange}
    \shortstack{Stage 2\\[-0.4ex](upstream)}}}
  \put(86,62.0){\makebox(0,0)[b]{\footnotesize\bfseries\color{RegimeBlue}
    \shortstack{Stage 1\\[-0.4ex](downstream)}}}
  \put(67,53.1){\makebox(0,0){\small\(nQ=4\)}}
  \put(86,45.9){\makebox(0,0){\small\(Q=4\)}}
  \put(61.2,50.7){\makebox(0,0)[r]{\small\textcolor{RegimeOrange}{\(R_2\)}}}
  \put(92.8,43.5){\makebox(0,0)[l]{\small\textcolor{RegimeBlue}{\(R_1\)}}}
  \put(76.5,42.5){\makebox(0,0){\colorbox{white}{\footnotesize
    \textcolor{RegimePurple}{\(R_2-R_1=6\)}}}}
  \put(53,39.2){\makebox(0,0)[lb]{\footnotesize
    \((R_1,R_2,Q,n)=(2,8,4,1)\)}}
  \put(53,37.2){\parbox[t]{46\unitlength}{\raggedright\footnotesize
    Common lots; higher upstream reorder point.}}

  \put(1,32.0){\makebox(0,0)[lb]{\small\bfseries
    (c) \(n=3,\ R_2>R_1\)}}
  \put(15,26.0){\makebox(0,0)[b]{\footnotesize\bfseries\color{RegimeOrange}
    \shortstack{Stage 2\\[-0.4ex](upstream)}}}
  \put(34,26.0){\makebox(0,0)[b]{\footnotesize\bfseries\color{RegimeBlue}
    \shortstack{Stage 1\\[-0.4ex](downstream)}}}
  \put(15,16.03){\makebox(0,0){\small\(nQ=18\)}}
  \put(34,9.25){\makebox(0,0){\small\(Q=6\)}}
  \put(9.2,10.48){\makebox(0,0)[r]{\small\textcolor{RegimeOrange}{\(R_2\)}}}
  \put(40.8,7.40){\makebox(0,0)[l]{\small\textcolor{RegimeBlue}{\(R_1\)}}}
  \put(17,8.94){\makebox(0,0){\colorbox{white}{\footnotesize
    \textcolor{RegimePurple}{\(R_2-R_1=5\)}}}}
  \put(1,3.2){\makebox(0,0)[lb]{\footnotesize
    \((R_1,R_2,Q,n)=(0,5,6,3)\)}}
  \put(1,1.2){\parbox[t]{46\unitlength}{\raggedright\footnotesize
    Three downstream lots per upstream order.}}

  \put(53,32.0){\makebox(0,0)[lb]{\small\bfseries
    (d) \(n=3,\ R_2<R_1\)}}
  \put(67,26.0){\makebox(0,0)[b]{\footnotesize\bfseries\color{RegimeOrange}
    \shortstack{Stage 2\\[-0.4ex](upstream)}}}
  \put(86,26.0){\makebox(0,0)[b]{\footnotesize\bfseries\color{RegimeBlue}
    \shortstack{Stage 1\\[-0.4ex](downstream)}}}
  \put(67,13.20){\makebox(0,0){\small\(nQ=6\)}}
  \put(86,15.05){\makebox(0,0){\small\(Q=2\)}}
  \put(61.2,7.65){\makebox(0,0)[r]{\small\textcolor{RegimeOrange}{\(R_2\)}}}
  \put(92.8,13.20){\makebox(0,0)[l]{\small\textcolor{RegimeBlue}{\(R_1\)}}}
  \put(76.5,5.9){\makebox(0,0){\colorbox{white}{\footnotesize
    \textcolor{RegimePurple}{\(R_2-R_1=-3\)}}}}
  \put(53,3.2){\makebox(0,0)[lb]{\footnotesize
    \((R_1,R_2,Q,n)=(5,2,2,3)\)}}
  \put(53,1.2){\parbox[t]{46\unitlength}{\raggedright\footnotesize
    Three lots; lower upstream reorder point.}}
\end{picture}}
\endgroup
\caption{Common and consolidated lots with different reorder-point alignments.}
\label{fig:rnq-regimes}
\begin{minipage}{0.98\linewidth}
\SingleSpaced\footnotesize\emph{Notes.} Columns show nominal echelon inventory-position ranges above the reorder points. Their heights are \(Q\) downstream and \(nQ\) upstream; dashed lines divide an upstream lot into downstream lots. Panels (a) and (b) share a vertical scale; the other panels use separate scales. Panels (a)--(d) show the best-found policies for environments A--D in Table~\ref{tab:configurations}.
\end{minipage}
\end{figure}

For example, choosing \(Q=4\) and \(n=3\) makes each upstream order provide three downstream lots. The center can send one lot when the first request is fillable and retain the other two lots for later requests. Raising the common endpoint adds protection against customer demand. Increasing \(\ell\), holding the downstream endpoint \(s\) fixed, brings more material into the upstream echelon relative to the downstream request schedule. The latter can reduce requests waiting for material, but increases stock held upstream. Neither adjustment changes the setup rates under lot accounting. Under shipment accounting, timing can change how many complete lots share a dispatch charge.

\subsection{Averaging the Positions Within a Replenishment Cycle}

The exact cost calculation can be understood by observing a system at a randomly selected stationary time. Unit Poisson demand moves the position through the \(Q\) points of a downstream cycle, with each point receiving the same weight. Likewise, an upstream order contains \(n\) equally weighted component lots. To express these averages, we imagine selecting one of the \(Q\) downstream positions and one of the \(n\) upstream component lots independently, with each position and each lot equally likely. The operating policy remains deterministic.

Let \(D_i\) be Poisson demand during lead time \(L_i\), with \(D_1,D_2\) independent. They represent demand over disjoint time intervals in the cost calculation. This does not assume that lead-time demands observed at successive decision epochs are independent. For nonnegative rates \(a,b\), define \(\ph_{a,b}(x)=ax^++bx^-\), where \(x^+=\max(x,0)\) and \(x^-=\max(-x,0)\). The downstream cost function is
\begin{equation}
G_0(x)=H_0+\E\ph_{h_1+h_2,p}(x-D_1),\qquad H_0=h_2\lambda L_1.
\label{eq:G0-main}
\end{equation}
The constant \(H_0=h_2\lambda L_1\) is the holding cost per unit of time for the average \(\lambda L_1\) units in transit downstream. The remaining term captures the expected holding cost of stock left over and the backlog cost of unmet demand after the lead time \(L_1\).

For a function \(F\) and \(j\in\mathbb N=\{1,2,\ldots\}\), let
\begin{equation}
\mathcal L_j F(x)=\frac1j\sum_{v=0}^{j-1}F(x-v),\qquad
\mathcal H_j(F)=\inf_{x\in\mathbb R}\mathcal L_jF(x).
\label{eq:lattice-window-main}
\end{equation}
Thus, \(\mathcal L_jF\) is the average over a block of \(j\) consecutive inventory positions; \(\mathcal H_j(F)\) places that block as cheaply as possible. With integer demand, the convex functions used here are linear between consecutive integer points. Their minima have integer placements whenever attained, so the calculation keeps the physical integer controls.

For downstream endpoint \(s\), difference \(\ell\), and lot sizes \(Q,nQ\), the inventory and backlog cost is
\begin{equation}
I_{n,Q}(s,\ell)=\frac1n\sum_{j=0}^{n-1}\E\left[
\mathcal L_QG_0\bigl(s-(\ell+jQ-D_2)^-\bigr)
+h_2(\ell+jQ-D_2)^+\right].
\label{eq:rnq-account-main}
\end{equation}
The expression has two operational cases. If enough material is available upstream to fill the request, the positive part charges the units waiting upstream. Otherwise, the negative part shifts the entire downstream cycle to lower positions, accounting for a request that must wait. Thus, the formula accounts for material delays in filling requests.

\begin{proposition}[{\sc Exact Cost of the Classical Rule}]\label{prop:rnq-account-main}
For \(\lambda>0\) and integer controls \(R_1,R_2,Q,n\), with \(Q,n\ge1\), the classical rule with canonical initialization is feasible, with
\begin{equation}
\Clot(\Pi)=\frac{k_1}{Q}+\frac{k_2}{nQ}+I_{n,Q}(R_1+Q,R_2-R_1),
\qquad \Csh(\Pi)\le\Clot(\Pi).
\label{eq:rnq-exact-main}
\end{equation}
\end{proposition}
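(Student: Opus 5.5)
We would follow the classical Chen--Zheng evaluation of echelon \((R,nQ)\) policies and adapt it to the canonical integer initialization. First we construct the rule from cumulative demand counters and verify feasibility. Let \(N(t)\) be cumulative demand. Stage~2 places an order of \(nQ\) each time its nominal echelon position \(R_2+nQ-\) (demand count modulo \(nQ\)) would drop to \(R_2\); equivalently, its nominal position is \(R_2+1,\dots,R_2+nQ\) cyclically. Stage~1's nominal position likewise cycles through \(R_1+1,\dots,R_1+Q\). Because both counters are driven by the same \(N(t)\) and \(nQ\) is a multiple of \(Q\), every upstream order contains exactly \(n\) downstream lots. Complete-lot transfers therefore never require splitting. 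Shipments use only material that has already arrived, by construction of the waiting rule. We check admissibility (Appendix~\ref{app:admissibility}): non-anticipation, finite-horizon finite costs, flow balance at rate \(\lambda\), and negligible terminal effects. These hold because the positions are bounded and backlog has finite moments under Poisson demand.

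Next comes the cost identity. The setup part is immediate. Lots at Stage~1 are requested once per \(Q\) demands and at Stage~2 once per \(nQ\) demands, so by the renewal reward theorem, \(\Clot\) contains \(\lambda K_1/Q+\lambda K_2/(nQ)=k_1/Q+k_2/(nQ)\). For inventory, we fix a time \(t\) and look back. Stage-2 echelon inventory satisfies \(\mathit{IL}_2(t+L_2)=\mathit{IP}_2(t)-D_2\) in the standard way. Stage 1's actual position at \(t\) is \(\min(\mathit{IP}_1^{\rm nom}(t),\ \text{complete-lot-truncated }\mathit{IL}_2(t))\), and then \(\mathit{IL}_1(t+L_1)=\mathit{IP}_1(t)-D_1\).

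The key step is to show the shortfall structure in the integer-ratio setting. We write the nominal Stage-2 position as \(s+\ell+jQ+\) (offset within a downstream lot), where \(j\in\{0,\dots,n-1\}\) indexes the component lot and the downstream offset \(v\in\{0,\dots,Q-1\}\) is shared. Then the available material exceeds the downstream nominal position by exactly \(\ell+jQ-D_2\) in lot-aligned units. The actual downstream position becomes \(s-v-(\ell+jQ-D_2)^-\). Excess upstream echelon stock not yet released is \((\ell+jQ-D_2)^+\), charged at \(h_2\).

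By PASTA and stationarity of the Markov chain \((v,j)\), which is uniform on \(Q\times n\) positions, we obtain several pieces. The \(h_1+h_2\) and \(p\) terms yield \(\E\ph_{h_1+h_2,p}(\cdot-D_1)\) averaged over \(v\), which is \(\mathcal L_Q\). Pipeline holding gives \(h_2\lambda L_1=H_0\). The upstream term is \(h_2(\ell+jQ-D_2)^+\). The independence of \(D_1,D_2\) comes from disjoint intervals \([t-L_2-L_1,t-L_1)\) and \([t-L_1,t)\). Combining these gives \(I_{n,Q}(R_1+Q,R_2-R_1)\). Finally, \(\Csh\le\Clot\) follows from \eqref{eq:cost-order}.

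The main obstacle is the lot-alignment step. We must show that, with complete-lot transfers, the shortfall shifts the \emph{entire} downstream cycle by \((\ell+jQ-D_2)^-\) rather than by a rounded amount. We also need that this is exact under the canonical initialization, including when \(L_1\) or \(L_2\) is zero with simultaneous events. We would handle this by proving, via the counter construction, that available upstream net inventory minus pending downstream requests is always congruent to a multiple of \(Q\) relative to the downstream position. Truncation to complete lots then introduces no extra rounding.
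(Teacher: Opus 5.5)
Your overall structure (counter construction, setup rates from demand balance, lead-time shifts, averaging over cycle positions) matches the paper's, and the formula you end with is correct. There is, however, a genuine gap: the lot-alignment step that makes the identity exact rests on pointwise identities that are false. Under the canonical initialization, the excess of available Stage-2 material over the nominal downstream position is \(\mathit{IL}_2(t)-\widehat{\mathit{IP}}_1(t)=Q(\mathsf S-A)\). This is always a multiple of \(Q\), so it cannot equal \(\ell+jQ-D_2\).

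The error comes from the ``shared offset.'' Read \(j\) and the within-lot offset \(u\) off the Stage-2 position at \(t-L_2\), so that \(\mathit{IL}_2(t)=s+w-u\) with \(w=\ell+jQ-D_2\), and write \(\widehat{\mathit{IP}}_1(t)=s-v\). Then \(u=(v+w)\bmod Q\), not \(u=v\). Pointwise one gets \(\mathit{IP}_1=s-v+Q\min\{\lfloor (v+w)/Q\rfloor,0\}\) and upstream stock \(Q\max\{\lfloor (v+w)/Q\rfloor,0\}\). So the complete-lot rule \emph{does} round both quantities to multiples of \(Q\).

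Two instances with \(Q=2\), \(n=1\) (so \(j=0\)) show this:
\begin{enumerate}
\item With \(\ell=1\), \(D_2=0\), the upstream stock is \(0\) or \(2\) according to \(v\), never your constant \(w^+=1\).
\item With \(\ell=0\), \(D_2=1\), the positions \(s-2\) and \(s-1\) occur at \(v=0\) and \(v=1\), the reverse of your \(s-v-w^-\).
\end{enumerate}
So the plan in your last paragraph cannot succeed. The congruence you propose to prove is exactly what forces the rounding, not what rules it out.

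The missing idea is that the identity holds only after averaging over the downstream residue with \((j,D_2)\) fixed.
\begin{enumerate}
\item Conditional on \(D_2\), the pair \((j,u)\) is uniform and independent, because the residue of \(N(t-L_2)\) modulo \(nQ\) is uniform and independent of the later increment.
\item For fixed \((j,D_2)\), the map \(u\mapsto v\) is a bijection, so \((j,v)\) is also uniform and independent.
\item If \(w<0\), then \(\mathit{IP}_1=\mathit{IL}_2=s+w-((v+w)\bmod Q)\). Since \(v\mapsto(v+w)\bmod Q\) permutes the residues, \(\mathit{IP}_1\) is uniform on \(\{s+w-Q+1,\ldots,s+w\}\), giving \(\mathcal L_QG_0(s+w)\).
\item If \(w\ge0\), then \(\mathit{IP}_1=s-v\), and \(Q\lfloor(v+w)/Q\rfloor=v+w-((v+w)\bmod Q)\) has mean \(w\), because \(v\) and \((v+w)\bmod Q\) are both uniform.
\end{enumerate}
The cost is \(G_0(\mathit{IP}_1)\) plus \(h_2\) times the upstream stock, linear in the latter. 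These conditional marginals therefore suffice to produce \(I_{n,Q}\), and this is the paper's proof.

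A smaller point: lot accounting charges dispatched lots, not requested ones. You also need that the pending-request count \(A-M\) is stationary with finite mean, so that matched lots have the same rate \(\lambda/Q\) as requests.
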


The two setup terms follow from demand balance: every \(Q\) units require a downstream lot, and every \(nQ\) units require an upstream lot. The inventory term averages the corresponding cycle positions, then demand during each transportation lead time. A proof from the physical shipment counters is in Appendix~\ref{app:policy-cost}. The proposition expresses the total expected cost rate as the sum of setup, holding, and backlog costs.

An alternative form helps choose reorder points. Let \(V_Q\) be uniform on \(0,\ldots,Q-1\), and let \(J_n\) be uniform on \(0,\ldots,n-1\), independently of each other and of demand. Define
\begin{equation}
D_{\rm eff}=D_1+V_Q+(D_2-\ell-QJ_n)^+.
\label{eq:effective-demand}
\end{equation}
Then \eqref{eq:rnq-account-main} can be written as
\begin{equation}
I_{n,Q}(s,\ell)=H_0+h_2\E(\ell+QJ_n-D_2)^+
+\E\ph_{h_1+h_2,p}(s-D_{\rm eff}).
\label{eq:rnq-intuition}
\end{equation}
Effective demand includes customer demand \(D_1\), the position within the downstream cycle \(V_Q\), and material unavailable upstream \((D_2-\ell-QJ_n)^+\). At fixed \(Q,n,\ell\), only the last term depends on \(s\). The cheapest endpoint balances the probability of excess stock against the probability of backlog. For positive holding and backlog rates, it is an integer quantile at probability \(p/(h_1+h_2+p)\). This reduces the choice of \(s\) to a simple one-dimensional calculation.

Larger \(n\) spreads each upstream setup over more downstream lots, but may increase inventory waiting upstream. Adjusting the reorder-point difference \(\ell\) changes how early that material arrives relative to those requests. Choosing \(n\) only from an upstream setup calculation perspective would miss this adjustment. Conversely, a common lot, \(n=1\), eliminates partially used upstream orders and may be attractive when upstream setups are cheap. The theorem will guarantee that some coordinated choice is affordable without requiring common lots \(n=1\) in every environment.
\section{A Combined Lower Bound on the Optimal System Cost}\label{sec:benchmark}

To prove the performance guarantee, we combine two lower bounds on every feasible policy's cost. One allocates holding and backlog costs between two single-stage problems with setup charges \(K_1\) and \(K_2\); the other gives the minimum inventory cost when setup charges are zero. Rescaling the setup charges allows different shares of inventory cost to enter these two comparisons. Two choices of allocation and scale connect the resulting bound to the joint inventory calculation used to construct a classical policy.

\subsection{The Cost of Operating Without Setup Charges}

Imagine first that both fixed charges are zero. The manager can order and transfer individual units whenever doing so helps. Demand uncertainty and transportation time still create holding and backlog costs. Let \(V_0\) be the optimal cost of this system. With \(D_i\) denoting independent Poisson demand over lead time \(L_i\), its expression is
\begin{equation}
 V_0=\inf_{r,y\in\mathbb R}\E\left[G_0\bigl(r-(y-D_2)^-\bigr)+h_2(y-D_2)^+\right].
 \label{eq:lb-protected-zero-value}
\end{equation}
Here \(r\) is a downstream target and \(r+y\) is the upstream echelon target. When \(D_2\le y\), enough material is available to replenish the downstream position to its target \(r\), leaving \(y-D_2\) units upstream. When supply is insufficient, the downstream position falls by the shortage \((y-D_2)^-\). The two terms, therefore, evaluate the same realized availability situation.

The infimum in \eqref{eq:lb-protected-zero-value} is attained by integer controls in the regular positive-cost case. At a zero-cost boundary, it can be approached with finite integer controls. More importantly for the comparison, the inventory and backlog component of every admissible policy has a long-run lower limiting cost of at least \(V_0\). Appendix~\ref{app:zero-setup} proves this directly from the material restriction and two lead-time identities. It establishes the comparison on finite horizons before taking a long-run limit, including cases in which separate cost components do not converge.

\subsection{Dividing Cost Without Counting It Twice}

The second benchmark divides the physical holding and backlog charges between two single-stage problems. Let \(\eta\in[0,1]\) be the share of upstream echelon holding assigned to the first problem, and let \(\zeta\in[0,1]\) be its share of backlog cost. The remaining shares go to the second problem. Each problem retains its own stage's fixed setup charge. Define
\begin{align}
 F_1^{\eta,\zeta}(x)&=\E\phi_{h_1+\eta h_2,\zeta p}(x-D_1),\label{eq:allocation-loss-first}\\
 F_2^{\eta,\zeta}(x)&=\E\phi_{(1-\eta)h_2,(1-\zeta)p}(x-D_1-D_2).
 \label{eq:allocation-losses}
\end{align}
The longer demand window \(L_1+L_2\) in the second problem reflects the time needed to reach the customer from the external supplier. The losses in \eqref{eq:allocation-loss-first} and \eqref{eq:allocation-losses} are obtained by applying material feasibility to the allocated physical charges. Each single-stage problem includes only its allocated holding and backlog costs and its own stage's setup charge.

For a loss function \(F\), write
\begin{equation}
 \mathcal H_j(F)=\inf_{s\in\mathbb R}\frac1j\sum_{v=0}^{j-1}F(s-v),\qquad
 \mathcal C_F(k)=\inf_{j\in\mathbb N}\{k/j+\mathcal H_j(F)\}.
 \label{eq:cycle-cost-main}
\end{equation}
The first expression places a cycle of \(j\) consecutive inventory positions. The second chooses its integer size and pays the corresponding setup rate. Poisson arrivals see the same expected state cost as a randomly selected time. Expected inventory cost can, therefore, be written as a sum of costs evaluated at the inventory levels immediately before each demand arrival, divided by the demand rate. The position path splits this sum into completed cycles plus a negligible unfinished part. For each completed cycle of \(j\) demand arrivals, the costs evaluated at those arrivals plus the scaled setup charge \(k=\lambda K\) total at least \(j\mathcal C_F(k)\). This makes \(\mathcal C_F\) a lower bound for arbitrary admissible shipment decisions, including decisions with variable quantities (see Appendix~\ref{app:lower-bounds}). 

The optimized allocation bound is
\begin{equation}
 \mathcal A(k_1,k_2)=\sup_{0\le\eta,\zeta\le1}
 \{\eta H_0+\mathcal C_{F_1^{\eta,\zeta}}(k_1)+\mathcal C_{F_2^{\eta,\zeta}}(k_2)\},
 \qquad H_0=h_2\lambda L_1.
 \label{eq:allocation-bound}
\end{equation}
Every choice of the two shares, \(\eta\) and \(\zeta\), provides a valid lower bound. Taking their supremum strengthens the comparison. The constant \(H_0\) represents the upstream echelon holding associated with material traveling downstream at the demand rate. Assigning each physical charge only once avoids double counting when the two single-stage costs are added.

\subsection{Combining the Allocation and Zero-Setup Bounds}

On any finite horizon, let a policy have expected average inventory cost \(I\) and expected average setup cost \(\bar S\). For any \(0<u\le1\), its total cost can be written as
\[
 I+\bar S=(1-u)I+u(I+\bar S/u).
\]
This identity reserves a fraction \(1-u\) of inventory cost for the zero-setup comparison. The remaining fraction sees setup charges increased by \(1/u\), so that the original total setup cost is preserved. Applying the two lower bounds to the same physical policy gives
\begin{equation}
 \LBsp=\sup_{0<u\le1}\{(1-u)V_0+u\mathcal A(k_1/u,k_2/u)\}
 \le\OPTsh\le\OPTlot.
 \label{eq:sp-bound-main}
\end{equation}
The superscript SP refers to preserving the setup-cost contribution during this rescaling. The finite-horizon identity validates it under Section~\ref{sec:model}'s admissibility conditions.

The uniform guarantee theorem uses only \(u=1\) and \(u=1/2\) in \eqref{eq:sp-bound-main}, with equal allocation shares \(\eta=\zeta=1/2\) in \eqref{eq:allocation-loss-first} and \eqref{eq:allocation-losses}. With equal allocation, we obtain simpler lower bounds by replacing downstream lead-time demand \(D_1\) in the second problem by its mean. Optimizing the inventory level absorbs this constant shift, leaving only \(D_2\) in that problem's loss function. Define the downstream and upstream single-stage benchmark costs
\begin{equation}
 B_1(k)=H_0+\mathcal C_{\E\phi_{h_1+h_2,p}(x-D_1)}(k),\qquad B_2(k)=\mathcal C_{\E\phi_{h_2,p}(x-D_2)}(k).
 \label{eq:BT-main}
\end{equation}
These quantities retain the full relevant holding and backlog rates at each stage. They enter through equal allocation of upstream holding and backlog charges, together with a comparison of demand windows proved in Appendix~\ref{app:joint-cycles}.

\begin{proposition}[{\sc Two Sufficient Lower Bounds}]\label{prop:two-scale-lower}
For every instance,
\begin{equation}
 L_{\rm two}=\max\left\{\frac{B_1(2k_1)+B_2(2k_2)}2,\quad
 \frac{V_0}2+\frac{B_1(4k_1)+B_2(4k_2)}4\right\}
 \le\LBsp\le\OPTsh.
 \label{eq:two-scale-lower}
\end{equation}
\end{proposition}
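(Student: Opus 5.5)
Since \(\LBsp\le\OPTsh\) is already given by \eqref{eq:sp-bound-main}, it suffices to show that each term of the maximum in \eqref{eq:two-scale-lower} is at most \(\LBsp\). The plan evaluates the supremum in \(\LBsp\) at two points. The first is \(u=1\), which drops the \(V_0\) term and gives \(\mathcal A(k_1,k_2)\). The second is \(u=1/2\), which gives \(V_0/2+\mathcal A(2k_1,2k_2)/2\). In both cases we take the equal shares \(\eta=\zeta=1/2\). The task then reduces to a single inequality, valid for every \(k_1,k_2\ge0\):
\begin{equation*}
\tfrac12H_0+\mathcal C_{F_1^{1/2,1/2}}(k_1)+\mathcal C_{F_2^{1/2,1/2}}(k_2)\ \ge\ \tfrac12\bigl(B_1(2k_1)+B_2(2k_2)\bigr).
\end{equation*}
Applied at \((k_1,k_2)\) it gives the first term of the maximum. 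Applied at \((2k_1,2k_2)\) and halved, it gives the second term, because \(\tfrac12\cdot\tfrac12(B_1(4k_1)+B_2(4k_2))=(B_1(4k_1)+B_2(4k_2))/4\).

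To prove this inequality, I would use positive homogeneity of the cost structure. The function \(\phi_{a,b}\) is linear in \((a,b)\), so \(F_1^{1/2,1/2}(x)=\E\phi_{h_1+h_2/2,\,p/2}(x-D_1)\). Because \(h_1\ge0\), this dominates \(\tfrac12\E\phi_{h_1+h_2,p}(x-D_1)\) pointwise: the holding coefficient satisfies \(h_1+h_2/2\ge(h_1+h_2)/2\), and the backlog coefficients are equal. The operators \(\mathcal H_j\) and \(\mathcal C\) are monotone in \(F\). They are also homogeneous: \(\mathcal C_{cF}(ck)=c\,\mathcal C_F(k)\) for \(c>0\), since \(k/j+\mathcal H_j(cF)=c\,(k/c)/j+c\,\mathcal H_j(F)\). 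Taking \(c=1/2\) yields \(\mathcal C_{F_1^{1/2,1/2}}(k_1)\ge\tfrac12\mathcal C_{\E\phi_{h_1+h_2,p}(\cdot-D_1)}(2k_1)\). Adding \(\tfrac12H_0\) then gives exactly \(\tfrac12B_1(2k_1)\).

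For the second problem, \(F_2^{1/2,1/2}(x)=\tfrac12\E\phi_{h_2,p}(x-D_1-D_2)\). The same homogeneity reduces the claim to \(\mathcal C_{\E\phi_{h_2,p}(\cdot-D_1-D_2)}(2k_2)\ge\mathcal C_{\E\phi_{h_2,p}(\cdot-D_2)}(2k_2)\). This is the demand-window comparison. Since \(D_1\) is independent of \(D_2\) and \(\phi\) is convex, Jensen's inequality applied conditionally on \(D_2\) gives \(\E\phi(x-D_1-D_2)\ge\E\phi(x-\lambda L_1-D_2)\). This holds for every \(x\), and hence for the window averages \(\mathcal L_j\). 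The shift by \(\lambda L_1\) is absorbed by the infimum over the placement \(s\in\mathbb R\) in \(\mathcal H_j\), because that infimum is taken over all reals and not only integers. Therefore \(\mathcal H_j\) of the longer-window loss is at least \(\mathcal H_j\) of the \(D_2\)-only loss for each \(j\), and taking the infimum over \(j\) gives the claim.

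The main point needing care is this last step. Replacing \(D_1\) by its mean is legitimate only because \(s\) ranges over all of \(\mathbb R\) in \eqref{eq:cycle-cost-main}. The paper's remark that minima have integer placements concerns attainment, not the definition, so the definition still allows a real shift. Two further checks remain. The boundary case \(\lambda=0\) is trivial, since then \(k_i=0\) and the losses are constant. The inequality must also hold when \(p=0\) or \(h_2=0\), in which case some infima may be \(-\infty\)-free limits rather than attained; only monotonicity and homogeneity are used, so this causes no difficulty. Combining the two evaluations of \(\LBsp\) with the bound \(\LBsp\le\OPTsh\) from \eqref{eq:sp-bound-main} completes the proof.
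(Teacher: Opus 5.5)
Your proposal is correct and follows the paper's proof step for step. You use equal shares \(\eta=\zeta=1/2\), the pointwise bound \(F_1^{1/2,1/2}\ge\tfrac12\E\phi_{h_1+h_2,p}(\cdot-D_1)\), conditional Jensen on \(D_1\) with the real-valued placement absorbing the shift \(\E D_1\), the homogeneity \(\mathcal C_{cF}(k)=c\,\mathcal C_F(k/c)\), and evaluation of \(\LBsp\) at \(u=1\) and \(u=1/2\). One harmless slip: at \(\lambda=0\) the losses are not constant (with \(D_i\equiv0\) they reduce to \(\phi\) itself), but your argument never uses \(\lambda>0\), so no separate case is needed.
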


\proof{Proof.} Set both allocation shares to one half. The Stage-1 loss is at least \((1/2)\E\phi_{h_1+h_2,p}(x-D_1)\). Replacing the independent downstream lead-time demand \(D_1\) in the Stage-2 loss by its mean gives at least \((1/2)\E\phi_{h_2,p}(x-\E D_1-D_2)\) by the convexity of \(\phi_{h_2,p}\). The optimizing cycle placement absorbs the shift \(\E D_1\). Including \(H_0/2\) gives \(\mathcal A(k_1,k_2)\ge\{B_1(2k_1)+B_2(2k_2)\}/2\), as detailed in \eqref{appjc:equal-allocation}. This calculation uses only the convexity of the loss functions, nonnegative cost coefficients, and infima, so it also holds when a coefficient is zero. Now use \(u=1\) and \(u=1/2\) in \eqref{eq:sp-bound-main}, respectively:
\[
 \thickmuskip=2mu \LBsp\ge\mathcal A(k_1,k_2)\ge\tfrac12\{B_1(2k_1)+B_2(2k_2)\},\enspace \LBsp\ge\tfrac12 V_0+\tfrac12\mathcal A(2k_1,2k_2)\ge\tfrac12 V_0+\tfrac14\{B_1(4k_1)+B_2(4k_2)\}.
\]
Their maximum is \(L_{\rm two}\), and Appendix~\ref{app:zero-setup} proves \(\LBsp\le\OPTsh\).\Halmos\endproof

The first bound in the composite bound \(L_{\rm two}\) uses all cost in the allocation comparison. The second reserves half the inventory cost for \(V_0\). Their usefulness depends on how high the zero-setup cost is relative to the cost of replenishment cycles. When evaluating a specific policy, one can also optimize \(u,\eta,\zeta\) to obtain a stronger numerical benchmark. We establish the uniform factor using only the two displayed candidates, without solving that additional optimization.
\section{A \texorpdfstring{$5/3$}{5/3} Guarantee for the Classical Policy}\label{sec:guarantee}

\begin{theorem}[{\sc The $5/3$ Guarantee}]\label{thm:five-thirds}
For unit Poisson demand with rate \(\lambda\ge0\), finite deterministic lead times, and all finite nonnegative cost rates,
\begin{equation}
 \inf_{\Pi\in\mathcal P_{\RnQ}}\Clot(\Pi)
 \le\frac53 L_{\rm two}\le\frac53\LBsp\le\frac53\OPTsh\le\frac53\OPTlot.
 \label{eq:main-five-thirds}
\end{equation}
The same guarantee holds with \(\Clot\) replaced by \(\Csh\). If \(\lambda,h_1,h_2,p>0\), there is a finite deterministic classical policy with \(\Clot\le(5/3)L_{\rm two}\). At other nonnegative boundaries, finite deterministic classical policies approach the stated bound arbitrarily closely.
\end{theorem}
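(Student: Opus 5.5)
The plan is to insert an intermediate quantity between the classical policies and $L_{\rm two}$. For positive integers $q_1,q_2$, I would define a \emph{decoupled} inventory calculation $J(q_1,q_2)$. It is the analogue of \eqref{eq:rnq-account-main} with the upstream cycle of $q_2$ positions chosen independently of the downstream lot $q_1$: an upstream position $w\in\{0,\dots,q_2-1\}$ replaces $\ell+jQ$, and we average over $w$. Set
\[
U=\inf_{q_1,q_2\in\mathbb N}\Bigl\{\frac{k_1}{q_1}+\frac{k_2}{q_2}+\inf_{s,\ell}J_{q_1,q_2}(s,\ell)\Bigr\}.
\]
The theorem then reduces to two inequalities: (A) $\inf_{\Pi\in\mathcal P_{\RnQ}}\Clot(\Pi)\le U$, and (B) $U\le\frac53 L_{\rm two}$. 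The remaining inequalities in \eqref{eq:main-five-thirds} are Proposition~\ref{prop:two-scale-lower}, \eqref{eq:sp-bound-main} and \eqref{eq:cost-order}. The statement for $\Csh$ follows from $\Csh\le\Clot$ in Proposition~\ref{prop:rnq-account-main}.

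For (A), I would fix $(q_1,q_2)$ and a minimizing $(s,\ell)$. I would then partition the upstream positions $w$ by their residue $w\equiv r\pmod{q_1}$. For each residue class, I would form a classical policy with $Q=q_1$, $n=\lceil q_2/q_1\rceil$ or $\lfloor q_2/q_1\rfloor$ (depending on how many members the class has), and $\ell$ shifted by $r$. By \eqref{eq:rnq-account-main}, the inventory cost of that policy is exactly the average of the decoupled integrand over that class.

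Weight each class by its share of the $q_2$ positions. The weighted average of the inventory terms is then $J$. The weighted average of $k_2/(nQ)$ should equal, or be bounded by, $k_2/q_2$: a class with $m$ members corresponds to upstream lots of $mq_1$ units and frequency weight $m/q_2$, so the average is $\sum_r (m_r/q_2)\,k_2/(m_rq_1)\le k_2/q_2$ once $q_1\le q_2$. The case $q_2<q_1$ should be handled by taking $n=1$ and showing that shrinking the upstream cycle to $q_1$ does not increase $J$ beyond what the saved setups pay for. I expect monotonicity of the $h_2(\cdot)^+$ term to suffice there, but this needs checking. Since an average of costs bounds its minimum, some deterministic classical policy costs at most the decoupled value.

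For (B), I would use the even/odd splitting. A block of $2j$ consecutive positions splits into its even-indexed and odd-indexed halves, each of which is a block of $j$ positions at spacing two. This lets me compare $J$ at cycle sizes $(q_1,q_2)$ with single-stage costs at roughly half those sizes. I would prove two estimates of the following form, in which $V_0$ enters with opposite signs:
\[
U\le \alpha\,\frac{B_1(2k_1)+B_2(2k_2)}2+\beta V_0,\qquad
U\le \gamma\Bigl(\frac{V_0}2+\frac{B_1(4k_1)+B_2(4k_2)}4\Bigr)-\delta V_0 .
\]
The first would come from choosing $(q_1,q_2)$ as the optimizers of $B_1(2k_1)$ and $B_2(2k_2)$ and bounding the joint shortfall term $(w-D_2)^-$ by the zero-setup structure of \eqref{eq:lb-protected-zero-value}. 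The second would come from the optimizers at the $4k_i$ scale, where halving the cycle lets the even positions reproduce the zero-setup targets. Averaging the two estimates with weights $\delta/(\beta+\delta)$ and $\beta/(\beta+\delta)$ eliminates $V_0$. The target constants are those giving the coefficient $5/3$ on $L_{\rm two}$.

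The main obstacle is these two halving estimates. The joint term couples the two stages through $(w-D_2)^-$, so the downstream block placement must be chosen to absorb the upstream shortage. I would first try to bound $J$ by $H_0$ plus $\mathcal L_{q_1}$ of the downstream loss plus $\mathcal L_{q_2}$ of $\E\phi_{h_2,p}(\cdot-D_2)$, using convexity of $\phi$ and the subadditivity $\phi(a-b^-)\le\phi(a)+p\,b^-$. I would then spend the excess against $V_0$.

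Finally, I would treat the boundary cases. When $\lambda=0$, all costs are zero. When some of $h_1,h_2,p$ vanish, the infima in $\mathcal H_j$ and $\mathcal C_F$ may not be attained, so I would take near-optimal integer controls and let the tolerance go to zero; this yields the "arbitrarily close" clause. When all rates are positive, the convex piecewise-linear losses attain their minima at integers, so the construction gives a finite deterministic policy.
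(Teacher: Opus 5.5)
Your reduction and part (A) are sound and follow the paper. The quantity $\inf_{s,\ell}J_{q_1,q_2}(s,\ell)$ is the paper's joint value $V_{q_1,q_2}$, so $U=\mathcal J(k_1,k_2)$, and your residue classes are exactly its remainder groups. The case $q_2<q_1$ needs no monotonicity argument. Every nonempty class is then a singleton; take $n$ equal to the class size, not $\lfloor q_2/q_1\rfloor$, which can vanish. The same count gives weighted upstream setup $\min\{q_1,q_2\}\,k_2/(q_1q_2)\le k_2/q_2$ in every case.

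The genuine gap is part (B). You leave both the constants and the mechanism open. The mechanism you sketch is: bound the joint value by stagewise terms, compare with single-stage costs at half the cycle sizes, then ``spend the excess against $V_0$''. This cannot produce the required $V_0$ terms. Halving a single-stage cycle gives $\mathcal C_F(4k)\ge2\mathcal C_F(k)-\inf F$. The constant subtracted is therefore $W_0=B_1(0)+B_2(0)$, the sum of single-stage zero-setup values, not the joint value $V_0$. In general only $V_0\le W_0\le 2V_0$ holds. With $W_0$ in place of $V_0$ the two estimates no longer cancel, and this route yields only a factor of $2$, not $5/3$.

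The missing idea is convexity of the \emph{joint} value under simultaneous scaling of both cycle positions, applied before any stagewise reduction.

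\begin{itemize}
\item Let $\mathscr V(\Xi_1,\Xi_2)$ be the relaxed joint problem: the downstream level may be any function of $(D_2,\Xi_2)$ not exceeding the available level $z-D_2-\Xi_2$. The relaxation matters because averages of capped choices are not capped, while the cap is still optimal in the relaxed problem.
\item Averaging feasible pairs at scales $t_0,t_1$ is feasible at the averaged scale. Convexity of $\phi_{h_1+h_2,p}$ then makes $t\mapsto\mathscr V(t\Xi_1,t\Xi_2)$ convex, with value $V_0$ at $t=0$. Hence $\mathscr V(2\Xi_1,2\Xi_2)\ge2\mathscr V(\Xi_1,\Xi_2)-V_0$.
\item Split both cycles into even and odd positions at once, with a separate endpoint for each pair of labels (which can only lower the value). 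This gives $V_{m_1,m_2}\ge2\E V_{S_{m_1},S_{m_2}}-V_0$.
\item Combined with $\E(1/S_m)=2/m$ for $m\ge2$, this yields $\mathcal J(4k)\ge2\mathcal J(k)-V_0$, where $tk=(tk_1,tk_2)$. This compares the joint value with itself, not with single-stage costs.
\item Concavity of $t\mapsto\mathcal J(tk)$ then gives $\mathcal J(2k)\ge\frac43\mathcal J(k)-\frac13V_0$.
\item Only now are single-stage costs introduced, through $\mathcal J(ck)\le B_1(ck_1)+B_2(ck_2)$ for $c=2,4$. This produces your two estimates with $(\alpha,\beta)=(3/2,1/4)$ and $(\gamma,\delta)=(2,1/2)$.
\end{itemize}

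Separately, the attained-policy clause requires the infimum over $(q_1,q_2)$ to be attained. That needs a growth bound such as $V_{m_1,m_2}\ge H_0+\frac14\min\{h_1+h_2,p\}(m_1-1)$ and its upstream analogue. Your sketch addresses only attainment of the placements.
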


The theorem permits any imbalance between the two setup charges, lead times, and holding rates. It also retains exact integer lot sizes. The upper factor $5/3$ is a guarantee on the best classical policy in the stated system. It is not a claim that every choice of the four controls performs well. The construction below provides a feasible choice satisfying the guarantee.

The difficult economic tradeoff is familiar: reducing the frequency of shipments saves setups but adds inventory. The additional challenge in a serial system is that the two stages may prefer different frequencies. Treating their costs separately can charge too much for the same supply shortage. Our argument keeps both stages in one inventory calculation while changing their cycle sizes. Remainder groups then convert this calculation into coordinated lots.

\subsection{Choosing Cycles Before Coordinating Lot Sizes}

Consider an auxiliary calculation with independent positive integer cycle sizes \(m_1\) downstream and \(m_2\) upstream, without requiring an integer ratio. Let \(V_{m_1,m_2}\) be their joint inventory cost, with the downstream position constrained by available upstream material. The two cycle positions are independent uniforms on \(\{0,\ldots,m_1-1\}\) and \(\{0,\ldots,m_2-1\}\). In this static calculation, the availability limit depends on upstream lead-time demand and the upstream cycle position. The downstream position can depend on these quantities, while downstream lead-time demand remains unobserved. Appendix~\ref{app:joint-cycles} defines the calculation and shows that the best downstream choice is the smaller of a fixed target and the resulting upper limit on the downstream inventory position.

The auxiliary calculation minimizes the combined setup and inventory cost over both cycle sizes:
\begin{equation}
 \mathcal J(k_1,k_2)=\inf_{(m_1,m_2)\in\mathbb N^2}
 \{k_1/m_1+k_2/m_2+V_{m_1,m_2}\}.
 \label{eq:joint-main}
\end{equation}
This quantity keeps the two inventory costs together while temporarily allowing independent cycle sizes. Two properties connect it to actual policies and to the lower bounds:
\begin{equation}
 \inf_{\Pi\in\mathcal P_{\RnQ}}\Clot(\Pi)\le\mathcal J(k_1,k_2)
 \le B_1(k_1)+B_2(k_2),\qquad V_0\le\mathcal J(k_1,k_2).
 \label{eq:joint-sandwich-main}
\end{equation}
The first comparison is particularly useful. An auxiliary choice of independent sizes can be converted to a feasible classical policy without increasing its cost. This is possible because the policy can choose its reorder-point difference as well as its lot sizes.

To see the conversion, fix the downstream size \(m_1\). Divide the \(m_2\) upstream positions into groups according to their remainders after division by \(m_1\). For example, \(m_1=3,m_2=8\) produce the groups \(\{0,3,6\}\), \(\{1,4,7\}\), and \(\{2,5\}\), with sizes \(3,3,2\) satisfying \(m_2=3+3+2=8\). Figure~\ref{fig:cycle-groups}(a) illustrates these remainder groups and the resulting policies; Appendix~\ref{app:joint-policy} develops the example in detail. These groups partition the average inventory cost in the auxiliary calculation. For each group, we consider the average inventory cost conditional on the upstream cycle position belonging to that group.

\begin{figure}[!htbp]
\centering
\begingroup\SingleSpaced
\definecolor{GroupingInk}{HTML}{202831}
\definecolor{GroupingGray}{HTML}{525E68}
\definecolor{GroupingLine}{HTML}{D1D6DB}
\definecolor{GroupingBlue}{HTML}{0072B2}
\definecolor{GroupingOrange}{HTML}{C75600}
\definecolor{GroupingGreen}{HTML}{00856A}
\definecolor{GroupingPaleBlue}{HTML}{EAF4FA}
\definecolor{GroupingPaleOrange}{HTML}{FFF1E8}
\definecolor{GroupingPaleGreen}{HTML}{E8F5F0}
\definecolor{GroupingPanel}{HTML}{F4F6F8}
\resizebox{.81\linewidth}{!}{%
\begin{tikzpicture}[x=1bp,y=1bp,every node/.style={inner sep=0pt,outer sep=0pt,anchor=base west,text=GroupingInk,font=\fontsize{8.5}{10}\selectfont},group dot/.style={circle,anchor=center,minimum size=14.4bp,line width=.75bp,font=\fontsize{8}{9}\selectfont},group arrow/.style={draw=GroupingGray,line width=.7bp,-{Stealth[length=4bp,width=3.6bp]}},group rule/.style={draw=GroupingLine,line width=.6bp}]
\path[use as bounding box] (0,0) rectangle (468,298);
\draw[group rule] (237,12) -- (237,292);

\node[font=\bfseries\fontsize{11}{13}\selectfont] at (1,284) {(a) Construct a classical policy};
\node[text=GroupingGray,font=\fontsize{9}{11}\selectfont] at (1,265) {Downstream \(m_1=3\); upstream \(m_2=8\)};
\foreach \groupindex/\groupcolor in {0/Blue,1/Orange,2/Green,3/Blue,4/Orange,5/Green,6/Blue,7/Orange}{
  \node[group dot,draw=Grouping\groupcolor,fill=GroupingPale\groupcolor,text=Grouping\groupcolor] at ({10+29*\groupindex},243) {\(\groupindex\)};
}
\node[font=\fontsize{8.7}{10}\selectfont] at (1,221) {Group by remainder after division by \(3\).};
\draw[group rule] (1,211) -- (225,211);
\node[anchor=base,font=\bfseries\fontsize{8.5}{10}\selectfont] at (39,194) {Group positions};
\node[anchor=base,font=\bfseries\fontsize{8.5}{10}\selectfont] at (146,194) {Lot sizes \((Q_1,Q_2)\)};
\node[anchor=base,font=\bfseries\fontsize{8.5}{10}\selectfont] at (211,194) {Weight};
\foreach \groupy/\groupcolor/\groupstart/\groupsize/\grouplot in {171/Blue/0/3/9,139/Orange/1/3/9,107/Green/2/2/6}{
  \pgfmathtruncatemacro{\grouplast}{\groupsize-1}
  \foreach \groupoffset in {0,...,\grouplast}{
    \pgfmathtruncatemacro{\groupposition}{\groupstart+3*\groupoffset}
    \node[group dot,draw=Grouping\groupcolor,fill=GroupingPale\groupcolor,text=Grouping\groupcolor] at ({12+26*\groupoffset},\groupy) {\(\groupposition\)};
  }
  \draw[group arrow] (83,\groupy) -- (107,\groupy);
  \fill[GroupingPale\groupcolor,rounded corners=3bp] (120,{\groupy-11}) rectangle (172,{\groupy+11});
  \node[anchor=center,text=Grouping\groupcolor,font=\fontsize{10}{12}\selectfont] at (146,\groupy) {\((3,\grouplot)\)};
  \node[anchor=center,font=\fontsize{9.5}{11}\selectfont] at (211,\groupy) {\(\groupsize/8\)};
}
\node[text=GroupingGray,font=\fontsize{8.3}{10}\selectfont] at (1,82) {Each group sets its reorder-point difference.};
\fill[GroupingPanel,rounded corners=4bp] (1,13) rectangle (225,64);
\node[font=\bfseries\fontsize{8.8}{11}\selectfont] at (9,45) {Choose the cheapest candidate policy.};
\node at (9,28) {Weighted average cost does not increase.};

\node[font=\bfseries\fontsize{11}{13}\selectfont] at (249,284) {(b) Compare auxiliary costs};
\node[text=GroupingGray] at (249,269) {Separate even and odd positions.};
\node[text=GroupingGray] at (249,256) {Remove the offset; halve the spacing.};
\node[font=\bfseries\fontsize{9}{11}\selectfont] at (249,243) {Original downstream size \(m_1=3\) (odd)};
\node[font=\bfseries\fontsize{9}{11}\selectfont] at (249,123) {Original upstream size \(m_2=8\) (even)};
\foreach \groupy/\groupcolor/\groupparity/\groupsize/\groupcycle/\groupstage in {220/Blue/0/2/3/1,181/Orange/1/1/3/1,103/Blue/0/4/8/2,64/Orange/1/4/8/2}{
  \ifnum\groupparity=0\def\groupname{Even}\else\def\groupname{Odd}\fi
  \node[anchor=west,align=left,text=Grouping\groupcolor,font=\fontsize{8}{9}\selectfont] at (249,\groupy) {\groupname\\positions};
  \pgfmathtruncatemacro{\grouplast}{\groupsize-1}
  \foreach \groupoffset in {0,...,\grouplast}{
    \pgfmathtruncatemacro{\groupposition}{\groupparity+2*\groupoffset}
    \node[group dot,minimum size=13.6bp,draw=Grouping\groupcolor,fill=GroupingPale\groupcolor,text=Grouping\groupcolor] at ({291+26*\groupoffset},\groupy) {\(\groupposition\)};
    \node[group dot,minimum size=12.4bp,draw=Grouping\groupcolor,fill=GroupingPale\groupcolor,text=Grouping\groupcolor] at ({400+13*\groupoffset},\groupy) {\(\groupoffset\)};
  }
  \draw[group arrow] (380,\groupy) -- (390,\groupy);
  \node[font=\fontsize{8.8}{10}\selectfont] at (291,{\groupy-17}) {\(g=\groupsize:\quad (\groupsize/\groupcycle)(\ifnum\groupsize=1 k_{\groupstage}\else k_{\groupstage}/\groupsize\fi)=k_{\groupstage}/\groupcycle\)};
}
\fill[GroupingPanel,rounded corners=3bp] (249,135) rectangle (467,156);
\node[font=\fontsize{8.7}{10}\selectfont] at (257,142) {Weighted setup: \(k_1/3+k_1/3=2k_1/3\)};
\fill[GroupingPanel,rounded corners=3bp] (249,13) rectangle (467,34);
\node[font=\fontsize{8.7}{10}\selectfont] at (257,20) {Weighted setup: \(k_2/8+k_2/8=k_2/4\)};
\end{tikzpicture}}
\endgroup
\caption{Two uses of the auxiliary cycles \(m_1=3,m_2=8\): (a) constructing a classical policy from remainder groups; (b) comparing auxiliary costs by separating even and odd positions.}
\label{fig:cycle-groups}
\end{figure}

Within each group, consecutive positions differ by exactly \(m_1\). Let \(g\) be the number of positions in a group. This group, therefore, gives a feasible upstream lot of \(gm_1\) units and a downstream lot of \(m_1\) units, corresponding to \(Q_1=m_1\) and policy multiplier \(n=g\). We choose each policy's reorder-point difference so that its average inventory cost matches the conditional average for its group in the auxiliary calculation. Weight the resulting policies by their group sizes. Their weighted average inventory cost equals \(V_{m_1,m_2}\), the original joint inventory cost. The policy associated with a group of \(g\) positions orders upstream lots of \(gm_1\) units, giving an upstream setup cost rate of \(k_2/(gm_1)\). Its weight \(g/m_2\), therefore, gives a contribution of \(k_2/(m_1m_2)\) to the weighted average. With at most \(m_1\) nonempty groups, the weighted average upstream setup cost is at most \(m_1\cdot k_2/(m_1m_2)=k_2/m_2\). Every constructed policy retains the downstream lot size \(m_1\), so its downstream setup cost remains \(k_1/m_1\). Adding the inventory and both setup costs gives a weighted average total cost of at most \(k_1/m_1+k_2/m_2+V_{m_1,m_2}\), the cost of the auxiliary calculation for these cycle sizes. The cheapest classical policy constructed from the remainder groups costs no more than this weighted average. In Figure~\ref{fig:cycle-groups}(a), its lot sizes are, therefore, \((Q_1,Q_2)=(3,9)\) or \((3,6)\). The two candidates with lot sizes \((3,9)\) have different reorder-point differences and can have different costs.

\FloatBarrier

\subsection{Comparing Small and Large Cycles Together}

Optimizing the auxiliary cycle sizes gives \(\mathcal J(k_1,k_2)\) as an upper bound on the best classical policy cost, i.e., the first comparison in \eqref{eq:joint-sandwich-main}. We now compare this auxiliary value \(\mathcal J(k_1,k_2)\) across setup scales to connect it to \(L_{\rm two}\). Figure~\ref{fig:cycle-groups}(b) starts again from the original auxiliary sizes \(m_1=3,m_2=8\), forming smaller auxiliary pairs \((2,4)\) and \((1,4)\). Appendix~\ref{app:joint-doubling} develops this comparison in detail, including the case of a cycle of size one.

For a single realized inventory shortage or surplus, the holding-and-backlog loss is convex: the cost of an intermediate inventory position cannot exceed the corresponding average cost of two positions. If each downstream position is no greater than its corresponding upstream availability limit, averaging both sides preserves this inequality. These facts imply the convexity of the joint inventory value when the two cycle positions are scaled together by a common nonnegative factor. Halving both cycle positions, therefore, gives a joint inventory cost at most the average of the original cost and \(V_0\). Rearranging gives the inventory comparison underlying
\begin{equation}
 \mathcal J(4k_1,4k_2)\ge2\mathcal J(k_1,k_2)-V_0.
 \label{eq:joint-four-main}
\end{equation}

We explain \eqref{eq:joint-four-main} in the following three paragraphs. To apply this comparison with integer cycle sizes, separate each cycle's even-numbered and odd-numbered positions into two sets. Within each set, subtract the first position and divide by two to obtain consecutive integer positions. Apply this separation at both stages and weight each set by its share of the original positions. Figure~\ref{fig:cycle-groups}(b) uses the auxiliary sizes \(m_1=3,m_2=8\). The downstream size \(m_1=3\) illustrates the odd case: the even positions \(\{0,2\}\) give a set of size two, and the odd position \(\{1\}\) gives a set of size one. The resulting smaller cycles have setup costs \(k_1/2\) and \(k_1\), with weights \(2/3\) and \(1/3\). Each set contributes \(k_1/3\), so the weighted average is \((2/3)(k_1/2)+(1/3)k_1=2k_1/3\). The upstream size \(m_2=8\) illustrates the even case: the sets \(\{0,2,4,6\}\) and \(\{1,3,5,7\}\) both have size four. Each has weight \(4/8\) and setup cost \(k_2/4\), giving a weighted average of \((4/8)(k_2/4)+(4/8)(k_2/4)=k_2/4\).

For either stage \(i\in\{1,2\}\), the original cycle size \(m_i\) counts positions before separation, whereas \(g\) counts the positions in one set and becomes the smaller cycle size. When \(m_i\ge2\), both the even and odd sets are nonempty. An odd size \(m_i\ge3\) gives unequal set sizes, as in the downstream example; an even size gives equal set sizes, as in the upstream example. A set of size \(g\) has weight \(g/m_i\) and setup cost \(k_i/g\), so each set contributes \((g/m_i)(k_i/g)=k_i/m_i\). Adding the two contributions gives a weighted average setup cost of \(k_i/m_i+k_i/m_i=2k_i/m_i\). An original cycle of size \(m_i=1\) forms one set with weight one and unchanged setup cost.

The convexity of the joint inventory value gives an original joint inventory cost of at least twice the weighted average smaller-cycle inventory cost minus \(V_0\). To compare total costs, we must, therefore, also count the smaller cycles' setup costs twice, requiring at most \(2\cdot(2k_i/m_i)=4k_i/m_i\) at stage \(i\). In the running example, twice the weighted setup total is \(2(2k_1/3+k_2/4)=4k_1/3+4k_2/8\), exactly the setup cost of the original sizes three and eight at coefficients \((4k_1,4k_2)\). This explains the quadrupled setup coefficients in \eqref{eq:joint-four-main}. All cycle sizes remain integer, and subtracting \(V_0\) avoids counting the underlying lead-time inventory cost twice.

For given downstream and upstream cycle sizes \((m_1,m_2)\), the cost \(k_1/m_1+k_2/m_2+V_{m_1,m_2}\) consists of linear setup terms and an inventory term independent of \((k_1,k_2)\). Taking the infimum over \((m_1,m_2)\) makes \(\mathcal J(k_1,k_2)\) concave in the setup coefficients. Since \(2=(2/3)1+(1/3)4\), the concavity of \(\mathcal J\) gives the first inequality below, and \eqref{eq:joint-four-main} gives the second:
\begin{equation}
 \mathcal J(2k_1,2k_2)\ge\frac23\mathcal J(k_1,k_2)+\frac13\mathcal J(4k_1,4k_2)\ge\frac43\mathcal J(k_1,k_2)-\frac13V_0.
 \label{eq:joint-two-main}
\end{equation}
This step turns a comparison of cycle sizes into a comparison of setup costs. It applies after optimizing both sizes, so the optimizer can change when setup charges change.

\subsection{The Cancellation That Gives the Constant $3/5$}

The first lower bound in \eqref{eq:two-scale-lower}, together with \eqref{eq:joint-sandwich-main} and \eqref{eq:joint-two-main}, gives
\begin{equation}
 L_{\rm two}\ge\tfrac12\mathcal J(2k_1,2k_2)
 \ge\tfrac23\mathcal J(k_1,k_2)-\tfrac16V_0.
 \label{eq:comparison-low-zero-main}
\end{equation}
The second lower bound in \eqref{eq:two-scale-lower}, using \eqref{eq:joint-four-main}, gives
\begin{equation}
 L_{\rm two}\ge\tfrac12V_0+\tfrac14\mathcal J(4k_1,4k_2)
 \ge\tfrac12\mathcal J(k_1,k_2)+\tfrac14V_0.
 \label{eq:comparison-high-zero-main}
\end{equation}
The comparison in \eqref{eq:comparison-low-zero-main} is stronger when the cost of operating without setups is relatively small. The comparison in \eqref{eq:comparison-high-zero-main} is stronger when that cost is relatively large. Take \(3/5\) of \eqref{eq:comparison-low-zero-main} and \(2/5\) of \eqref{eq:comparison-high-zero-main}. The two coefficients of \(V_0\) cancel exactly, leaving
\begin{equation}
 L_{\rm two}\ge\frac35\mathcal J(k_1,k_2).
 \label{eq:cancellation-main}
\end{equation}
Combine this with the remainder-group policy construction to prove \eqref{eq:main-five-thirds}. Appendix~\ref{app:joint-boundary} proves finite attainment in the positive-cost case and provides limiting arguments for zero-cost \mbox{boundaries}.

The cancellation leading to \eqref{eq:cancellation-main} explains why we retain the system's zero-setup cost. That cost can help or weaken a particular comparison, depending on the price scale being used. Keeping it in both comparisons allows its contribution to cancel. Bounding each stage's cost separately by its worst-case factor can hide the shared \(V_0\) term and the cancellation it permits. The $5/3$ constant follows from using the same joint inventory calculation in both comparisons.

\subsection{Interpreting the Two Cost Bounds}

Figure~\ref{fig:proof-comparison} displays the calculation when \(J=\mathcal J(k_1,k_2)>0\). Let \(v=V_0/J\), the share of the joint cost that remains even if replenishment setups are free. Since \(0\le V_0\le J\), this share lies between zero and one. The first comparison guarantees \(L_{\rm two}/J\ge2/3-v/6\); the second guarantees \(L_{\rm two}/J\ge1/2+v/4\). Their maximum is at least \(3/5\) throughout, with equality at \(v=2/5\).

\begin{figure}[t]
\centering
\includegraphics[width=.576\linewidth]{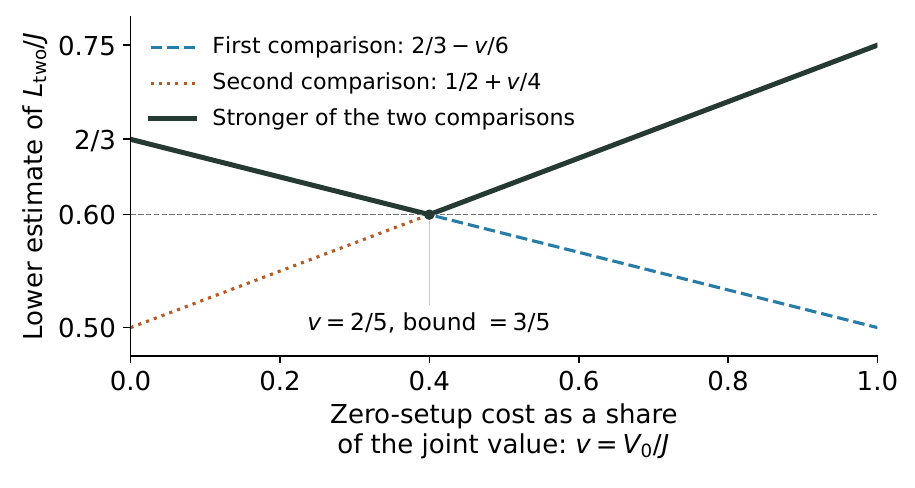}
\caption{The two comparisons cover different shares of unavoidable inventory cost. Their larger value is always at least \(3/5\). The curves plot the two proved inequalities.}
\label{fig:proof-comparison}
\end{figure}

The picture explains why retaining the zero-setup cost is useful even though it disappears from the final constant. When \(v=V_0/J\) is small, subtracting \(v/6\) leaves the first bound \(2/3-v/6\) close to \(2/3\). As \(v\) increases, the positive term \(v/4\) raises the second bound \(1/2+v/4\). At the crossing, either argument provides exactly the same estimate. The weighted cancellation is an economical way to cover all possible shares without identifying the system type in advance. The curves describe these algebraic bounds without indicating how often either is stronger in practice.

The fractions in \eqref{eq:two-scale-lower} follow from the chosen cost splits. Equal allocation, \(\eta=\zeta=1/2\), gives the factor \(1/2\). Taking \(u=1/2\) also reserves half the inventory cost for \(V_0\), leaving the factor \((1/2)(1/2)=1/4\). The choices \(u=1\) and \(u=1/2\) match the setup scales two and four in the cycle-size comparison. Optimizing \(u,\eta,\zeta\) in \eqref{eq:sp-bound-main} may strengthen the bound for a particular instance. Both linear estimates equal \(3/5\) at \(v=2/5\); changing their final mixing weights cannot improve the uniform estimate. The weights \(3/5\) and \(2/5\) attain that estimate. One route to a factor below \(5/3\) is a sharper comparison between \(\LBsp\) in \eqref{eq:sp-bound-main} and \(\mathcal J(k_1,k_2)\), raising the guaranteed share \(3/5\) by a fixed positive amount across all instances. This calls for a better estimate near \(v=2/5\), where the two linear bounds in \eqref{eq:comparison-low-zero-main} and \eqref{eq:comparison-high-zero-main} meet. Our analysis does not determine the best uniform factor obtainable from the full family in \eqref{eq:sp-bound-main}.

The crossing also shows the distinction between a proof's least favorable estimate and a policy class's worst performance. To attain the factor \(5/3\), a real instance would have to make the conversion from auxiliary cycles to a classical policy, the cycle scaling, the allocation lower bound, and the final crossing sufficiently tight at the same time. The argument establishes their inequalities individually. It does not exhibit an instance attaining all of them. The impossibility construction establishes an actual loss from the classical restriction, whose factor falls below \(5/3\).

This distinction has a practical counterpart. When a lower-bound calculation gives a loose comparison, it can be tempting to interpret its largest difference as the value of a more flexible replenishment policy. A policy improvement requires an actual feasible operating rule with a lower evaluated cost. Section~\ref{sec:impossibility} provides such a rule for a particular family, while Section~\ref{sec:numerics} evaluates selected alternatives in other environments. Keeping the two types of evidence separate prevents a conservative guarantee from being read as an expected operating loss.

\subsection{A Guaranteed Policy-Selection Procedure}

With \(\lambda,h_1,h_2,p>0\), the argument gives the following procedure for selecting a classical policy.
\begin{enumerate}[1.]
\item Find a global minimum of \eqref{eq:joint-main} over integer cycle sizes and integer inventory placements. Retain the sizes \(m_1,m_2\), the downstream target \(r\), and the upstream endpoint \(z\). Appendix~\ref{app:joint-boundary} gives finite search bounds and a quantile calculation for \(r\).
\item Partition the upstream positions \(0,\ldots,m_2-1\) by their remainders after division by \(m_1\). For each nonempty group with remainder \(b\) and \(g_b\) positions, construct the controls
\[
 (R_1,R_2,Q_1,Q_2)=(r-m_1,\ z-b-g_bm_1,\ m_1,\ g_bm_1).
\]
The policy multiplier is \(n=g_b\).
\item Evaluate each candidate's complete lot-accounted cost and select the cheapest, denoted by \(\widehat\Pi\). Implement its fixed lot sizes and reorder points.
\end{enumerate}
The grouping comparison and \eqref{eq:cancellation-main} give \(\Clot(\widehat\Pi)\le\mathcal J(k_1,k_2)\le(5/3)L_{\rm two}\le(5/3)\OPTsh\). Since \(\Csh(\widehat\Pi)\le\Clot(\widehat\Pi)\), the selected policy also satisfies the shipment-cost guarantee. At zero-cost boundaries, Appendix~\ref{app:joint-boundary} provides finite policies within any positive additive tolerance of the bound.

The procedure checks finitely many candidates, although the search can be large when holding or backlog costs are small. We recommend using the resulting policy as a starting candidate and retaining any cheaper classical policy found under the same setup-accounting convention, preserving the \(5/3\) guarantee. Section~\ref{sec:numerics} evaluates a separate direct search over classical controls.
\section{A Limit to Uniform Improvement}\label{sec:impossibility}

The uniform guarantee raises a natural question: how close to one could the approximation factor become if the policy parameters were chosen perfectly? The classical \((R,nQ)\) class itself imposes a positive gap. We construct finite Poisson systems in which the best policy costs more than $1.36$ times the unrestricted shipment optimum. Along a sequence of such systems, the proved lower bound on this cost ratio approaches $(1+\sqrt{3})/2=1.3660254\ldots$. The comparison uses an explicit feasible competing policy and the infimum over every classical integer lot size and reorder point. It, therefore, establishes a limitation of the policy class under shipment accounting, independently of the analytic lower bound used to prove the upper guarantee $5/3$.

The mechanism comes from a separation between upstream replenishment and downstream transfers. Upstream orders are free, transportation downstream is immediate, and storing a unit upstream or downstream has the same cost. A flexible policy can replace demand one unit at a time at the external supplier while accumulating those units upstream for a later transfer. Its total stock remains close to a base-stock target, even though downstream transfers occur infrequently. The classical rule links the two lot sizes through $Q_2=nQ_1$. In the instances below, infrequent transfers consequently require a substantial upstream lot, which adds inventory as the upstream order is consumed. Optimizing the four classical controls cannot remove this additional cycle inventory.

\begin{theorem}[{\sc A Lower Limit for the Classical Policy Class}]\label{thm:impossibility}
Let $\rho_\circ=(1+\sqrt{3})/2$. For every $r<\rho_\circ$, there is a finite instance of the model such that
\begin{equation}
 \inf_{\Pi\in\mathcal P_{\RnQ}}\Csh(\Pi)
 >r\,\OPTsh.
 \label{eq:impossibility-ratio}
\end{equation}
The same conclusion holds with $\Clot$ in the numerator. The instances can be chosen with all cost coefficients, the demand rate, and both lead times strictly positive. Thus, no uniform factor of $1.366$ or below holds against $\OPTsh$ under either numerator convention.
\end{theorem}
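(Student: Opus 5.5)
The plan is to exhibit an explicit family of instances in the regime sketched before the statement and to compare two quantities on it. The first is a lower bound on every classical policy's cost, valid uniformly over $(R_1,R_2,Q,n)$ and obtained from Proposition~\ref{prop:rnq-account-main}. The second is the cost of one explicit flexible shipment policy, which upper-bounds $\OPTsh$. I would first work at the boundary $k_2=0$, $L_1=0$, $h_1=0$, $h_2=1$, with $p>0$, $k_1=\lambda K_1>0$, and a small upstream lead time $L_2$. Only afterwards would I perturb these choices to strictly positive coefficients. I would then pass to a fluid scaling $\lambda\to\infty$ with $K_1$ scaled like $\lambda$ (so that optimal lots grow linearly), and $L_2$ fixed or shrinking. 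In this limit Poisson fluctuations during lead times are $O(\sqrt\lambda)$, which is negligible against cycle stocks of order $\lambda$. Both costs then reduce to deterministic sawtooth calculations, and the ratio becomes an explicit function of one parameter, the ratio $p/h_2$, to be optimized.

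For the classical side, I would start from $\Csh\le\Clot$ not helping (we need a lower bound on $\Csh$). I would therefore argue directly that under shipment accounting a classical policy still pays at least $K_1$ per downstream dispatch. Its dispatch rate is at least the rate of fillable request epochs, which in the fluid limit is $\lambda/(nQ)\cdot n=\lambda/Q$ up to vanishing merging effects. Alternatively, I would show that merging lots only happens when requests wait, and that waiting increases the inventory term by at least the saved setup. For the inventory term I would use the representation \eqref{eq:rnq-intuition} and apply Jensen's inequality to the convex $\phi$, replacing $D_1,D_2$ by their means. This yields a deterministic lower bound that is valid for every finite instance, not only asymptotically. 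The resulting cost combines three pieces. The first is $h_2\E(\ell+QJ_n-\lambda L_2)^+$, whose minimum over $\ell$ still carries upstream cycle stock of order $(n-1)Q/2$. The second is the downstream sawtooth penalty $\E\phi_{h_2,p}(s-V_Q-\cdots)$, which has EOQ-with-backlog shape $\frac{h_2p}{h_2+p}\frac Q2$. The third is the setup term $k_1/Q$. Minimizing over $n\ge1$, $Q$, $s$ and $\ell$ gives an explicit classical infimum; I expect $n=1$ to be optimal here, since $k_2=0$.

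For the competitor, I would take a policy that orders one unit from the supplier per demand, keeping the Stage-2 echelon position at a constant target $S$. Stage~1 dispatches all available upstream stock whenever its net inventory hits a trigger $-b$. Its echelon-2 stock is then essentially constant while downstream stock is a sawtooth with backlog. Because $h_1=0$, cost depends only on total stock and backlog, so cycle stock is charged only through the backlog/holding tradeoff downstream and never as an upstream sawtooth. Admissibility and the cost evaluation follow from a regenerative argument, with LLN corrections of order $O(\sqrt\lambda)$. Optimizing $S$, $b$ and the transfer size and taking the ratio, then maximizing over $p/h_2$, should produce $(1+\sqrt3)/2$ as the supremum of the limiting ratio. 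Given $r<\rho_\circ$, I would pick finite $\lambda$ and $p/h_2$ close enough to the limit. I would then raise $k_2,h_1,L_1$ to small positive values, using continuity of the flexible cost in these parameters and monotonicity of the classical lower bound (costs are nondecreasing in the coefficients) to keep a strict inequality. Finally, $\Clot\ge\Csh$ extends the result to the lot numerator.

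The main obstacle is the classical lower bound under shipment accounting, uniformly in $\ell$, $n$ and the reorder points. Negative $\ell$ can make requests wait and merge lots, and dispatching several lots together then saves setups. I would first try to show, via the exact counter construction of Appendix~\ref{app:policy-cost}, that any merging forces a backlog increase at least as costly as the saved charge. If that fails, I would choose $p$ large relative to $h_2$ along the sequence so that waiting is prohibitively expensive, accepting a possibly less clean path to the constant.
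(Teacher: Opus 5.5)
There is a genuine gap. The regime you chose contains no separation between classical and flexible policies, so your plan would produce a limiting ratio of one for every value of $p/h_2$, not $(1+\sqrt3)/2$.

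At your boundary ($h_1=L_1=k_2=0$), every admissible policy has running cost $h_2W+h_2X^++pX^-\ge\phi_{h_2,p}(X)$. Here $X=\mathit{IL}_1=\mathit{IP}_1$ is a single-stage position raised only by downstream dispatches, each costing $K_1$. Lemma~\ref{lem:lb-unit-stage} then gives $\OPTsh\ge\mathcal C_{\phi_{h_2,p}}(k_1)$, which is the deterministic EOQ-with-backlog cost.

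Your Jensen step replaces $D_1,D_2$ by their means. The resulting classical lower bound is an infimum over controls, so it is at most its value at $n=1$, $\ell=\lceil\lambda L_2\rceil$ and the best $(Q,s)$. That value is at most $\mathcal C_{\phi_{h_2,p}}(k_1)+h_2$. Hence your classical bound divided by $\OPTsh$ is at most $1+h_2/\OPTsh$ on every instance of this family.

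In your fluid scaling the true classical cost is also close. Common lots with $R_1=R_2$ keep $W\equiv0$ and exceed $\mathcal C_{\phi_{h_2,p}}(k_1)$ by only $O(\max\{h_2,p\}\sqrt{\lambda L_2})$, against costs of order $\lambda$. Your competitor is actually worse there. Holding echelon stock constant near $S$ costs about $h_2S$, at least twice the sawtooth holding cost of a transfer of the same size.

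The missing idea is that the gap must be created by lead-time demand uncertainty, not washed out by it. The flexible policy wins by reusing, as its transfer quantity, a safety stock that every policy is forced to hold. The paper takes $\lambda=p=1$, $L_2=1/S$ and $h_2=L_2^S/S!$, so the newsvendor level is about $S$ even though mean lead-time demand is $1/S$. The unit-order competitor then pays about $h_2S+k/S$.

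A classical policy's total echelon stock $Y=\mathit{IL}_2$ is distributed as $R_2+1+J_{Q_2}-D_2$. The tail mass $\Pr(D_2=S-2)$, of order $h_2S^4$, forces the lowest position of any competitive cycle to be at least $S-2$. The classical policy therefore pays the same safety stock, plus cycle stock $h_2(Q_2-1)/2$, plus setup $k/Q_2$. With $k=\alpha h_2S^2$ the ratio is $(1+\sqrt{2\alpha})/(1+\alpha)$. So the parameter to optimize is $\alpha$, not $p/h_2$, and no Jensen-type bound can detect the forced safety stock.

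The same device removes your shipment-accounting obstacle without analyzing merging. Any aggregate downstream dispatch of a classical policy contains at most $Q_2$ units, because a receipt releases at most $n$ lots and a demand at most one. So the dispatch rate is at least $1/Q_2$. Combining this with $c\ge\phi_{h_2,p}(Y)$, which holds for every split of stock between the stages, gives $\Csh\ge J_{\mathrm{imp}}(k)$ uniformly in $n$, $\ell$ and $R_1$.

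Finally, raising $L_1$ above zero is not a coefficient increase, so your monotonicity step does not cover it. The paper instead rechecks that the dispatch cap and $\mathsf B\ge Y^-$ survive the perturbation. It bounds the competitor's extra cost by running a delayed shadow copy of the boundary policy.
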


Consider $h_1=K_2=L_1=0$, unit demand and backlog rates, $L_2>0$, and $h_2>0$. Write $k=K_1$ and $D_2\sim\operatorname{Poisson}(L_2)$, and define
\begin{equation}
 F_{\mathrm{imp}}(z)=\mathbb E\{h_2(z-D_2)^+ +(z-D_2)^-\},\qquad
 J_{\mathrm{imp}}(k)=\inf_{\substack{m\in\mathbb N\\ \sigma \in\mathbb Z}}
 \left\{\frac{k}{m}+\frac1m\sum_{j=0}^{m-1}F_{\mathrm{imp}}(\sigma +j)\right\}.
 \label{eq:impossibility-scalar}
\end{equation}
Here $\sigma $ is the lowest inventory position in a cycle. On this boundary of the parameter space, $J_{\mathrm{imp}}(k)$ equals the optimized classical cost under both accounting conventions. To see why both setup-accounting conventions give the same answer, observe that a classical downstream dispatch contains at most $Q_2$ units. At an upstream receipt, any previously unfilled request implies that less than one $Q_1$-lot was available immediately before that receipt; the arriving $Q_2=nQ_1$ units can, therefore, release at most $n$ complete lots. At a demand epoch, at most one new $Q_1$-request is created. Rate balance then forces an actual shipment setup cost of at least $k/Q_2$. The upstream inventory position cycles over $Q_2$ positions; subtracting the independent lead-time demand gives the inventory part of \eqref{eq:impossibility-scalar}. A common-lot policy attains this cycle cost under both conventions.

The competing policy chooses an integer $S$ and places one upstream order at every demand. Let $N$ count demands and set $D(a,b]=N(b)-N(a)$. Its total net echelon inventory is $Y(t)=S-D(t-L_2,t]$. It holds received units upstream while downstream net inventory is nonnegative. When a demand creates downstream backlog, it dispatches all available upstream stock; when a receipt finds downstream backlog, it dispatches that unit immediately. Every decision uses observed demand and available material. Because stock is transferred whenever it could eliminate backlog, downstream backlog equals $Y(t)^-$. Its inventory cost $I$ and actual downstream shipment rate $\nu$ satisfy
\begin{equation}
 I=F_{\mathrm{imp}}(S),\qquad
 \frac1S\le\nu\le\frac{1+L_2}{S}
 +\left(1-\frac1S\right)\Pr(D_2\ge S).
 \label{eq:impossibility-policy-bounds}
\end{equation}
This feasible policy has total cost $I+k\nu$, so $\OPTsh\le I+k\nu$. The rate estimate includes receipt-triggered emergency shipments. We obtain this estimate by bounding selected shipment events by all demand and receipt events, without assuming that demand observed at a selected shipment epoch has an unconditional Poisson distribution.

Take $S\to\infty$ through integers, $L_2=1/S$, $h_2=L_2^S/S!$, and $k=\alpha h_2S^2$, where $0<\alpha\le1/2$. The Poisson probabilities imply $I/(h_2S)\to1$ and $S\nu\to1$. For the competing policy, the normalized setup cost is $k\nu/(h_2S)=\alpha S\nu\to\alpha$, so its normalized total cost tends to $1+\alpha$. For classical policies, balancing the leading setup and cycle-inventory terms $k/m+h_2m/2$ gives $\sqrt{2kh_2}=\sqrt{2\alpha}\,h_2S$. The bounds in \eqref{eq:imp-global-cycle-bounds} justify this balance over all integer lots and placements, giving $J_{\mathrm{imp}}(k)/(h_2S)\to1+\sqrt{2\alpha}$. Dividing both costs by $h_2S$, therefore, gives
\begin{equation}
 \frac{J_{\mathrm{imp}}(k)}{I+k\nu}
 =\frac{J_{\mathrm{imp}}(k)/(h_2S)}{I/(h_2S)+\alpha S\nu}
 \longrightarrow\frac{1+\sqrt{2\alpha}}{1+\alpha}.
 \label{eq:impossibility-limit}
\end{equation}
The maximum is $\rho_\circ$, attained at $\alpha=2-\sqrt{3}$. Establishing the best classical policy's limiting cost requires excluding very large lots and excessively low reorder positions. Appendix~\ref{app:impossibility} gives the global exclusions, admissibility proof, and extension to positive coefficients and lead times.

The conclusion can also be verified in an explicit finite example. Set $S=1000$, $L_2=1/1000$, $h_2=L_2^{1000}/1000!$, and $k=267949h_2$. Exact bounds give $J_{\mathrm{imp}}(k)/h_2\ge1729$ and $(I+k\nu)/h_2<1269$. Their ratio exceeds $1.36$. Every positive primitive in this boundary example is rational, and the comparison uses analytic Poisson bounds and integer arithmetic. Its extremely small holding rate is permitted by a uniform statement over the full primitive space. The example identifies an environment that a uniform guarantee must cover, but it does not describe the frequency or magnitude of losses in ordinary applications.

The result survives small positive changes to the zero coefficients. For a fixed finite instance with a strict gap, the competing policy can retain its dispatch decisions and allow each downstream shipment a small positive travel time. The resulting extra backlog cost is bounded by the quantity in that short pipeline. Positive downstream holding and upstream setup charges likewise add arbitrarily small amounts to its cost. The classical lower bound $J_{\mathrm{imp}}(k)$ defined in \eqref{eq:impossibility-scalar} continues to hold because, for fixed classical controls, the dispatch-size bound and upstream inventory cycle are unchanged from the case $h_1=K_2=L_1=0$. This establishes the claim for strictly positive primitives without interchanging optimization and a continuity limit.

For a manager, the relevant flexibility is the ability to replenish the system frequently while transferring accumulated stock downstream less frequently. This becomes valuable when upstream setup costs are small and the storage location has little effect on holding cost. The lower bound quantifies a limit to the benefit obtainable by retuning classical reorder points and lots alone. Its benchmark is specifically $\OPTsh$. The competing policy uses variable downstream shipment quantities, so the argument does not establish the same impossibility bound relative to the fixed-batch benchmark $\OPTlot$. Nor does the construction identify the exact worst-case factor of the classical class: it leaves a gap between the lower limit $\rho_\circ$ and the uniform upper guarantee $5/3$.
\section{Using the Result in an Operating Decision}\label{sec:managerial}

To decide whether to change replenishment practice, a manager needs the current policy's cost, a valid lower bound on the same system's optimal cost, and a description of the proposed operating change. The results support three steps: bound the savings available, identify the source of improvement, and account for implementation costs.

\vspace{-6pt}
\subsubsection*{A bound on the improvement still available.}

Suppose a manager has evaluated a feasible classical policy with cost rate \(C\) and a valid lower bound \(L>0\) on the same system's optimal shipment cost. Then \(L\le\OPTsh\le C\), and
\begin{equation}
 0\le\frac{C-\OPTsh}{C}\le1-\frac LC.
 \label{eq:manager-saving}
\end{equation}
The right side bounds the fraction of current cost that any replenishment policy could save. The evaluated policy need not have optimal classical controls. For example, if annual policy cost is 100 and a valid lower bound is 85, any policy change saves at most 15. A feasible alternative costing 92 realizes a saving of eight and leaves at most seven more available. These hypothetical values illustrate how the bound limits further savings.

For a policy satisfying the uniform factor \(5/3\), the optimal policy can save at most 40\% of its cost. The theorem constructs such a policy with positive holding and backlog costs and gives a limiting sequence at zero-cost boundaries. In the impossibility construction, the explicit flexible policy saves a fraction approaching \(2-\sqrt3\), about 26.8\%, of the best classical policy's cost. That example shows that substantial savings are possible even after all classical controls have been optimized; their size in a particular operation requires its own evaluation.

\vspace{-6pt}
\subsubsection*{Three changes that should be evaluated separately.}

Table~\ref{tab:managerial-changes} distinguishes three changes to current practice. Separating them helps identify whether an improvement comes from better controls, shared dispatch charges, or more flexible quantities.

\begin{table}[t]
\centering
\caption{Operating changes and the comparison needed to evaluate them.}
\label{tab:managerial-changes}
\begin{tabular}{>{\raggedright\arraybackslash}p{.24\linewidth}>{\raggedright\arraybackslash}p{.32\linewidth}>{\raggedright\arraybackslash}p{.34\linewidth}}
\toprule
Proposed change & What becomes adjustable & Appropriate evaluation\\
\midrule
Retune classical controls & Lot sizes, upstream multiplier, and both reorder points & Recompute the same exact policy cost under the current setup convention\\
Consolidate complete lots & Number of lot charges paid at a common dispatch & Evaluate shipment counts and reoptimize controls under shipment accounting\\
Allow variable deliveries & Quantity transferred at each observed state & Evaluate a feasible policy with the same material, information, and lead-time restrictions\\
\bottomrule
\end{tabular}
\end{table}

Retune lot sizes and both reorder points jointly. A new upstream multiplier changes when material becomes available, so keeping the old reorder points can overstate coordination costs. Section~\ref{sec:guarantee} explains how the policy construction adjusts these controls together.

For consolidation, the charged activity determines the appropriate comparison. A vehicle dispatch can carry several complete lots for one setup charge; an activity performed separately for each lot still incurs each lot's charge. Reoptimizing controls under shipment accounting may, therefore, lower cost within the classical class. Section~\ref{sec:numerics} illustrates this opportunity with unit downstream lots. With these lots, every positive integer shipment is a complete-lot shipment.

Variable deliveries can separate upstream replenishment and downstream transfer frequencies. The impossibility example demonstrates their value under the same shipment-accounting convention after optimizing all classical controls. The example is particularly relevant when upstream setups are inexpensive and upstream storage adds little cost. Evaluate any flexible rule under the same material and information restrictions as the current rule.

\vspace{-6pt}
\subsubsection*{Scope and practical use of the model.}

The comparison assumes stationary Poisson unit demand, deterministic lead times, full backlogging, and the modeled holding, backlog, and setup costs. Within these assumptions, all cost and lead-time ratios are allowed. Changing a supplier's lead time, adding transport capacity, or changing the service promise requires a new comparison: \eqref{eq:manager-saving} measures policy improvement within a fixed system.

The cost bound also gives a screen for implementation expense. Additional recurring administrative or implementation costs, measured on the same time basis as \(C\) and \(L\), cannot be justified by modeled operating savings if they exceed \(C-L\). When that margin is small, extensive policy redesign has limited value. When it is large, evaluating a feasible alternative or strengthening the lower bound can clarify whether the improvement opportunity warrants further effort.
\section{Numerical Evidence and Operating Implications}\label{sec:numerics}

The numerical study asks how the four controls respond to differences between stages, how much an instance-specific cost bound improves on the uniform guarantee, and whether shipment accounting changes the preferred classical policy. We evaluate classical policies under Poisson demand using the exact cost formula and compare their costs with a lower bound computed by searching over cost allocations and scales in \eqref{eq:sp-bound-main}. This numerical search explores the broader combined family; the \(5/3\) guarantee uses only the two simplified bounds defining \(L_{\rm two}\) in \eqref{eq:two-scale-lower}.

\subsection{Evaluation Method and Interpretation}

For each candidate downstream lot size \(Q\), multiplier \(n\), and integer difference \(\ell=R_2-R_1\), we form the effective demand in \eqref{eq:effective-demand}. An integer \(p/(h_1+h_2+p)\)-quantile chooses the best downstream endpoint \(s=R_1+Q\). The exact formula in Proposition~\ref{prop:rnq-account-main} then gives the lot-accounted policy cost. We retain the lowest evaluated cost in the finite search menu. We call this the best-found policy, because the search does not provide a global exclusion of every larger upstream multiplier.

For the denominator, we evaluate feasible members of \eqref{eq:sp-bound-main}, including the zero-setup value, a selected allocation and scale, and a candidate at \(u=1\). Each selected allocation is reevaluated using integer-cycle losses. The maximum of these evaluated candidates is denoted by \(\widehat L_{\rm SP}\). In exact arithmetic, \(\widehat L_{\rm SP}\le\LBsp\le\OPTsh\), even if the search does not find the best allocation or scale. A more extensive policy search can lower the numerator; a more extensive lower-bound search can raise the denominator. Both can improve the comparison.

The reported ratio is the best-found lot-accounted cost divided by \(\widehat L_{\rm SP}\). Its distance from one combines any remaining policy inefficiency with the weakness of the evaluated lower bound. A large ratio does not by itself identify which source is responsible. This differs from Theorem~\ref{thm:impossibility}, whose denominator is the cost of a feasible competing policy and, therefore, establishes an actual limitation of the classical class.

Expectations are evaluated in floating point after a Poisson upper-tail truncation with target probability \(10^{-13}\). The implementation places the remaining probability mass at the largest retained demand value and removes extremely small weights in the loss calculation. These choices support high-accuracy numerical evaluation, but they do not provide rigorous error bounds that account for numerical rounding. Accordingly, the reported ratios are numerical estimates. The uniform theorem and impossibility example are analytic results and do not depend on these computations. Appendix~\ref{app:computation} describes the search ranges, data checks, and limitations.

\subsection{Effects of Costs and Lead Times on Policy Controls}

Four examples in Table~\ref{tab:configurations}, drawn in Figure~\ref{fig:rnq-regimes}, show why all four controls matter. Common lots can accompany either equal or different reorder points. A larger upstream lot can be useful with either a positive or a negative reorder-point difference. The difference, therefore, governs relative replenishment timing as well as the inventory held to cover transportation. A rule that forces it to have one sign would exclude useful classical policies.

\begin{table}[t]
\centering
\caption{Representative best-found integer configurations illustrated in Figure~\ref{fig:rnq-regimes}, with \(\lambda=1\).}
\label{tab:configurations}
\small
\begin{tabular}{@{}lcccc@{}}
\toprule
 & A & B & C & D\\
\midrule
Lead times \((L_1,L_2)\) & \((1.5,.25)\) & \((1.5,4)\) & \((1,4)\) & \((5,.1)\)\\
Setups \((K_1,K_2)\) & \((8,.05)\) & \((8,.05)\) & \((12,30)\) & \((.1,11)\)\\
Rates \((h_1,h_2,p)\) & \((1,2,5)\) & \((1,.5,20)\) & \((1,.2,10)\) & \((1,1,2)\)\\
Reorder points \((R_1,R_2)\) & \((0,0)\) & \((2,8)\) & \((0,5)\) & \((5,2)\)\\
Lot sizes \((Q_1,Q_2)\) & \((4,4)\) & \((4,4)\) & \((6,18)\) & \((2,6)\)\\
Lot-accounted cost & 10.2942 & 9.4300 & 9.5230 & 11.3746\\
\bottomrule
\end{tabular}
\end{table}

Set \(\lambda=1\), \(p=10\), and fix total setup, lead time, and echelon holding rates at six, two, and two. With \(\rho_K=K_1/K_2\), \(\rho_L=L_1/L_2\), and \(\rho_h=h_1/h_2\), the parameters are
\begin{equation}
 (K_1,K_2)=\frac6{1+\rho_K}(\rho_K,1),\quad
 (L_1,L_2)=\frac2{1+\rho_L}(\rho_L,1),\quad
 (h_1,h_2)=\frac2{1+\rho_h}(\rho_h,1).
 \label{eq:grid-normalization}
\end{equation}
This design changes where costs and delay arise while preserving their total scale. The downstream quantile level remains \(5/6\). Thus, these grids study asymmetry at a fixed service-cost tradeoff; the subsequent stress test examines a much smaller quantile level.

\FloatBarrier

\begin{figure}[t]
\centering
\includegraphics[width=.98\linewidth]{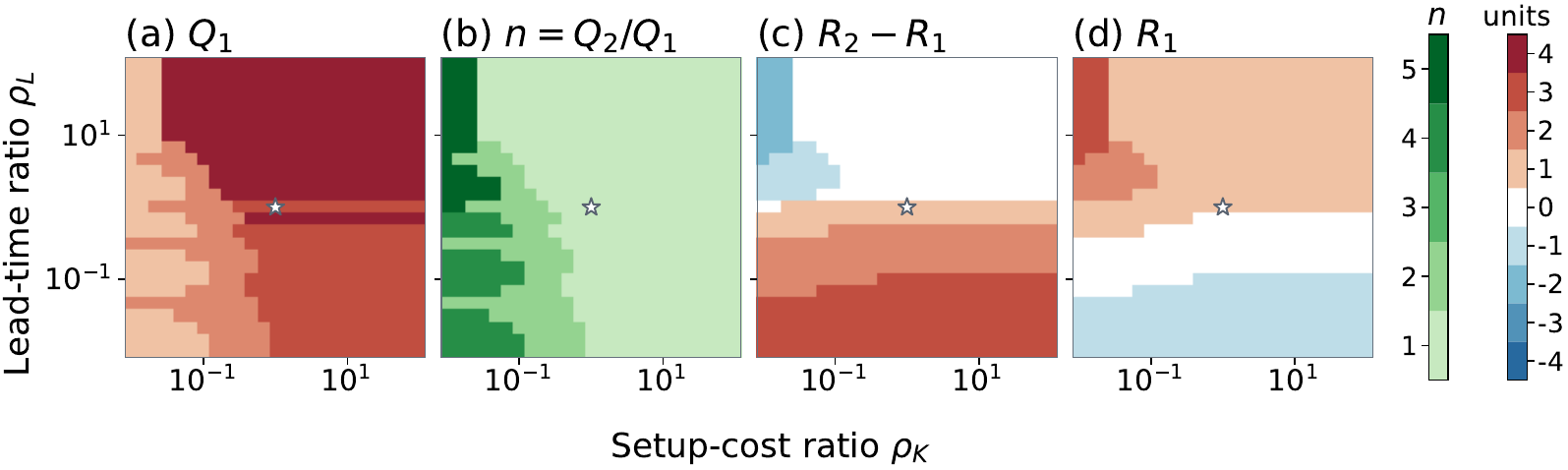}
\caption{Best-found lot sizes and reorder points as setup costs and lead times are redistributed between stages. The holding-rate ratio is one; the star marks equal division of all three parameter totals.}
\label{fig:rnq-response}
\end{figure}

Figure~\ref{fig:rnq-response} varies the setup and lead-time ratios over \([10^{-2},10^2]^2\), using 25 logarithmically spaced values per axis and \(\rho_h=1\). A small \(\rho_K\) makes downstream setups relatively inexpensive and tends to favor a smaller downstream lot and a larger upstream multiplier. A small \(\rho_L\) moves more transportation time upstream and tends to increase the reorder-point difference. The boundaries between selected controls are steps because lot sizes and reorder points are integers. At the symmetric setting marked with a star, the selected controls are \((R_1,R_2,Q_1,Q_2)=(1,2,3,3)\).

These patterns suggest different responses to changes in a supplier agreement. Reducing the upstream setup charge makes more frequent upstream orders attractive, but the appropriate downstream lot still depends on downstream shipment costs. Shortening upstream transportation time changes the inventory available to support downstream requests, so it can alter the reorder-point difference even when the preferred lot sizes remain fixed. The numerical patterns are conditional on the chosen normalization. They should not be interpreted as global monotonicity patterns.

\subsection{Comparisons with the Combined Lower Bound}

Figure~\ref{fig:sp-landscape} gives three two-dimensional views: setup versus lead-time ratios at \(\rho_h=1\), setup versus holding-rate ratios at \(\rho_L=1\), and lead-time versus holding-rate ratios at \(\rho_K=1\). The setup and lead-time ratios range from \(10^{-2}\) to \(10^2\); the holding-rate ratio ranges from \(10^{-3/2}\) to \(10^{3/2}\). Each panel contains \(25^2=625\) cells. The panels share intersections, leaving 1,801 distinct instances.

\begin{figure}[t]
\centering
\includegraphics[width=.98\linewidth]{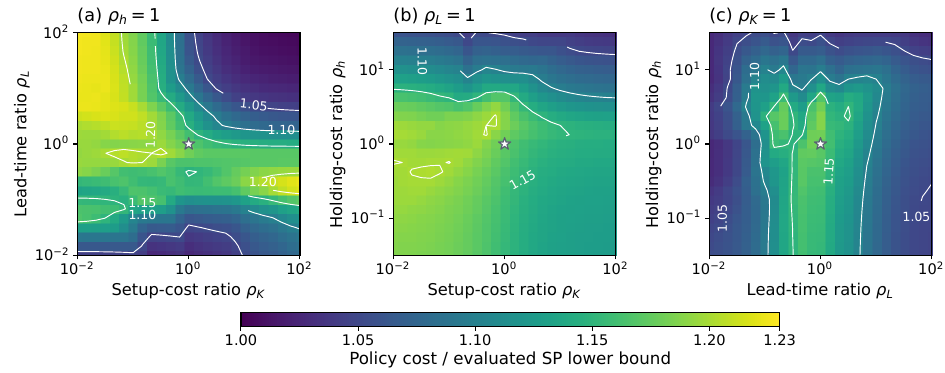}
\caption{Best-found lot-accounted cost divided by the evaluated combined lower bound. All three panels use the same color scale. The maximum over the 1,801 distinct instances is 1.2265.}
\label{fig:sp-landscape}
\end{figure}

The maximum computed ratio is \(1.2265\), the median is \(1.1208\), and the 90th percentile is \(1.1928\). The panel maxima are \(1.2265\), \(1.2021\), and \(1.1874\). The largest ratio occurs at \((\rho_K,\rho_L,\rho_h)=(.01,100,1)\), where the selected controls are \((3,1,1,5)\), their cost is \(8.9857\), and the evaluated lower bound is \(7.3261\). This point lies on the plotted boundary, so the reported maximum concerns the chosen grid. The median and percentile likewise summarize the experimental design rather than a probability distribution of business environments.

These comparisons are substantially below \(5/3\). They show that the evaluated bound can provide useful assurance beyond the uniform theorem on these grids. They also show why a manager should calculate an instance-specific comparison before attributing a large financial value to more complicated operating rules. At a ratio of \(1.12\), for example, an exact lower-bound evaluation would limit the best possible improvement to about \(10.7\%\) of the proposed policy's cost, because \(1-1/1.12\approx.107\). The ratio expresses the policy premium relative to the lower bound; the improvement percentage uses the current policy's cost as its denominator.

\subsection{An Asymmetric Stress Test and the Role of Shipment Accounting}

A separate grid places almost all holding cost downstream, makes backlog inexpensive, and makes upstream setups nearly free. Its fixed primitives are \(\lambda=h_2=1\) and
\[
 (L_2,K_1,h_1,p)=(58.3717,.501859,2.32828\times10^6,1.12801\times10^{-3}).
\]
The grid varies \(K_2/K_1\) from \(6.80\times10^{-7}\) to \(2.71\times10^{-5}\) and \(L_1/L_2\) from \(1.98\times10^{-6}\) to \(7.90\times10^{-5}\), again using 25 points per axis. The selected lot-accounted policy is \((-30,6,30,30)\) throughout. Ratios to the evaluated combined lower bound range from \(1.5445\) to \(1.6004\), with median \(1.5912\).

At the stress-grid instance with the largest ratio, \(K_2=3.41325\times10^{-7}\) and \(L_1=7.30770\times10^{-4}\). The common-lot policy costs \(0.060576\), while the evaluated lower bound is \(0.037852\). Because its multiplier is one, each downstream dispatch contains one lot, and the two setup conventions give this particular policy the same cost. Shipment accounting nevertheless permits a different classical policy to perform better: direct evaluation of the stationary cost formula over the unit-lot menu recorded in Appendix~\ref{app:computation} selects \((-1,9,1,33)\), with cost \(0.055303\), about \(8.7\%\) less, and ratio \(1.4610\) to the same bound.

This comparison involves different controls optimized under different accounting conventions. It illustrates how complete unit lots can be accumulated into a physical shipment that incurs one setup charge. A manager who treats the charge as payable per unit lot would avoid this choice, even when the actual activity is a single dispatch. Correctly identifying the charged activity can, therefore, change both the cost estimate and the recommended operating rule.

The existing simulation study also searches policies that may transfer fewer than a prescribed downstream lot when a complete lot is unavailable, following \citet{HuYang2014}. At the same instance, a separate menu of 1,785 control vectors with downstream lot sizes of at least two selects a candidate with independently estimated cost \(0.055608\). This exceeds the evaluated unit-lot classical cost by about \(0.6\%\). A separate search over classical policies with downstream lot sizes from one to eight selects a policy with one-unit downstream lots (\(Q_1=1\)). These finite searches provide no evidence of an additional gain from incomplete-lot transfers in this experiment. Policies outside these searches may achieve further savings.

The high ratios in the stress grid also do not show how much of the gap comes from a weak lower bound and how much comes from policy inefficiency. The stress experiment, therefore, complements the analytical impossibility example. The latter proves that flexible shipment quantities can yield a substantial improvement in some environments. The numerical study shows that a particular expansion of the shipment rules need not produce such an improvement in every difficult-looking environment. Evaluating the cost convention, the controls, and the benchmark separately gives a more useful diagnosis than interpreting the policy-to-bound ratio alone.
\section{Concluding Remarks}\label{sec:conclusion}

The classical echelon stock \((R,nQ)\) policy coordinates two stages with four integer controls and complete-lot transfers. We prove that its optimized cost is at most \(5/3\) of the optimal admissible shipment cost throughout the nonnegative parameter space. We charge the classical policy a setup cost for each complete lot, even when several lots are shipped together, while the optimal admissible policy pays once per shipment. A simple standardized operating structure, therefore, retains a uniform guarantee even against policies that can change their shipment quantities over time.


The guarantee has a substantive limit. No factor below \((1+\sqrt3)/2\) can hold for the classical class against the shipment optimum. The construction separates inexpensive unit upstream replenishment from less frequent, variable downstream transfers. Linking the upstream lot to a fixed downstream lot introduces additional cycle inventory that reorder-point optimization cannot eliminate. The result holds for strictly positive finite primitives as well as the simpler boundary construction used to explain it. The exact worst-case factor remains between \(1.366\ldots\) and \(1.6667\ldots\).

For managers, the results support evaluating standardization and flexibility through the actual activities that create cost. Standardized lots simplify replenishment, and the theorem limits their worst possible loss in the stated model. Shipment consolidation can improve performance within the classical class when several lots share one dispatch charge. Further flexibility can be valuable when upstream replenishment and downstream delivery should operate at different frequencies. The numerical comparisons help assess these possibilities for particular parameter choices, while their remaining gaps require separate attention to policy search and lower-bound strength.

Extensions require additional analysis. More than two stages impose several integer-multiple relationships, so a conversion that works for one adjacent pair must also preserve the next pair's feasibility. Compound demand removes the consecutive unit-demand cycle structure used here. Renewal demand, which allows nonexponential times between successive arrivals, changes the independence argument over lead-time windows. Lost sales change both inventory balances and the information relevant to optimal decisions. These issues make the joint-cycle approach a starting point for broader models, while the present theorem concerns the two-stage Poisson system.

\label{end:manuscript}\clearpage
\renewcommand\normalsizeXIyesCE{\OriginalDoubleSize\baselineskip17.5pt}

\DoubleSpaced
\setcounter{page}{1}
\renewcommand\thesection{\Alph{section}}
\renewcommand\theequation{\Alph{section}.\arabic{equation}}
\renewcommand\thedefinition{\Alph{section}.\arabic{definition}}
\renewcommand\thelemma{\Alph{section}.\arabic{lemma}}
\renewcommand\theproposition{\Alph{section}.\arabic{proposition}}
\renewcommand\thetheorem{\Alph{section}.\arabic{theorem}}
\renewcommand\theremark{\Alph{section}.\arabic{remark}}
\makeatletter
\@addtoreset{equation}{section}\@addtoreset{definition}{section}\@addtoreset{lemma}{section}\@addtoreset{proposition}{section}\@addtoreset{theorem}{section}\@addtoreset{remark}{section}
\makeatother
\setcounter{section}{0}\setcounter{equation}{0}\setcounter{lemma}{0}\setcounter{proposition}{0}\setcounter{theorem}{0}\setcounter{remark}{0}
\begin{center}
\textbf{\large Online Appendices to\\``A \(5/3\) Guarantee for Echelon Stock \((R,nQ)\) Policies\\in Two-Stage Stochastic Serial Systems"}
\end{center}
\noindent This electronic companion gives the complete arguments supporting the paper. We first establish material feasibility, exact policy costs, and the lower bounds. We then prove the joint-cycle comparison, including integer implementation and zero-cost cases, construct the impossibility example, and describe the numerical evaluation. We recall notation as needed.

\noindent The table summarizes the main cost quantities. Appendix~\ref{app:joint-cycles} uses \(m_1,m_2\) for the independent downstream and upstream cycle sizes and \(n=Q_2/Q_1\) for the classical multiplier.
\begin{center}
\small
\begin{tabular}{>{\raggedright\arraybackslash}p{.26\linewidth}>{\raggedright\arraybackslash}p{.65\linewidth}}
\toprule
Quantity & Meaning and where it is established\\
\midrule
\(k_i=\lambda K_i\) & Setup charge multiplied by the demand rate; a lot of size \(q\) pays rate \(k_i/q\).\\
\(H_0=h_2\lambda L_1\) & Upstream echelon holding for mean material in the downstream pipeline.\\
\(V_0\) & Optimal inventory cost with both setup charges zero; Appendix~\ref{app:zero-setup}.\\
\(\mathcal C_F(k)\) & Best average loss over consecutive integer positions plus setup cost; Appendix~\ref{app:lower-bounds}.\\
\(\mathcal A,\LBsp\) & Cost allocation and its combination with the zero-setup value; Appendices~\ref{app:lower-bounds} and~\ref{app:zero-setup}.\\
\(V_{m_1,m_2},\mathcal J\) & Joint inventory value and its optimized total cost; Appendix~\ref{app:joint-cycles}.\\
\(J_{\mathrm{imp}}(k)\) & Optimized classical cost on the impossibility family; Section~\ref{sec:impossibility} and Appendix~\ref{app:impossibility}.\\
\bottomrule
\end{tabular}
\end{center}

\section{Policy Costs and Admissibility}\label{app:policy-cost}

This appendix proves the physical-policy and lower-bound facts used in Sections~\ref{sec:policy} and~\ref{sec:benchmark}. Demand has rate \(\lambda>0\), lead times are finite and nonnegative, and all cost coefficients are finite and nonnegative. With zero demand, empty initial inventories and pipelines, and no orders, the cost is zero.

\subsection{The Admissible Comparison Class}\label{app:admissibility}

Let \(N\) count unit customer demands and let \(U_i\) count units physically dispatched into echelon \(i\). Internal requests enter \(U_1\) when material is dispatched. A policy uses only information available at the time of each decision and may use independent private randomization. Its cumulative dispatch quantities are nondecreasing integer processes, with only finitely many dispatches on each finite time interval. Compatible initial histories determine constants \(x_i\) and the pipelines; their initial inventory levels and pipelines have finite first moments. Fixed initial states and the stationary constructions below satisfy this convention. The balances and material constraint are
\begin{align}
 \mathit{IP}_i(t)&=x_i+U_i(t)-N(t),&
 \mathit{IL}_i(t)&=x_i+U_i(t-L_i)-N(t),\\
 P_i(t)&=U_i(t)-U_i(t-L_i)\ge0,&
 \mathit{IP}_1(t)&\le\mathit{IL}_2(t).
 \label{eq:app-physical-balances}
\end{align}
Thus, \(\mathit{IL}_i(t+L_i)=\mathit{IP}_i(t)-[N(t+L_i)-N(t)]\). Customer backlog is filled as soon as downstream stock is available, so \(I_1=\mathit{IL}_1^+\), \(\mathsf B=\mathit{IL}_1^-\), and \(I_2=\mathit{IL}_2+\mathsf B\). These conventions agree with Section~\ref{sec:model}.

The comparison class requires finite expected costs on finite horizons and the following stability conditions. Write \(U_0\) for customer deliveries, \(L_\Sigma=L_1+L_2\), and \(c_I=h_1I_1+h_2I_2+p\mathsf B\). As \(T\to\infty\),
\begin{gather}
 \E U_j(T)/T\longrightarrow\lambda\quad(j=0,1,2),\\
 T^{-1}\E\{|\mathit{IL}_i(T)|+P_i(T)\}\longrightarrow0\quad(i=1,2),\\
 T^{-1}\E\int_T^{T+L_\Sigma}c_I(t)\,dt\longrightarrow0,
 \qquad T^{-1}\E[U_i(T+L_\Sigma)-U_i(T)]\longrightarrow0.
 \label{eq:app-stability}
\end{gather}
Finally, consider each chronological inventory-position path, recording a unit demand decrease before any coincident dispatch increase. Apply chronological loop erasure as follows: whenever the path returns to a state already in the retained path, remove the segment from that earlier visit through the return and keep the repeated state once. Record each removed segment as a completed cycle. The retained path has distinct states. Let \(r_{i,T}\) count its remaining downward demand transitions through time \(T\). We require \(\E r_{i,T}=o(T)\), which we call the unfinished-path condition. Lemma~\ref{lem:lb-unit-stage} uses the same decomposition. These conditions concern the physical processes, so they are unchanged when fixed setup coefficients are rescaled. They also exclude permanent non-service when backlog is unpenalized.

The unfinished-path condition has a useful sufficient criterion. A loop-erased integer path has distinct states, so its remaining demand decreases are at most its state range. Thus, an expected range of \(o(T)\) suffices. In particular, every stationary inventory-position process \(X\) with \(\E|X(0)|<\infty\) satisfies this criterion. To see this, let \(D_j=N(j+1)-N(j)\). Since dispatches only raise the position, every state in the chronological path during \((j,j+1]\) lies between \(X(j)-D_j\) and \(X(j+1)+D_j\). For any identically distributed nonnegative integrable sequence \(Y_j\), the bound \(\max_{0\le j\le n}Y_j\le a+\sum_{0\le j\le n}Y_j\mathbf1_{\{Y_j>a\}}\) gives \(n^{-1}\E\max_{0\le j\le n}Y_j\to0\), by first sending \(n\) and then \(a\) to infinity. Apply this to \(|X(j)|\) and the Poisson counts \(D_j\). No independence between the sampled positions is required.

At coincident events, first receive shipments whose positive lead times expire, then process customer demand, and then perform the upstream-order and complete-lot matching decisions. A shipment with zero lead time is received at the decision epoch. Complete all consequences of that event before advancing time. Each finite-lot construction below has only finitely many such consequences at an epoch; simultaneous complete lots may share a physical dispatch.

\subsection{Stationary Complete-Lot Construction}\label{app:stationary-construction}

Fix \(Q,n\in\mathbb N\), \(\ell\in\mathbb Z\), and an integer endpoint \(s\), with downstream lot size \(Q_1=Q\) and upstream lot size \(Q_2=nQ\). Let \(N_0\) be the signed counter of a two-sided Poisson process, with \(N_0(0)=0\): for \(t<0\), \(N_0(t)\) is minus the number of demands in \((t,0]\). Choose \(N(0)\) independently and uniformly on \(\{0,\ldots,nQ-1\}\), and put \(N(t)=N_0(t)+N(0)\). Thus, increments of \(N\) count demand, and its initial value specifies the position within the replenishment cycle. Define
\begin{equation}
 A(t)=\left\lfloor\frac{N(t)}Q\right\rfloor,
 \quad O(t)=\left\lfloor\frac{N(t)+\ell+(n-1)Q}{nQ}\right\rfloor,
 \quad \mathsf S(t)=nO(t-L_2),
 \quad M(t)=\min\{A(t),\mathsf S(t)\}.
 \label{eq:app-counters}
\end{equation}
A sublot is one \(Q\)-unit piece of an upstream \(nQ\)-unit lot. The counters in \eqref{eq:app-counters} record downstream requests, upstream orders, received sublots, and matched sublots, respectively. Their integer origins include the compatible past history. Absolute counter values may, therefore, be negative; their nonnegative increments over a forward time interval count the corresponding requests or lots. Increments of \(QM\) are downstream dispatches; increments of \(nQO\) are supplier dispatches. The reorder points and lot sizes are \((R_1,R_2,Q_1,Q_2)=(s-Q,s-Q+\ell,Q,nQ)\).

The resulting state is
\begin{align}
 \mathit{IP}_2&=s-N+nQO,& \mathit{IL}_2&=s-N+Q\mathsf S,\\
 \mathit{IP}_1&=s-N+QM,& \mathit{IL}_1(t)&=s-N(t)+QM(t-L_1).
 \label{eq:app-counter-states}
\end{align}
All counters are nondecreasing. Pending requests and unused sublots are \((A-\mathsf S)^+\) and \((\mathsf S-A)^+\), respectively. Hence, matching uses available complete lots, \(\mathit{IP}_1\le\mathit{IL}_2\), and both pipelines are nonnegative. Also \(\mathit{IL}_1\le\mathit{IP}_1\le\mathit{IL}_2\), which implies nonnegative physical inventories. This is a causal implementation of the classical policy described by \citet{Chen}; the calculation below establishes its cost directly.

A state is settled after all ordering, matching, and associated zero-lead-time receipts at an epoch have been completed. A compatible initialization is a settled integer state satisfying \eqref{eq:app-physical-balances}, with an integer number \(\beta\ge0\) of pending downstream requests. The nominal downstream position adds the quantities of those pending requests to the actual position: \(\widehat{\mathit{IP}}_1=\mathit{IP}_1+Q\beta\in\{R_1+1,\ldots,R_1+Q\}\). The Stage-2 position satisfies \(\mathit{IP}_2\in\{R_2+1,\ldots,R_2+nQ\}\). The initial pipelines contain complete lots of sizes \(Q\) and \(nQ\), due within their respective lead times. Upstream stock \(W=\mathit{IL}_2-\mathit{IP}_1\) is nonnegative, and \(\beta>0\) requires \(W<Q\). All initial quantities have finite first moments. The canonical convention additionally requires \(W\in Q\mathbb Z\). \(\mathcal P_{\RnQ}\) denotes classical rules with integer controls \(Q,n\ge1\) and a compatible initialization; by Lemma~\ref{lem:rnq-remainder}, its optimized cost equals that of the canonical convention.

Both request counters in \eqref{eq:app-counters} use the same \(N\). Thus, the controls fix their relative timing, and changing the initial remainder of \(N\) moves both counters together. Physical stock held at Stage~2 is \(\mathit{IL}_2-\mathit{IP}_1=Q(\mathsf S-M)\), a multiple of \(Q\). We call this the canonical initialization convention. The remainder of physical upstream stock modulo \(Q\) is a separate invariant, addressed in Appendix~\ref{app:initial-remainder}. Uniformly randomizing the demand-counter remainder preserves this physical-stock remainder.

Write \(G_{\mathrm{adj}}(x)=\E\phi_{h_1,p+h_2}(x-D_1)\), \(G_0(x)=h_2x+G_{\mathrm{adj}}(x)=H_0+\E\phi_{h_1+h_2,p}(x-D_1)\), and \(H_0=h_2\lambda L_1\). The stationary lead-time identity and the independence of future Poisson increments give expected inventory cost \(\E[G_{\mathrm{adj}}(\mathit{IP}_1)+h_2\mathit{IL}_2]\). Equivalently, this is \(\E[G_0(\mathit{IP}_1)+h_2(\mathit{IL}_2-\mathit{IP}_1)]\).

\begin{proposition}[{\sc Exact Stationary Cost}]\label{prop:app-policy-cost}
Proposition~\ref{prop:rnq-account-main} holds, with cost
\begin{equation}
 \Clot=\frac{k_1}{Q}+\frac{k_2}{nQ}
 +\frac1n\sum_{j=0}^{n-1}\E\left[
 \mathcal L_QG_0\bigl(s-(\ell+jQ-D_2)^-\bigr)
 +h_2(\ell+jQ-D_2)^+\right],
 \label{eq:app-exact-cost}
\end{equation}
where \(\mathcal L_QF(x)=Q^{-1}\sum_{v=0}^{Q-1}F(x-v)\). Shipment accounting gives at most this cost.
\end{proposition}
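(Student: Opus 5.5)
The plan is to compute the long-run average cost of the counter construction \eqref{eq:app-counters} directly. We split it into setup and inventory parts and evaluate each using the uniform randomization of \(N(0)\) together with the Poisson structure.

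\textbf{Setup part.} The counter \(A\) increases by one every \(Q\) demands and \(O\) increases by one every \(nQ\) demands. Hence \(\E A(T)/T\to\lambda/Q\) and \(\E O(T)/T\to\lambda/(nQ)\). The matched-lot counter \(M=\min\{A,\mathsf S\}\) differs from \(A\) by at most \((A-\mathsf S)^+\), which is at most the number of requests made during a window of length \(L_2\) plus one upstream lot. Its expectation is therefore bounded uniformly in \(T\), so \(\E M(T)/T\to\lambda/Q\) as well. Lot accounting charges \(K_1\) per increment of \(M\) and \(K_2\) per increment of \(O\), which gives setup rate \(k_1/Q+k_2/(nQ)\). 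Shipment accounting charges at most one \(K_i\) per dispatch, and each dispatch contains at least one lot, so \(\Csh\le\Clot\). The same uniform-in-\(T\) bounds also verify the stability conditions \eqref{eq:app-stability} and the unfinished-path condition. Here we use the stationary-range criterion of Appendix~\ref{app:admissibility}, since \(\mathit{IP}_1\) is a stationary process with a finite first moment.

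\textbf{Inventory part.} We use the identity stated before the proposition: the expected inventory cost rate equals \(\E[G_0(\mathit{IP}_1)+h_2(\mathit{IL}_2-\mathit{IP}_1)]\) at a stationary time. This follows from \(\mathit{IL}_1(t+L_1)=\mathit{IP}_1(t)-D_1\), with \(D_1\) independent of \(\mathit{IP}_1(t)\), and from the mean \(\lambda L_1\) of the downstream pipeline.

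The core step is computing the joint law of \((\mathit{IP}_1,\mathit{IL}_2)\) at time \(t\). Write \(N(t)=N(t-L_2)+D_2\), where \(D_2\) is independent of \(N(t-L_2)\). By the randomization, \(N(t-L_2)\bmod nQ\) is uniform. Decompose this residue as \(v+Q(n-1-j)\) with \(v\) uniform on \(\{0,\dots,Q-1\}\) and \(j\) uniform on \(\{0,\dots,n-1\}\), independent of each other.

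We then evaluate the counters \(O(t-L_2)\) and \(A(t)\) in terms of \(v\), \(j\) and \(D_2\). The aim is to show that the available stock measured in sublots satisfies
\[
 Q\mathsf S(t)-Q A(t)=\ell+jQ-D_2+(\text{a rounding term}),
\]
where the rounding term is absorbed by \(v\). The consequences should be:
\[
 \mathit{IP}_1=s-v-(\ell+jQ-D_2)^-,\qquad \mathit{IL}_2-\mathit{IP}_1=(\ell+jQ-D_2)^+.
\]
The first relation says that a shortage delays the entire downstream cycle; the second says that any surplus waits upstream as complete lots.

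Given these, averaging \(G_0\) over \(v\) produces \(\mathcal L_Q G_0\), and averaging over \(j\) produces \(\frac1n\sum_j\). Together these yield \eqref{eq:app-exact-cost}, and hence \eqref{eq:rnq-exact-main}.

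\textbf{Main obstacle.} The hard step is the floor-function bookkeeping: showing that the shortage and surplus come in multiples compatible with \(\ell+jQ-D_2\) exactly, without off-by-\(Q\) errors. This includes checking that pending requests shift \(\mathit{IP}_1\) by whole lots, so that the sum \(v+(\cdot)^-\) is correct. The case \(L_2=0\) and coincident events also need care. My first attempt would be to verify the identities by writing \(N(t)+\ell+(n-1)Q=nQ\,O+\text{remainder}\), then substituting into \(A=\lfloor N/Q\rfloor\) and checking the two cases \(A\le\mathsf S\) and \(A>\mathsf S\) separately. Finally, I would pass from stationary expectations to the \(\limsup\) long-run average using the ergodicity of the periodic demand-counter construction together with the finite first moments.
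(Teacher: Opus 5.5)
Your setup-rate argument, the bound \(\Csh\le\Clot\), and the reduction of the inventory term to \(\E[G_0(\mathit{IP}_1)+h_2(\mathit{IL}_2-\mathit{IP}_1)]\) agree with the paper. The core step, however, is aimed at identities that cannot hold.

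\textbf{The pointwise targets are false.} Stage-2 physical stock is \(\mathit{IL}_2-\mathit{IP}_1=Q(\mathsf S-M)\), which is always a multiple of \(Q\). The quantity \((\ell+jQ-D_2)^+\) is not. For example, take \(Q=2\), \(n=1\), \(\ell=1\), \(D_2=0\). The stock is \(0\) or \(2\) according to the parity of \(N(t)\), and only its mean equals \(1\). So no floor-function bookkeeping will make the rounding term disappear; it has to be averaged out.

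\textbf{Your parametrization also fails conditionally.} Splitting \(N(t-L_2)\bmod nQ\) as \(v+Q(n-1-j)\) does not identify the sublot. The sublot is fixed by the residue of the upstream counter's argument \(N(t-L_2)+\ell+(n-1)Q\), and that residue mixes your \(v\) and \(j\) whenever \(\ell\not\equiv0\pmod Q\). With \(Q=n=2\), \(\ell=1\), \(D_2=0\), the mean upstream stock is \(2\) for both values of your \(j\). Your formula predicts \(1\) and \(3\); only the grand average agrees. In addition, \(\mathit{IP}_1\equiv s-N(t)\pmod Q\), so its cycle residue is \(N(t)\bmod Q\), not \(N(t-L_2)\bmod Q\).

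\textbf{The missing idea is a distributional statement, not a pointwise one.}
\begin{enumerate}[1.]
\item Condition on \(D_2=d\) and write \((N(t-L_2)+\ell+(n-1)Q)\bmod nQ=\kappa Q+u\). Set \(j=n-1-\kappa\), \(w=\ell+jQ-d\), and \(v=N(t)\bmod Q\). Your ``first attempt'' must use this \(N(t-L_2)\) form, not \(N(t)+\ell+(n-1)Q\), because \(\mathsf S(t)=nO(t-L_2)\).
\item The counters then give exactly \(Q(\mathsf S-A)=v+w-u\). Hence \(u=(v+w)\bmod Q\) and \(\mathit{IP}_1=s-v+Q\min\{\lfloor(v+w)/Q\rfloor,0\}\).
\item With \((\kappa,d)\) fixed, \(w\) is fixed, and \(v\) runs through all residues as \(u\) varies.
\item If \(w\ge0\), then \(\mathit{IP}_1=s-v\). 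The surplus \(v+w-((v+w)\bmod Q)\) has conditional mean \(w\), because \(v\) and \((v+w)\bmod Q\) are both uniform.
\item If \(w<0\), then \(\mathit{IP}_1=\mathit{IL}_2=s+w-((v+w)\bmod Q)\), which is distributed as \(s-w^--V_Q\).
\end{enumerate}
The cost is \(\E G_0(\mathit{IP}_1)\) plus a term linear in the surplus. So this conditional law of \(\mathit{IP}_1\) and the conditional mean of the surplus are all you need, and averaging over \(j\) and \(D_2\) gives \eqref{eq:app-exact-cost}.
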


\proof{Proof.} Conditional on \(D_2=d=N(t)-N(t-L_2)\), the residue \(U=N(t)\bmod nQ\) is uniform: the stationary residue at \(t-L_2\) is uniform and independent of the later increment \(d\). Put \(w'=\ell+(n-1)Q-d\). Reduce \(U+w'\) modulo \(nQ\), then divide that remainder by \(Q\): write \((U+w')\bmod nQ=\kappa Q+u\), where \(0\le \kappa<n\) and \(0\le u<Q\). The quotient \(\kappa\) identifies one of the \(n\) sublots, and the remainder \(u\) identifies one of its \(Q\) positions. Conditional on \(d\), these indices are independent and uniform. Set \(j=n-1-\kappa\) to index the sublots in reverse order, and let \(w=\ell+jQ-d\) be the corresponding reorder-point offset after lead-time demand. Finally, \(v=U\bmod Q\) is the downstream cycle residue. The counter identities give
\[
 QA=N-v,\qquad Q\mathsf S=N+w'-((U+w')\bmod nQ)=N+w'-(\kappa Q+u),\qquad w'-\kappa Q=w.
\]
Hence, \(v+w=Q(\mathsf S-A)+u\) and \(\mathit{IP}_1=s-v+Q\min\{\lfloor(v+w)/Q\rfloor,0\}\). For fixed \(\kappa\) and \(d\), varying \(u\) makes \(v\) run through every downstream residue, while \(w\) remains fixed. If \(w\ge0\), the floor is nonnegative, \(\mathit{IP}_1=s-v\), and the average of \(Q(\mathsf S-A)\) is \(w\). Indeed, both \(v\) and \((v+w)\bmod Q\) are uniform on \(\{0,\ldots,Q-1\}\), so their equal means cancel in \(Q\lfloor(v+w)/Q\rfloor=v+w-((v+w)\bmod Q)\). The conditional cost is, therefore, \(\mathcal L_QG_0(s)+h_2w\). If \(w<0\), the floor is nonpositive, and \(\mathit{IP}_1=\mathit{IL}_2=s+w-((v+w)\bmod Q)\). The conditional cost is \(\mathcal L_QG_0(s+w)\). Averaging over \(j\) and \(D_2\) gives the inventory term in \eqref{eq:app-exact-cost}.

The request and upstream-order counters have expected rates \(\lambda/Q\) and \(\lambda/(nQ)\). Since \(A-M\) is stationary with finite mean, matched lots also have expected rate \(\lambda/Q\). Charging each matched lot and each upstream lot gives the two setup terms. Every positive dispatch contains at least one lot, so its single shipment charge is bounded by the sum of its lot charges on each sample path.\Halmos\endproof

\begin{lemma}[{\sc Admissibility of Finite Controls}]\label{lem:rnq-admissible}
Every finite integer construction above satisfies \eqref{eq:app-stability} and the unfinished-path condition.
\end{lemma}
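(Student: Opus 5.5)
The plan is to verify each stability requirement in \eqref{eq:app-stability} directly from the counter representation \eqref{eq:app-counters}--\eqref{eq:app-counter-states}. The key fact is that every state quantity is a deterministic, bounded-offset function of the demand counter \(N\) evaluated at finitely many shifted times. First, bound the counters against \(N\). From the floors, \(|QA(t)-N(t)|\le Q\) and \(|nQO(t)-N(t)-\ell|\le nQ+|\ell|\). Since \(\mathsf S(t)=nO(t-L_2)\), it follows that \(|Q\mathsf S(t)-N(t)|\le C+D(t-L_2,t]\), where \(C\) depends only on \((Q,n,\ell)\). Because \(QM=\min\{QA,Q\mathsf S\}\), the same form of bound holds for \(QM\). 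Next, \(U_2=nQO\) and \(U_1=QM\), up to the initialization constants \(x_i\), and customer deliveries satisfy \(U_0(T)=N(T)-\mathsf B(T)+\mathsf B(0)\).

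With these bounds in hand, each expectation reduces to Poisson increments over windows of fixed length \(L_1\), \(L_2\), or \(L_\Sigma\), plus constants. Every such term has a finite first moment that does not depend on \(T\).

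\begin{itemize}
\item \emph{Rates.} From the bounds above, \(\E U_j(T)/T\to\lambda\).
\item \emph{Net inventories and pipelines.} \(|\mathit{IL}_i(T)|\le |s|+C+D(T-L_1-L_2,T]\), and the pipelines satisfy \(P_i(T)\le C+D(T-L_i,T]\). Both have bounded expectations, so dividing by \(T\) sends them to zero.
\item \emph{Boundary cost.} The cost rate \(c_I\) is linear in \(I_1,I_2,\mathsf B\), each of which is dominated by \(|s|+C+D\) over a window of length at most \(L_\Sigma\). Hence \(\E\int_T^{T+L_\Sigma}c_I\,dt\) is bounded uniformly in \(T\).
\item \emph{Late dispatches.} \(\E[U_i(T+L_\Sigma)-U_i(T)]\le \lambda L_\Sigma+2C+\) bounded window terms.
\end{itemize}

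The same bounds give finite expected cost on every finite horizon.

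For the unfinished-path condition, the plan is to invoke the sufficient criterion stated above: a stationary inventory-position process \(X\) with \(\E|X(0)|<\infty\) satisfies it. The uniform randomization of \(N(0)\) over \(\{0,\ldots,nQ-1\}\), combined with the two-sided Poisson counter, makes the joint residue process and the lead-time increments stationary. Each position is then a stationary process: \(\mathit{IP}_2=s-N+nQO\) is a function of \(N\bmod nQ\), and \(\mathit{IP}_1\) is a function of that residue and \(D(t-L_2,t]\). Both have integrable absolute values by the bounds already obtained, so the criterion applies to each echelon position.

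The step I expect to be the main obstacle is verifying stationarity of \(\mathit{IP}_1\) rigorously. The difficulty is that \(M\) involves the lagged counter \(\mathsf S\), so I would argue it through the shift-invariance of the pair \((N(t)\bmod nQ,\ N\text{-increments})\), following the proof of Proposition~\ref{prop:app-policy-cost}. As a fallback I would not rely on stationarity at all: the range-based criterion needs only \(\E\max_{j\le n}|X(j)|=o(n)\). That follows from the uniform domination \(|X(j)|\le |s|+C+D(j-L_\Sigma,j]\) by identically distributed integrable variables, together with the truncation argument already given for such sequences.

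The remaining case is a noncanonical compatible initialization, meaning a fixed settled initial state. There the counters differ from the stationary construction only by finitely many initial pipeline lots and pending requests with finite first moments. These add \(O(1)\) in expectation to every bound above, so all the limits are unchanged.
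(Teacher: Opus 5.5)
Your proof is correct. The stability part follows the paper's argument: levels, pending lots, and pipelines are dominated by constants plus Poisson demand over fixed windows. The counter rates give the dispatch rates, and \(U_0(T)=N(T)-N(0)-\mathsf B(T)+\mathsf B(0)\) gives the delivery rate.

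There is one small slip. \(P_1(T)\le C+D(T-L_1,T]\) fails in general: while requests wait, \(M=\mathsf S\) follows the lagged upstream counter. Hence \(Q[M(T)-M(T-L_1)]\) is driven by demand in \((T-L_\Sigma,T-L_2]\). The correct domination uses the window \((T-L_\Sigma,T]\), which changes nothing.

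The real difference is in the unfinished-path condition. You invoke the general criterion of Appendix~\ref{app:admissibility}: a stationary position with integrable marginal, handled by first-moment truncation. This is valid, because \(\mathit{IP}_2\) is a function of \(N\bmod nQ\), and \(\mathit{IP}_1\) is a function of that residue and \(D(t-L_2,t]\). This pair is shift-invariant since the uniform \(N(0)\) is independent of the two-sided Poisson process.

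The paper bounds the ranges directly instead. The Stage-2 chronological path stays in the \(nQ+1\) states \(R_2,\ldots,R_2+nQ\), so \(r_{2,T}\le nQ\) deterministically. Also \(s-Q+1-[D(t-L_2,t]-\ell]^+\le\mathit{IP}_1(t)\le s\), and a second-moment bound on the maximum of window demands gives \(O(\sqrt T)\).

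Each route buys something different. The paper's needs no stationarity, gives explicit rates, and transfers verbatim to any fixed initial state. Yours reuses an existing lemma and gives only \(o(T)\); for non-stationary initializations you need your domination fallback.

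Note also that the paper's proof does more than the displayed statement. It shows that with an arbitrary fixed initial demand residue, expected inventory costs converge to their stationary values: the residue mod \(nQ\) mixes to uniform, and \(c_I\) is uniformly integrable once the initial lead-time windows clear. Lemmas~\ref{lem:rnq-remainder} and~\ref{lem:imp-classical-equality} rely on that part. Your remark on noncanonical initializations preserves only the stability limits, not this cost identification.
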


\proof{Proof.} Inventory levels, pending lots, and pipelines are bounded in absolute value by fixed constants plus demands in fixed lead-time windows. They have finite second moments in stationarity. Terminal expectations and fixed-window costs are consequently \(O(1)\). The counter rates give the two dispatch rates; \(U_0(T)=N(T)-N(0)-\mathsf B(T)+\mathsf B(0)\) gives the customer-delivery rate.

A loop-erased integer path has distinct states, so its number of surviving demand transitions is at most its state range. The settled Stage-2 position occupies the \(nQ\) states \(R_2+1,\ldots,R_2+nQ\). Immediately before an upstream order, a demand also takes it to \(R_2\). Thus, the full chronological path takes values in \(nQ+1\) consecutive states and has range at most \(nQ\), giving \(r_{2,T}\le nQ\). For Stage~1, \eqref{eq:app-counters} gives \(s-Q+1-[N(t)-N(t-L_2)-\ell]^+ \le\mathit{IP}_1(t)\le s\). When \(M=A\), the lower bound follows from the downstream residue. When \(M=\mathsf S\), the inequality \(Q\mathsf S(t)>N(t-L_2)+\ell-Q\) gives \(\mathit{IP}_1(t)>s-Q-[N(t)-N(t-L_2)-\ell]\). Since the position is integer, this implies the stated lower bound. A moving window of length \(L_2>0\) intersects at most two intervals in a partition into intervals of that length. If their demands are \(W_1,\ldots,W_J\), where \(J=O(T)\), then \(\E\max_jW_j\le(\sum_j\E W_j^2)^{1/2}=O(\sqrt T)\). The Stage-1 range, including intermediate demand states, therefore, has expectation \(o(T)\). For \(L_2=0\), it is bounded.

An arbitrary fixed initial remainder of the common demand counter, with the same counters \eqref{eq:app-counters} and their compatible history, gives the same long-run cost. This statement keeps the relative timing of the two counters fixed. The Poisson counter modulo \(nQ\) converges to uniform: for \(k=1,\ldots,nQ-1\), the modulus of the expected complex exponential is \(\exp\{\lambda t[\cos(2\pi k/(nQ))-1]\}\), which approaches zero. Condition on the counter before the fixed lead-time windows. Future increments are independent, so expectations of bounded state costs converge to their stationary values. The state bounds above give a uniformly bounded second moment of \(c_I(t)\) once the initial lead-time histories have cleared. For any \(a>0\), \(\E[c_I(t)\mathbf1_{\{c_I(t)>a\}}]\le a^{-1}\E[c_I(t)^2]\), so the expected contribution above \(a\) approaches zero uniformly as \(a\to\infty\). Truncating the cost and then removing the truncation, therefore, gives convergence of expected inventory costs to their stationary values. Setup rates and pathwise range estimates remain unchanged.\Halmos\endproof

\subsection{An Initial Remainder of Upstream Stock}\label{app:initial-remainder}

The four controls alone need not determine the cost if one also varies the remainder of the initial physical stock at Stage~2. That remainder persists because every upstream arrival adds \(nQ\) units and every downstream dispatch removes a multiple of \(Q\). The following comparison shows why the canonical convention suffices when the reorder points are optimized.

\begin{lemma}[{\sc Removing an Upstream Remainder}]\label{lem:rnq-remainder}
Fix integer classical controls \((R_1,R_2,Q,n)\) and a compatible initialization. Let \(\rho\in\{0,\ldots,Q-1\}\) be the remainder, modulo \(Q\), of the physical stock held at Stage~2. Removing \(\rho\) units from that stock and replacing \(R_2\) by \(R_2-\rho\) gives a feasible classical rule with zero upstream remainder. Every request, order, dispatch, and customer delivery occurs at the same epoch and has the same quantity. Under either setup convention, its long-run cost is lower by \(h_2\rho\). Consequently, allowing these additional initialization remainders leaves both optimized classical cost infima unchanged.
\end{lemma}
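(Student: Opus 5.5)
The plan is a pathwise coupling argument. Fix the controls \((R_1,R_2,Q,n)\) and the compatible initialization, and let \(\rho\) be the remainder of the initial upstream physical stock \(W=\mathit{IL}_2-\mathit{IP}_1\) modulo \(Q\). Define the modified system on the same demand sample path. It has the same Stage-1 position \(\mathit{IP}_1\), the same pending requests \(\beta\), and the same Stage-1 pipeline. The upstream stock is reduced to \(W-\rho\), which is a nonnegative multiple of \(Q\) and hence canonical. Consequently \(\mathit{IL}_2\), and therefore \(\mathit{IP}_2\), drop by \(\rho\), and \(R_2\) is replaced by \(R_2-\rho\). I will first check that this is again a compatible initialization. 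The shifted Stage-2 position lies in \(\{R_2-\rho+1,\ldots,R_2-\rho+nQ\}\), the pipelines are unchanged, and \(W-\rho\ge0\). Finally, \(\beta>0\) required \(W<Q\), which forces \(W=\rho\) and hence new stock \(0<Q\).

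Next I will show by induction over event epochs, using the ordering of coincident events in Appendix~\ref{app:admissibility}, that every action coincides in the two systems. Upstream orders are triggered when \(\mathit{IP}_2\) reaches the reorder point. Both \(\mathit{IP}_2\) and the reorder point are shifted by \(-\rho\), and demand moves them identically, so orders occur at the same epochs with the same quantity \(nQ\). Upstream receipts therefore coincide. A downstream request depends only on the nominal Stage-1 position, which is unchanged. A pending request is filled when upstream stock is at least \(Q\), and the number of lots released is \(\lfloor W_t/Q\rfloor\) capped by the pending count. Here \(W_t\) evolves by adding \(nQ\) at receipts and subtracting multiples of \(Q\) at dispatches, so \(W_t\equiv\rho\pmod Q\) is invariant and the modified stock equals \(W_t-\rho\). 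Since \(\lfloor (W_t-\rho)/Q\rfloor=\lfloor W_t/Q\rfloor\), matching decisions, dispatches, Stage-1 inventory, backlog, and customer deliveries are identical. This floor identity is the main point to verify carefully, including the zero-lead-time cascades at a single epoch. I expect handling those cascades to be the only delicate step; I would treat them by iterating the same invariant over the finitely many consequences within an epoch.

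Costs follow directly. Setup counts agree under either convention because dispatches and lots coincide pathwise. In \eqref{eq:instant-cost}, \(I_1\) and \(\mathsf B\) agree, while \(I_2=\mathit{IL}_2+\mathsf B\) is lower by exactly \(\rho\) at all times. The long-run cost therefore drops by exactly \(h_2\rho\). Admissibility transfers because every process differs by a constant, and Lemma~\ref{lem:rnq-admissible} applies. Since each rule with a remainder is dominated by a canonical rule, and \(h_2\rho\ge0\), the infima over \(\mathcal P_{\RnQ}\) and over canonical initializations coincide, for both \(\Clot\) and \(\Csh\).
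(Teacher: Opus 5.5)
Your coupling is the paper's argument. You remove $\rho$ units of Stage-2 stock, shift $R_2$, $\mathit{IL}_2$ and $\mathit{IP}_2$ by $-\rho$, and keep the Stage-1 state, pipelines and pending requests. You then propagate the invariant $W_t\equiv\rho \pmod Q$, together with $\lfloor (W_t-\rho)/Q\rfloor=\lfloor W_t/Q\rfloor$, through the event order. All actions therefore coincide, shipment and lot counts are identical, and $c_I'=c_I-h_2\rho$ on every horizon. Your check that the shifted state is again compatible is correct, including the observation that $\beta>0$ forces $W=\rho$. Your domination argument also correctly shows that the infimum over $\mathcal P_{\RnQ}$ equals the infimum over zero-remainder compatible initializations.

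The paper goes further and identifies what that common infimum is, and your proposal skips this step. Proposition~\ref{prop:rnq-account-main} and Lemma~\ref{lem:rnq-admissible} are proved only for the common-counter construction \eqref{eq:app-counters}. An arbitrary compatible zero-remainder initialization is therefore not yet known to have the closed-form cost. For the same reason, your appeal to Lemma~\ref{lem:rnq-admissible} for the transformed rule is not justified as stated; the constant-shift observation alone is what transfers admissibility from the original rule.

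The paper fills this with a representation step:
\begin{itemize}
\item Taking $N(0)=(s-\mathit{IP}_2(0))\bmod nQ$, with $s=R_1+Q$, makes the counter construction reproduce $\mathit{IP}_2(0)$ and $\widehat{\mathit{IP}}_1(0)$; this uses $\mathit{IP}_2-\widehat{\mathit{IP}}_1\in Q\mathbb Z$, which needs the zero remainder.
\item The settled identity $\mathit{IP}_1=\min\{\widehat{\mathit{IP}}_1,\mathit{IL}_2\}$ forces the matching $M=\min\{A,\mathsf S\}$.
\item Differences in the initial pipelines disappear after time $L_1+L_2$.
\end{itemize}
This gives the explicit cost \eqref{eq:app-remainder-cost} for every initialization with remainder $\rho$. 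It follows that the optimized cost over $\mathcal P_{\RnQ}$ equals the infimum of the formula in Proposition~\ref{prop:rnq-account-main} over the four controls, which is how the lemma is used in Section~\ref{sec:policy}. Your shorter route proves the comparison between the two initialization classes, but it needs this step to reach the closed-form conclusion.
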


\proof{Proof.} Recall that \(W=\mathit{IL}_2-\mathit{IP}_1\) is the physical stock held at Stage~2. It is nonnegative and changes only by multiples of \(Q\), so \(W\ge\rho\) at every event and between events. Remove \(\rho\) units from the initial upstream stock, retain all initial pipelines and pending requests, and use the same demand realization. The proposed transformed states are \(\mathit{IP}'_1=\mathit{IP}_1\), \(\mathit{IL}'_1=\mathit{IL}_1\), \(\mathit{IP}'_2=\mathit{IP}_2-\rho\), and \(\mathit{IL}'_2=\mathit{IL}_2-\rho\). At Stage~2, subtracting \(\rho\) from both its position and its reorder point leaves every upstream-order decision unchanged. At Stage~1, the position and pending requests are unchanged, and the number of available complete lots is preserved because \(\lfloor(W-\rho)/Q\rfloor=\lfloor W/Q\rfloor\). Thus, each complete-lot matching decision is unchanged. This verifies the transformed states by induction over events. It also covers simultaneous receipts and requests under the specified event order. Each common upstream receipt adds the same multiple of \(Q\) to the two upstream stocks, and each common matched dispatch subtracts the same multiple of \(Q\). Their difference remains \(\rho\), and their numbers of available complete lots remain equal before the next decision. All shipment quantities, lead times, and pipelines coincide, as do customer fulfillment and backlog. The only physical difference is the removal of \(\rho\) units of upstream stock.

The transformed inventory cost is, therefore, \(c'_I=c_I-h_2\rho\) at every time, while both shipment counts and lot counts coincide. The balance and boundary conditions are preserved; shifting the Stage-2 inventory-position path by a constant also preserves all its repeated states and unfinished demand transitions. Hence, admissibility is preserved, and the claimed cost difference follows on every horizon before passage to the long-run limit.

We next verify that every compatible zero-remainder initialization is represented by the common counter. Write its upstream reorder point as \(\bar R_2\), put \(s=R_1+Q\), and let \(\beta\) be the number of pending downstream requests. Its nominal downstream position is \(\widehat{\mathit{IP}}_1=\mathit{IP}_1+Q\beta\). Because \(W\) is a multiple of \(Q\) and the incoming Stage-2 pipeline \(P_2\) is a multiple of \(nQ\), \(\mathit{IP}_2-\widehat{\mathit{IP}}_1=W+P_2-Q\beta\in Q\mathbb Z\). The settled nominal Stage-1 position lies in \(\{s-Q+1,\ldots,s\}\), and the settled Stage-2 position lies in \(\{\bar R_2+1,\ldots,\bar R_2+nQ\}\). Set \(r=(s-\mathit{IP}_2(0))\bmod nQ\). The canonical construction with \(N(0)=r\) and controls \((R_1,\bar R_2,Q,n)\) gives the unique Stage-2 position in \(\{\bar R_2+1,\ldots,\bar R_2+nQ\}\) congruent to \(s-r\) modulo \(nQ\), so it equals \(\mathit{IP}_2(0)\). Its nominal downstream position is \(s-(r\bmod Q)\), the unique point in \(\{s-Q+1,\ldots,s\}\) with the required residue, so it equals \(\widehat{\mathit{IP}}_1(0)\). Under the same future demand, all nominal requests and upstream orders consequently agree. Their cumulative counters are \(A,O\) in \eqref{eq:app-counters}, with the compatible choices of their integer origins.

After matching, pending requests and a complete available downstream lot cannot coexist. If \(\beta>0\), then \(W<Q\); since \(W\) is a multiple of \(Q\), it is zero and \(\mathit{IP}_1=\mathit{IL}_2<\widehat{\mathit{IP}}_1\). If \(\beta=0\), then \(\mathit{IP}_1=\widehat{\mathit{IP}}_1\le\mathit{IL}_2\). Thus, \(\mathit{IP}_1=\min\{\widehat{\mathit{IP}}_1,\mathit{IL}_2\}\). Matching, therefore, fixes the number dispatched once nominal requests and available sublots are specified; with the counter origins above it is precisely \(M=\min\{A,\mathsf S\}\). FIFO changes no quantity or epoch because every pending request has size \(Q\). Any difference between compatible initial lead-time histories is transient. For \(t\ge L_2\), the lead-time identity expresses \(\mathit{IL}_2(t)\) through \(\mathit{IP}_2(t-L_2)\) and subsequent demand, so the two Stage-2 levels coincide. Hence, the actual Stage-1 positions also coincide. For \(t\ge L_1+L_2\), the Stage-1 levels coincide as well. All later dispatches coincide. Finite expected cost on this fixed initial interval makes its contribution approach zero after division by the horizon. Lemma~\ref{lem:rnq-admissible}, therefore, gives the canonical long-run cost for every such compatible initialization.

With upstream remainder \(\rho\), a compatible stationary realization is obtained by replacing \(\ell\) in the upstream-order counter by \(\ell-\rho\), retaining the downstream counter, and adding \(\rho\) to both Stage-2 states in \eqref{eq:app-counter-states}. Its settled Stage-2 position lies in \(\{R_2+1,\ldots,R_2+nQ\}\), and its upstream stock is \(\rho+Q(\mathsf S-M)\). The removal comparison and the coverage argument just given identify its long-run cost, and that of every compatible initialization with this fixed \(\rho\), as
\begin{equation}
 \Clot=\frac{k_1}{Q}+\frac{k_2}{nQ}
       +I_{n,Q}(R_1+Q,R_2-R_1-\rho)+h_2\rho.
 \label{eq:app-remainder-cost}
\end{equation}
Every integer value \(R_2-\rho\) is already among the canonical controls. The canonical class is included among the possible initializations, and the comparison maps every other initialization to one of its policies at weakly smaller cost. This proves the equality of the infima. A randomized initialization gives a mixture of the same comparisons and cannot reduce their infimum.\Halmos\endproof

\section{Finite-Horizon Allocation Lower Bounds}\label{app:lower-bounds}

\subsection{The Integer-Cycle Inequality}

For a bounded-below continuous loss \(F\), define
\[
 \mathcal C_F(k)=\inf_{m\in\mathbb N,\ s\in\mathbb R}
 \left\{\frac{k}{m}+\frac1m\sum_{j=0}^{m-1}F(s-j)\right\}.
\]
This definition also covers a one-sided loss and an unattained infimum.

The cycle argument below applies to the chronological path of an adaptive policy. Its decisions may depend on time, earlier demand, outstanding orders, and independent randomization. The loss immediately before a demand is predictable under all of these choices. Unit demand fixes the downward jump size, while physical shipments may have arbitrary positive integer sizes. Thus, we can assign the actual charge of one shipment to its entire upward jump for any of these decision rules. Allowing all real placements in \(\mathcal C_F\) can only lower the cycle infimum when the physical states are constrained to a particular integer lattice, so the resulting lower bound remains valid.

\begin{lemma}[{\sc Unit-Demand Cycle Bound}]\label{lem:lb-unit-stage}
Let \(\lambda>0\), let \(N\) be the demand process, and write \(X(t)=X(0)+U(t)-N(t)\), with \(U(0)=N(0)=0\), where \(X(0)\) is integer and \(U\) is an adapted, right-continuous, nondecreasing integer process with finitely many jumps on every finite interval. Future demand increments are independent of the information used by the policy. At coincident events, record the unit demand decrease before any dispatch increase. Let \(Z(T)\) be any count at least as large as the number of positive jumps of \(U\) in \((0,T]\), and let \(K\ge0\). For a bounded-below continuous \(F\), suppose the chronological loop erasure defined in Appendix~\ref{app:admissibility} leaves \(r_T\) demand decreases and satisfies the unfinished-path condition \(\E r_T=o(T)\). Then
\[
 \liminf_{T\to\infty}\frac1T\E\left[\int_0^TF(X(t))\,dt+KZ(T)\right]
 \ge\mathcal C_F(\lambda K).
\]
If \(F\) is convex and grows without bound in both directions, the cycle infimum is attained.
\end{lemma}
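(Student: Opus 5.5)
The plan is to convert the time integral into a sum over demand epochs, decompose the chronological position path into completed cycles by loop erasure, and bound each completed cycle by \(\mathcal C_F(\lambda K)\) per demand.

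\emph{Step 1 (time integral to demand sum).} By PASTA together with predictability of \(X(t-)\) with respect to the demand filtration, I will show
\[
\E\int_0^T F(X(t))\,dt=\lambda^{-1}\E\sum_{\tau\le T}F(X(\tau-)).
\]
Here the sum runs over demand epochs \(\tau\) and \(X(\tau-)\) is the state just before the unit decrease. The precise form is \(\E\int_0^T g(t)\,dN(t)=\lambda\E\int_0^T g(t)\,dt\) for predictable \(g\). Applying it needs \(F\) bounded below, so I will first truncate \(F\) from above at a level \(a\) and let \(a\to\infty\) by monotone convergence. Since \(F\) is only bounded below, the terms may be negative, so I will work with \(F-\inf F\ge0\) and add the constant back at the end. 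Along the path \(X(t)=X(\tau-)\) on left neighbourhoods, so the integrand is the left limit, which is predictable.

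\emph{Step 2 (loop erasure).} I will apply the chronological loop erasure of Appendix~\ref{app:admissibility} to the integer path through time \(T\), recording each demand decrease before coincident dispatch increases. Each erased segment is a closed loop that starts and ends at the same state. Within a loop, the downward unit steps visit every level between the loop's minimum and its starting height. Suppose a loop contains \(j\ge1\) demand decreases and \(u\) upward jumps. Net displacement is zero, so the up-jumps total \(j\) units.

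The key claim is that the multiset of pre-demand states in the loop can be grouped into at least \(u\) blocks of consecutive integers (runs of the form \(s,s-1,\dots,s-m+1\)), with total length \(j\). Given this claim, each block of length \(m\) starting at \(s\) satisfies
\[
\sum_{v<m}F(s-v)+K\ \ge\ m\,\mathcal C_F(\lambda K)\cdot\lambda^{-1}\cdot\lambda .
\]
More precisely, after scaling by \(\lambda^{-1}\) the block costs at least \(m\,\mathcal C_F(\lambda K)/\lambda\), where the setup \(K\) is charged to the up-jump that ends the block. Loops with a single up-jump are handled by the same statement with one block. For several up-jumps I will argue by induction on the number of up-jumps: cut the loop at its highest point, and use the fact that between consecutive up-jumps the path descends by unit steps through consecutive states. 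Each up-jump then ends one descending run. Fewer blocks than up-jumps only helps, since the extra setup charges are nonnegative. Demand decreases that remain on the retained path number \(r_T\). Their cost is at least \((\inf F)\,r_T/\lambda\), which is \(o(T)\) in expectation by the unfinished-path condition. Summing over loops and using \(Z(T)\ge\) the number of up-jumps gives
\[
\E\Big[\int_0^T F\,dt+KZ(T)\Big]\ \ge\ \lambda^{-1}\mathcal C_F(\lambda K)\,\E\big[N(T)-r_T\big]-o(T).
\]
Dividing by \(T\) and using \(\E N(T)=\lambda T\) gives the liminf bound.

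\emph{Step 3 (attainment).} Assume \(F\) is convex and grows without bound in both directions. Then \(\mathcal H_m(F)\to\infty\) as \(m\to\infty\), because the window must cover a growing set and convexity forces linear growth. So only finitely many \(m\) matter. For each such \(m\), the placement \(s\) ranges over a compact set, and continuity of \(F\) gives a minimizer.

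\textbf{Main obstacle.} The hard step is the combinatorial claim in Step 2 that each loop's demand states split into descending consecutive runs, one per up-jump. The difficulty is that up-jumps can overshoot and the loop can revisit levels, so the runs are not obviously disjoint blocks. My first attempt will be to pair each unit decrease from level \(x\) to \(x-1\) with the up-jump that last crossed that edge. This groups the decreases into runs attached to up-jumps, and each run is consecutive because the path descends one unit at a time between jumps.
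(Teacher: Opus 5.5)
Your outline matches the paper's proof at every step but one. Both use the compensator identity applied to $F-\inf F$, with truncation and monotone convergence. Both then apply chronological loop erasure, compare each loop with $\mathcal C_F(\lambda K)$, control the $r_T$ retained decreases with the unfinished-path condition, and prove attainment by a growth argument.

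The difference is the step you flag as the main obstacle, and it is not really an obstacle.

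\emph{The paper's route.} Chronological erasure always removes a suffix of a retained path whose states are distinct, so every erased loop is a simple cycle. Rotate it to start at its largest state $M$. It then descends by unit steps through new states, and its first up-jump must land in the already visited range $(M-m,M]$. Simplicity forces that landing to be the final return to $M$. Hence each erased cycle has exactly one up-jump, of size $m$, and its pre-demand states are $M,\dots,M-m+1$, so the bound is immediate.

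\emph{Your route.} It also works, for arbitrary closed loops and without induction. Read a loop cyclically from a state entered by an up-jump, for instance the maximum. It then alternates unit-descent runs and up-jumps. Each run is a block of consecutive integers closed by its own jump. Overlaps of levels between different runs do not matter, because each run is compared with $\mathcal C_F(\lambda K)$ separately.

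Your edge-crossing pairing, read cyclically, works as well. Up- and down-crossings of each edge alternate around a closed loop, so a jump from $y$ to $y+m$ receives exactly the decreases with pre-demand states $y+1,\dots,y+m$. Note that the consecutiveness there comes from the jump covering consecutive edges, not from descent between jumps.

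The paper's version is shorter because it exploits the erasure rule. Yours would survive any decomposition into closed loops.

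There are two slips to fix.
\begin{enumerate}[1.]
\item The key claim must say \emph{at most} $u$ blocks (exactly $u$, allowing empty runs). ``At least $u$'' is met trivially by singletons, since $u\le j$, and it does not let you charge one setup per block.
\item The displayed per-block inequality should read $\sum_{v<m}F(s-v)+\lambda K\ge m\,\mathcal C_F(\lambda K)$, equivalently $\lambda^{-1}\sum_{v<m}F(s-v)+K\ge\lambda^{-1}m\,\mathcal C_F(\lambda K)$. With $K$ in place of $\lambda K$ it can fail when $\lambda>1$. Your ``more precisely'' sentence already states it correctly.
\end{enumerate}
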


\proof{Proof.} Subtract \(F_*=\inf F\) and put \(F_0=F-F_*\ge0\). Poisson compensation gives \(\E\sum_{\tau\le T}F_0(X(\tau-)) =\lambda\E\int_0^TF_0(X(t))\,dt\). The identity first applies to bounded predictable integrands; the monotone convergence theorem gives this form. At a simultaneous demand and dispatch, record the demand transition first. Assign \(F_0(x)\) to each downward transition \(x\to x-1\) and assign \(k=\lambda K\) to each positive upward jump. Apply the chronological loop erasure defined in Appendix~\ref{app:admissibility}. Each removed cycle visits distinct states until its final return to its starting state; this is a simple cycle. The retained path has no repeated state and contains \(r_T\) downward transitions.

Each erased simple cycle contains one upward jump. To see this, begin at its largest state. The initial downward steps visit consecutive states. Any upward jump must land among those already visited states, since its endpoint cannot exceed the largest state of the cycle. In a simple cycle, the only permitted repeated state is the final return to the starting state. Thus, the upward jump closes the cycle at its largest state. A cycle with \(m\) demand transitions, therefore, consists of \(m\) consecutive unit decreases followed by one upward jump of size \(m\), and pays at least \(m\mathcal C_{F_0}(k)\). The costs assigned to the retained path are nonnegative because \(F_0\ge0\) and \(k\ge0\). Dropping those costs gives
\[
 \lambda\E\left[\int_0^TF_0(X(t))\,dt+KZ(T)\right]
 \ge\mathcal C_{F_0}(k)\{\lambda T-\E r_T\}.
\]
Divide by \(\lambda T\), pass to the lower limit, and restore \(F_*\). If the expectation is infinite, the same nonnegative inequalities apply.

For attainment, choose \(A>k+F_*+1\) and a bounded interval outside which \(F\ge A\). A unit-spaced cycle has only a bounded number of states inside that interval. Consequently, all sufficiently large cycles cost more than the cycle of size one at a minimizer of \(F\). Only finitely many cycle sizes remain; the continuity and growth at infinity of the cycle cost give an optimal placement for each.\Halmos\endproof

\subsection{Lead-Time Shifts and the Cost Split}

For an admissible policy \(\Pi\) and \(\lambda>0\), let \(I_T^\Pi=T^{-1}\E_\Pi\int_0^T c_I(t)\,dt\) be its expected average inventory cost over \([0,T]\). At setup coefficients \(a_1,a_2\ge0\), its total expected average cost is \(C_T^\Pi(a_1,a_2)=I_T^\Pi+(\lambda T)^{-1}\sum_{i=1}^2a_i\E_\Pi Z_i^{\rm sh}(T)\).

The time-shift estimate below applies to \(h_2\mathit{IL}_2\), which can be negative because \(\mathit{IL}_2\) is net inventory, and remains valid at zero cost rates. For any fixed \(0\le d\le L_\Sigma\), the flow balances imply, for \(0\le t\le d\),
\[
 |\mathit{IL}_2(T+t)|\le|\mathit{IL}_2(T)|
 +U_2(T+t-L_2)-U_2(T-L_2)+N(T+t)-N(T).
\]
The terminal-state and fixed-window dispatch conditions, therefore, give \(\E\int_T^{T+d}|\mathit{IL}_2(t)|dt=o(T)\). The corresponding initial integral is finite. Thus, shifting the integration interval by a fixed lead time changes the expected integral of \(h_2\mathit{IL}_2\) by \(o(T)\). This argument uses physical stability and remains applicable when a cost coefficient becomes zero.

Fix \((\eta,\zeta)\in[0,1]^2\). Split physical cost into \(c_1=h_1I_1+\eta h_2I_2+\zeta p\mathsf B\) and \(c_2=(1-\eta)h_2I_2+(1-\zeta)p\mathsf B\), and assign setup \(K_i\) to component \(i\). Both components are nonnegative. Define \(F_1^{\eta,\zeta}(x)=\E\phi_{h_1+\eta h_2,\zeta p}(x-D_1)\) and \(F_2^{\eta,\zeta}(x)=\E\phi_{(1-\eta)h_2,(1-\zeta)p}(x-D_1-D_2)\). To see the first component directly, use \(I_2=\mathit{IL}_2+\mathsf B\) to write \(c_1(t)=\phi_{h_1,\zeta p+\eta h_2}(\mathit{IL}_1(t)) +\eta h_2\mathit{IL}_2(t)\). Conditional on the information at \(t\), the first term at \(t+L_1\) has expectation \(\widetilde G(\mathit{IP}_1(t))\), where \(\widetilde G(x)=\E\phi_{h_1,\zeta p+\eta h_2}(x-D_1)\). Orders issued during this lead time cannot enter \(\mathit{IL}_1(t+L_1)\), which is the reason the conditioning remains valid for an adaptive policy. The possibly negative term \(\eta h_2\mathit{IL}_2\) is shifted in time using the estimate above. Material feasibility then gives \(\eta h_2\mathit{IL}_2+\widetilde G(\mathit{IP}_1) \ge\eta h_2\mathit{IP}_1+\widetilde G(\mathit{IP}_1) =\eta H_0+F_1^{\eta,\zeta}(\mathit{IP}_1)\). Integrate through \(T-L_1\), retain the setup count through that time, and apply Lemma~\ref{lem:lb-unit-stage} with \(X=\mathit{IP}_1\) and \(F=\eta H_0+F_1^{\eta,\zeta}\). The first component has lower limiting rate at least \(\eta H_0+\mathcal C_{F_1^{\eta,\zeta}}(k_1)\).

For the second component, define the auxiliary inventory level \(Z_t=\mathit{IP}_2(t)-[N(t+L_\Sigma)-N(t)]\). Apply the material inequality at \(t+L_2\): all units already dispatched downstream by that time are available within echelon~2. Subtract demand over the remaining interval of length \(L_1\). This gives
\begin{align*}
 \mathit{IL}_1(t+L_\Sigma)
 &=\mathit{IP}_1(t+L_2)-[N(t+L_\Sigma)-N(t+L_2)]\\*
 &\le\mathit{IL}_2(t+L_2)-[N(t+L_\Sigma)-N(t+L_2)]
 =Z_t.
\end{align*}
The supplier-dispatch balance similarly gives \(\mathit{IL}_1(t+L_\Sigma)\le Z_t\) and \(\mathit{IL}_2(t+L_\Sigma)=Z_t+U_2(t+L_1)-U_2(t)\ge Z_t\). Hence, \(\mathsf B(t+L_\Sigma)\ge Z_t^-\) and \(I_2(t+L_\Sigma)\ge Z_t^+\). Conditional on the information at \(t\), the increment in \(Z_t\) is independent Poisson demand over \(L_\Sigma\). The conditional allocated loss is at least \(F_2^{\eta,\zeta}(\mathit{IP}_2(t))\). Integrating through \(T-L_\Sigma\) and applying the cycle lemma with \(X=\mathit{IP}_2\) and \(F=F_2^{\eta,\zeta}\) gives the second lower limiting rate \(\mathcal C_{F_2^{\eta,\zeta}}(k_2)\). The discarded initial physical costs are nonnegative. The original two components have finite lower bounds, so their lower limits can be added. We have proved, for every admissible policy,
\begin{equation}
 \liminf_{T\to\infty}C_T^\Pi(k_1,k_2)
 \ge\eta H_0+\mathcal C_{F_1^{\eta,\zeta}}(k_1)+\mathcal C_{F_2^{\eta,\zeta}}(k_2).
 \label{eq:app-allocation-valid}
\end{equation}
Taking the allocation supremum gives \(\mathcal A(k_1,k_2)\le\OPTsh\). We allow zero allocated slopes and zero setups without requiring the convergence of either separate cost component.

\section{The Zero-Setup Value and Setup-Preserving Rescaling}\label{app:zero-setup}

Define the constrained cost
\begin{equation}
 \mathcal E_0(x)=\inf_{v\le x}\{G_0(v)+h_2(x-v)\}
 =h_2x+\inf_{v\le x}G_{\mathrm{adj}}(v).
 \label{eq:app-zero-envelope}
\end{equation}
It is finite, convex, and at least \(H_0\). Its growth is at most linear. These facts follow either from the convex constrained minimization or from the explicit forms below.

\begin{proposition}[{\sc Finite-Horizon Zero-Setup Benchmark}]\label{prop:app-zero-value}
For all nonnegative cost rates, the value \(V_0\) in \eqref{eq:lb-protected-zero-value} satisfies
\[
 V_0=\inf_{z\in\mathbb R}\E\mathcal E_0(z-D_2)
 =\left.\OPTsh\right|_{K_1=K_2=0},\qquad
 \liminf_{T\to\infty}I_T^\Pi\ge V_0
\]
for every admissible policy. Here the optimum is evaluated with both setup charges set to zero. In that system, finite integer unit-lot policies approach \(V_0\) arbitrarily closely.
\end{proposition}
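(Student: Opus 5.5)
Three claims need proof: the two expressions for \(V_0\) agree; every admissible policy has lower limiting inventory cost at least \(V_0\); and with zero setups, unit-lot policies approach \(V_0\). I will handle them in that order.

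\textbf{Equality of the two forms.} In \eqref{eq:lb-protected-zero-value}, fix the realization \(d\) of \(D_2\) and set \(x=r+y-d\). The inner cost is \(G_0(\min\{r,x\})+h_2(x-\min\{r,x\})\). Minimizing first over \(r\) for fixed \(x\) is not allowed, because \(r\) cannot depend on \(d\). Instead I will use the right side of \eqref{eq:app-zero-envelope}, namely \(h_2x+\inf_{v\le x}G_{\mathrm{adj}}(v)\).

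The function \(G_{\mathrm{adj}}\) is convex. Let \(r^*\) be one of its minimizers, or a limit along a minimizing sequence at a boundary. Then \(\inf_{v\le x}G_{\mathrm{adj}}(v)=G_{\mathrm{adj}}(\min\{r^*,x\})\), so the optimal unconstrained choice \(v=\min\{r^*,x\}\) is a fixed-target rule. Hence
\[
\inf_{r,y}\E[\cdot]=\inf_z\E\mathcal E_0(z-D_2),
\]
where \(z=r^*+y\). At zero-cost boundaries I will take limits along minimizing sequences.

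\textbf{Lower bound for arbitrary policies.} I will reuse the lead-time machinery of Appendix~\ref{app:lower-bounds}. By the time-shift estimate, the inventory cost equals \(\E[G_{\mathrm{adj}}(\mathit{IL}_1)+h_2\mathit{IL}_2]\) up to \(o(T)\). Evaluate the \(G_{\mathrm{adj}}\) term at \(t+L_1\) and condition on information at \(t\). This gives \(G_{\mathrm{adj}}(\mathit{IP}_1(t))\), with \(\mathit{IP}_1(t)\le\mathit{IL}_2(t)\) by the material constraint.

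Next write \(\mathit{IL}_2(t)=\mathit{IP}_2(t-L_2)-[N(t)-N(t-L_2)]\), again shifting \(h_2\mathit{IL}_2\) in time at cost \(o(T)\). Pointwise,
\[
h_2\mathit{IL}_2(t)+G_{\mathrm{adj}}(\mathit{IP}_1(t))\ge\mathcal E_0(\mathit{IL}_2(t)).
\]
Now condition on information at \(t-L_2\). The demand over \((t-L_2,t]\) is independent Poisson, so the conditional expectation is at least \(\E\mathcal E_0(\mathit{IP}_2(t-L_2)-D_2)\ge V_0\).

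Two points need care here. First, \(\mathit{IP}_1(t)\) may depend on demand in that window; this is harmless because the bound \(\mathcal E_0(\mathit{IL}_2)\) was applied pathwise before conditioning. Second, \(h_2\mathit{IL}_2\) can be negative, but \(\mathcal E_0\) has at most linear growth and the stability conditions \eqref{eq:app-stability} control the boundary terms. Integrating, dividing by \(T\), and taking \(\liminf\) gives \(\liminf_T I_T^\Pi\ge V_0\).

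\textbf{Attainability.} I will use the echelon base-stock unit-lot rule. The upstream stage keeps \(\mathit{IP}_2\equiv z\) by ordering one unit per demand. The downstream stage raises \(\mathit{IP}_1\) to \(\min\{r,\mathit{IL}_2\}\) with unit shipments.

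In stationarity, \(\mathit{IL}_2(t)=z-D_2\). The downstream position is \(\min\{r,z-D_2\}\), which is exactly the integer version of the \(r,y\) form, so the cost equals \(\E[G_0(r-(y-D_2)^-)+h_2(y-D_2)^+]\). Admissibility follows as in Lemma~\ref{lem:rnq-admissible} with \(Q=n=1\). In the positive-cost case, integer optimizers exist because the functions are piecewise linear between integers. At boundaries, integer \(r,z\) approximate the infimum. Combining the lower bound with attainability gives \(V_0=\OPTsh\) at \(K_1=K_2=0\).

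\textbf{Main obstacle.} The hardest step is the adaptive-policy conditioning in the lower bound. Dispatch timing may correlate \(\mathit{IP}_1\) with within-window demand, and the bound must avoid requiring convergence of separate cost components. I will keep every inequality pathwise until the last conditioning step, and then use the finite-horizon estimates from Appendix~\ref{app:lower-bounds} to absorb the edge terms.
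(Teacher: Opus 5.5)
Your proposal is correct and follows essentially the paper's route: a fixed cap minimizing \(G_{\mathrm{adj}}\) identifies \(V_0\) with \(\inf_z\E\mathcal E_0(z-D_2)\). The lower bound comes from the Stage-1 lead-time shift, the pathwise bound \(\mathit{IP}_1\le\mathit{IL}_2\) giving \(\mathcal E_0(\mathit{IL}_2)\), and conditioning over the Stage-2 lead time, and attainment uses the \(Q=n=1\) policy with integer placements and Lemma~\ref{lem:rnq-admissible}. One tightening: when \(h_1=0\) your limiting cap is \(r^*=+\infty\), whereas the paper avoids the limit by noting that the finite integer cap \(r=z\) already implements \(v=x\) because \(D_2\ge0\). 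Also, your \(G_{\mathrm{adj}}(\mathit{IL}_1)\) should read \(\phi_{h_1,p+h_2}(\mathit{IL}_1)\).
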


\proof{Proof.} The Stage-1 lead-time shift and the preceding time-shift estimate give, with \(T'=T-L_1\),
\[
 T I_T^\Pi\ge\E\int_0^{T'}
 [G_{\mathrm{adj}}(\mathit{IP}_1(t))+h_2\mathit{IL}_2(t)]\,dt-o(T)
 \ge\E\int_0^{T'}\mathcal E_0(\mathit{IL}_2(t))\,dt-o(T).
\]
The second inequality uses \(\mathit{IP}_1\le\mathit{IL}_2\). Since \(\mathcal E_0\ge0\), discard its initial interval of length \(L_2\). The Stage-2 lead-time identity then gives conditional cost \(\E\mathcal E_0(\mathit{IP}_2(t)-D_2)\), which is at least \(\inf_z\E\mathcal E_0(z-D_2)\) at every time. Division by \(T\) proves the finite-horizon lower bound.

If \(h_1>0\) and \(p+h_2>0\), the function \(G_{\mathrm{adj}}\) grows without bound in both directions; choose an integer minimizer \(r_0\). The constrained minimizer in \eqref{eq:app-zero-envelope} is \(\min\{r_0,x\}\), the smaller of the target \(r_0\) and the available level \(x\). We call the target \(r_0\) the cap. Consequently, with \(z=r_0+y\), \(\mathcal E_0(z-D_2) =G_0(r_0-(y-D_2)^-)+h_2(y-D_2)^+\). For any cap \(r\), the choice \(v=\min\{r,x\}\) is feasible in the constrained problem and has at least this cost. This proves the equality with the two-variable definition of \(V_0\).

If \(h_1=0\), \(G_{\mathrm{adj}}\) is nonincreasing and the constrained choice is \(v=x\). For each finite \(z\), the nonnegativity of \(D_2\) permits the finite cap \(r=z\), which implements this choice for every \(x=z-D_2\). If \(p+h_2=0<h_1\), then \(p=h_2=0\); the cap \(r=0\) gives zero cost because \(D_1\ge0\). These cases cover every remaining boundary, including all zero rates.

The functions just described are linear between successive integers. Their expectations over integer \(D_2\) retain that property, so restricting finite placements to integers does not change the infimum. Choose an integer placement within any prescribed tolerance, and the corresponding integer cap. Equation~\eqref{eq:app-exact-cost}, with \(Q=n=1\), realizes exactly that value when setups are zero. Lemma~\ref{lem:rnq-admissible} establishes admissibility. Thus, the policy infimum is at most \(V_0\), establishing the equality. When all relevant slopes are positive, the remaining scalar loss grows without bound in both directions, and the minimum is attained.\Halmos\endproof

Finally, for each \(0<u\le1\), the same physical policy satisfies the exact finite-horizon identity \(C_T^\Pi(k_1,k_2)=(1-u)I_T^\Pi +uC_T^\Pi(k_1/u,k_2/u)\). Taking lower limits, using Proposition~\ref{prop:app-zero-value} and \eqref{eq:app-allocation-valid}, and then taking the policy infimum gives \((1-u)V_0+u\mathcal A(k_1/u,k_2/u)\le\OPTsh\). The supremum over \(u\) is \(\LBsp\). Here SP stands for setup-preserving: multiplying by \(u\) restores the original setup coefficients after their rescaling by \(1/u\). The choice \(u=1\) gives \(\mathcal A\le\LBsp\); the nonnegativity of \(\mathcal A\) and \(u\downarrow0\) also give \(V_0\le\LBsp\). Removing lot labels from any fixed-lot policy preserves its material flows and weakly decreases setup charges. Therefore, \(\LBsp\le\OPTsh\le\OPTlot\), and \(\Csh\le\Clot\) for each constructed policy. Every comparison was established before taking long-run policy infima; no interchange of component limits is required.
\section{The Joint-Cycle Comparison}\label{app:joint-cycles}

This appendix proves the comparison used in Theorem~\ref{thm:five-thirds}. We first choose the two cycle sizes independently in an auxiliary inventory calculation. We then turn each such choice into a finite collection of feasible classical policies, one of which costs no more than that calculation. The remaining argument compares larger and smaller integer cycles and combines two valid lower bounds. This separation allows each stage to have its own preferred ordering frequency while ensuring that the implemented upstream order contains a whole number of downstream lots.

Throughout the first four subsections, assume \(\lambda>0\) and \(h_1,h_2,p>0\). Setup coefficients \(k_i=\lambda K_i\) may be zero, and lead times are finite and nonnegative. Appendix~\ref{app:joint-boundary} treats zero holding or backlog costs and zero demand. Let \(D_1,D_2\) be independent Poisson variables with means \(\lambda L_1,\lambda L_2\). Set \(a=h_1+h_2\), \(H_0=h_2\E D_1\), and
\begin{equation}
 P(x)=\E\ph_{a,p}(x-D_1),\qquad
 F_{\mathrm{up}}(x)=\E\ph_{h_2,p}(x-D_2),\qquad G_0(x)=H_0+P(x).
 \label{appjc:losses}
\end{equation}
For a positive integer \(j\), let \(J_j\) be uniform on \(\{0,\ldots,j-1\}\). Copies used for different cycles are independent of one another and of demand. Recall
\begin{equation}
 \mathcal H_j(F)=\inf_{y\in\mathbb R}\E F(y-J_j),\qquad
 \mathcal C_F(k)=\inf_{j\in\mathbb N}\{k/j+\mathcal H_j(F)\},
 \quad B_1(k)=H_0+\mathcal C_P(k),\quad B_2(k)=\mathcal C_{F_{\mathrm{up}}}(k).
 \label{appjc:single-cycle}
\end{equation}
Here \(k/j\) is the setup rate associated with a lot of \(j\) units, and \(\mathcal H_j\) chooses the placement of its inventory positions. We use \(V_0\) for the zero-setup value in \eqref{eq:lb-protected-zero-value}. Its interpretation and finite-horizon lower-bound property are established in Appendix~\ref{app:zero-setup}.

\subsection{A Common Inventory Calculation for Both Cycles}

Let \(\Xi_1,\Xi_2\) be independent random variables with finite support, independent of demand. Define
\begin{equation}
 \mathscr V(\Xi_1,\Xi_2)=\inf_{z,x}\E\left[
 H_0+\ph_{a,p}(x-D_1-\Xi_1)+h_2(z-D_2-\Xi_2-x)\right],
 \qquad x=x(D_2,\Xi_2)\le z-D_2-\Xi_2.
 \label{appjc:joint-value}
\end{equation}
The level \(z\) is deterministic. The choice of \(x\) can respond to \(D_2\) and \(\Xi_2\); it cannot observe \(D_1\) or \(\Xi_1\). The infimum is over measurable choices with \(\E|x|<\infty\). The constraint represents material availability, and its nonnegative difference is charged at rate \(h_2\). All inventory and backlog terms remain in the calculation. The flexibility in \(x\) is used to establish a useful intermediate value; the next subsection provides actual policy controls.

For a fixed distribution of \(\Xi_1\), put \(G_{\Xi_1}(v)=H_0+\E_{D_1,\Xi_1}\ph_{a,p}(v-D_1-\Xi_1)\), where the expectation averages over both \(D_1\) and \(\Xi_1\). The slopes of \(G_{\Xi_1}(v)-h_2v\) approach \(-(p+h_2)\) on the left and \(h_1\) on the right. This convex function, therefore, grows without bound in either direction and has a deterministic minimizer, denoted by \(r_{\Xi_1}\), chosen from the distribution of \(\Xi_1\) before its realization. Given \(y=z-D_2-\Xi_2\), minimizing \(G_{\Xi_1}(x)-h_2x+h_2y\) over \(x\le y\) in \eqref{appjc:joint-value} gives the choice capped at \(r_{\Xi_1}\)
\begin{equation}
 x=\min\{r_{\Xi_1},y\}.
 \label{appjc:cap-choice}
\end{equation}
Indeed, the unconstrained minimum is feasible when \(y\ge r_{\Xi_1}\). When \(y<r_{\Xi_1}\), the convexity of the objective makes it nonincreasing up to \(r_{\Xi_1}\), so the largest feasible choice is optimal. Formula~\eqref{appjc:cap-choice} is integrable and depends on \((D_2,\Xi_2)\) only through their sum. Thus, the separate observations permitted in \eqref{appjc:joint-value} do not change its value. In particular,
\begin{equation}
 \mathscr V(\Xi_1,\Xi_2)=\inf_z\E\left[
 G_{\Xi_1}\bigl(r_{\Xi_1}-(r_{\Xi_1}-z+D_2+\Xi_2)^+\bigr)
 +h_2(z-D_2-\Xi_2-r_{\Xi_1})^+\right].
 \label{appjc:cap-value}
\end{equation}
At \(\Xi_1=\Xi_2=0\), minimizing first over the constrained downstream choice gives the zero-setup expression in \eqref{eq:lb-protected-zero-value}. Hence, \(\mathscr V(0,0)=V_0\).

\begin{lemma}[{\sc Constant Shifts and Simultaneous Scaling}]\label{appjc:scale-lemma}
For constants \(c_1,c_2\), \(\mathscr V(\Xi_1+c_1,\Xi_2+c_2)=\mathscr V(\Xi_1,\Xi_2)\). Moreover, \(t\mapsto\mathscr V(t\Xi_1,t\Xi_2)\) is convex for \(t\ge0\), and
\begin{equation}
 \mathscr V(\Xi_1,\Xi_2)\ge V_0,\qquad
 \mathscr V(2\Xi_1,2\Xi_2)\ge2\mathscr V(\Xi_1,\Xi_2)-V_0.
 \label{appjc:inventory-doubling}
\end{equation}
\end{lemma}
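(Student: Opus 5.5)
The plan is to prove the four claims in the order stated. Throughout, I use one reformulation: the infimum in \eqref{appjc:joint-value} does not change if the downstream choice \(x\) may depend measurably on \((D_2,\Xi_2)\) itself rather than only on \((D_2,\Xi_2)\) after scaling. This holds because the cap choice \eqref{appjc:cap-choice} is optimal over this larger class and depends only on \(y=z-D_2-\Xi_2\). The reformulation lets me compare \(\mathscr V(t\Xi_1,t\Xi_2)\) for different \(t\), including \(t=0\), using a single class of controls.

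\emph{Shift invariance.} I will use an explicit bijection of feasible controls. Given \((z,x)\) feasible for \((\Xi_1,\Xi_2)\), set \(z'=z+c_1+c_2\) and \(x'=x+c_1\). Then \(x'\le z'-D_2-\Xi_2-c_2\) holds exactly when \(x\le z-D_2-\Xi_2\). The argument of \(\ph_{a,p}\) becomes \(x'-D_1-\Xi_1-c_1=x-D_1-\Xi_1\). The holding term becomes \(h_2(z'-D_2-\Xi_2-c_2-x')=h_2(z-D_2-\Xi_2-x)\). So the integrands coincide pathwise, \(x'\) still does not observe \(D_1\) or \(\Xi_1\), and integrability is preserved. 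The inverse map gives the reverse inequality.

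\emph{Convexity in \(t\).} Fix \(t_0,t_1\ge0\), \(\theta\in[0,1]\), and \(t=\theta t_0+(1-\theta)t_1\). Take near-optimal feasible controls \((z_i,x_i(D_2,\Xi_2))\) for scale \(t_i\), and set \(z=\theta z_0+(1-\theta)z_1\) and \(x=\theta x_0+(1-\theta)x_1\). The material constraint is affine in \((x,z,t)\), so it is preserved. For fixed \((D_1,D_2,\Xi_1,\Xi_2)\), the integrand is a convex function of an argument that is jointly affine in \((x,z,t)\): a convex \(\ph_{a,p}\) term plus a linear \(h_2\) term. Hence it is at most the \(\theta\)-mixture of the two integrands. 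Taking expectations and then infima gives \(\mathscr V(t\Xi_1,t\Xi_2)\le\theta\mathscr V(t_0\Xi_1,t_0\Xi_2)+(1-\theta)\mathscr V(t_1\Xi_1,t_1\Xi_2)\). Integrability of the mixed control follows from \(\E|x_i|<\infty\).

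\emph{The bound \(\mathscr V\ge V_0\).} I will condition on \((\Xi_1,\Xi_2)=(\xi_1,\xi_2)\); this has finitely many outcomes, each of positive probability. Because \(\Xi_1\) is independent of \((D_1,D_2,\Xi_2)\), the conditional expected integrand is the objective of the deterministic-shift problem with constants \((\xi_1,\xi_2)\), evaluated at the control \(z\) and \(x(\cdot,\xi_2)\). That control is feasible and does not observe \(D_1\). By shift invariance, its cost is at least \(\mathscr V(\xi_1,\xi_2)=\mathscr V(0,0)=V_0\). Averaging over the outcomes gives \(\mathscr V(\Xi_1,\Xi_2)\ge V_0\).

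\emph{Doubling.} This follows from convexity at \(t=0,1,2\): \(\mathscr V(\Xi_1,\Xi_2)\le\tfrac12\mathscr V(0,0)+\tfrac12\mathscr V(2\Xi_1,2\Xi_2)\), and \(\mathscr V(0,0)=V_0\) was established after \eqref{appjc:cap-value}.

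I expect the main obstacle to be the information structure in the convexity step. At \(t=0\) the scaled variable carries no information about \(\Xi_2\), and at different scales the controls are nominally functions of different random variables. The enlargement argument (optimality of the cap choice, which depends only on \(z-D_2-\Xi_2\)) must therefore be stated carefully, including for non-attained infima. I would do this with \(\varepsilon\)-optimal controls rather than assuming attainment.
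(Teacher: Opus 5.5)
Your proposal is correct and follows the paper's proof essentially step for step. Both use the same reversible substitution for shifts. Both let the downstream choice observe \((D_2,\Xi_2)\) and justify this by the optimality of the cap choice, which is how the paper identifies the enlarged value with \(\mathscr V(t\Xi_1,t\Xi_2)\), including at \(t=0\). Both then mix \(\varepsilon\)-optimal controls pointwise and apply midpoint convexity at scales \(0,1,2\). The only difference is cosmetic, in the bound \(\mathscr V\ge V_0\): you keep a common endpoint \(z\) and apply shift invariance to each constant-shift conditional problem, whereas the paper allows a separate endpoint for each conditioning value, and both reach the same conclusion.
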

\proof{Proof.} For constant shifts, replace \(x\) by \(x-c_1\) and \(z\) by \(z-c_1-c_2\). The two loss arguments and the availability constraint are unchanged. These reversible substitutions preserve the infima.

For the scaling comparison, fix the underlying variables \((D_1,D_2,\Xi_1,\Xi_2)\). Let \(\widetilde v(t)\) be the value of \eqref{appjc:joint-value} at \((t\Xi_1,t\Xi_2)\) when the downstream choice may depend on \((D_2,\Xi_2)\) for every \(t\ge0\). At scales \(t_0,t_1\), select feasible choices \((z_0,x_0(D_2,\Xi_2))\) and \((z_1,x_1(D_2,\Xi_2))\). For \(\theta\in[0,1]\), their weighted averages satisfy
\[
 (1-\theta)x_0+\theta x_1
 \le (1-\theta)z_0+\theta z_1-D_2-
       \{(1-\theta)t_0+\theta t_1\}\Xi_2.
\]
The convexity of \(\ph_{a,p}\) bounds the mismatch loss at these averages by the same average of the two losses, and the remaining cost is linear. Choosing solutions arbitrarily close to their infima proves that \(\widetilde v\) is convex. For \(t>0\), observing \(t\Xi_2\) is equivalent to observing \(\Xi_2\), so \(\widetilde v(t)=\mathscr V(t\Xi_1,t\Xi_2)\). At \(t=0\), the conditional objective given \((D_2,\Xi_2)\) does not involve \(\Xi_2\); for each fixed \(z\), the choice \(\min\{r_0,z-D_2\}\) is optimal, where \(r_0\) minimizes \(G_0(x)-h_2x\). Thus, \(\widetilde v(0)=\mathscr V(0,0)=V_0\). The midpoint convexity at scales zero, one, and two gives the second inequality in \eqref{appjc:inventory-doubling}.

For the first inequality, condition on \((\Xi_1,\Xi_2)\) and allow the endpoint and downstream choice to be selected separately for every conditioning value. Allowing these additional choices can only lower the infimum. The independence of demand and cycle positions preserves each conditional demand distribution, and shifting the inventory levels removes the fixed cycle positions. Each conditional value is, therefore, \(V_0\).\Halmos\endproof

The information restriction in \eqref{appjc:joint-value} is important for this argument. The downstream demand remains unknown when \(x\) is chosen. In the conditioning argument, revealing a cycle position enlarges the allowable choices and is used only to obtain a lower bound. Scaling both cycle positions by the same factor keeps the weighted endpoint and downstream choice feasible at the intermediate scale, where the convexity bound applies to the total inventory cost.

\subsection{Converting Independent Cycle Sizes into Feasible Policies}\label{app:joint-policy}

For positive integer sizes \(m_1,m_2\) with independent cycle positions, define
\begin{equation}
 V_{m_1,m_2}=\mathscr V(J_{m_1},J_{m_2}),\qquad
 \mathcal J(k_1,k_2)=\inf_{(m_1,m_2)\in\mathbb N^2}
       \{k_1/m_1+k_2/m_2+V_{m_1,m_2}\}.
 \label{appjc:J-def}
\end{equation}
The sizes \(m_1,m_2\) need not be multiples of each other. The following conversion preserves their cost comparison while producing an upstream lot that is an integer multiple of the downstream lot.

Fix \(m_1\), and write
\begin{equation}
 \begin{aligned}
 G_{m_1}(v)&=\E G_0(v-J_{m_1}),& r_{m_1}&\in\arg\min_v\{G_{m_1}(v)-h_2v\},\\
 \widetilde F_{m_1}(w)&=G_{m_1}(r_{m_1}-w^-)-G_{m_1}(r_{m_1})+h_2w^+,& F_{m_1}(y)&=\E \widetilde F_{m_1}(y-D_2).
 \end{aligned}
 \label{appjc:residual-loss}
\end{equation}
The function \(\widetilde F_{m_1}\) records the change in cost from the reference level \(r_{m_1}\). Its left slopes are those of \(G_{m_1}\) below \(r_{m_1}\). Hence, they are at most \(h_2\); its right slope is \(h_2\). It is convex, with limiting slopes \(-p\) and \(h_2\). The function \(F_{m_1}\) is also convex, with slopes bounded between \(-p\) and \(h_2\) and tending to these limits at the two ends. Averaging over the unbounded support of \(D_2\) can leave its right slope strictly below \(h_2\) at every finite argument. Integer demands and cycle positions allow an integer \(r_{m_1}\) and make \(F_{m_1}\) linear between consecutive integers. With \(w=z-r_{m_1}-D_2-J_{m_2}\), the integrand in \eqref{appjc:cap-value} is \(G_{m_1}(r_{m_1})+\widetilde F_{m_1}(w)\). Averaging over \(D_2\) and optimizing the endpoint \(z-r_{m_1}\), therefore, gives
\begin{equation}
 V_{m_1,m_2}=G_{m_1}(r_{m_1})+\mathcal H_{m_2}(F_{m_1}).
 \label{appjc:inventory-cycle-reduction}
\end{equation}
The objective \(y\mapsto\E F_{m_1}(y-J_{m_2})\) has limiting slopes \(-p<0\) and \(h_2>0\), so it grows without bound in both directions and attains its minimum at a finite endpoint. It is linear between consecutive integers, so a minimizing endpoint can be chosen as an integer. The same reasoning applies to the threshold objective, whose limiting slopes are \(-(p+h_2)<0\) and \(h_1>0\).

For a positive integer multiplier \(g\) and integer difference \(\ell\), Proposition~\ref{prop:rnq-account-main} gives the feasible controls
\begin{equation}
 (R_1,R_2,Q_1,Q_2)=(r_{m_1}-m_1,\ r_{m_1}-m_1+\ell,\ m_1,\ gm_1).
 \label{appjc:physical-controls}
\end{equation}
Since \(\mathcal L_{m_1}G_0=G_{m_1}\), the bracket in \eqref{eq:rnq-account-main} at \(s=r_{m_1}\) and \(w=\ell+jm_1-D_2\) equals \(G_{m_1}(r_{m_1})+\widetilde F_{m_1}(w)\). Averaging over \(j=0,\ldots,g-1\) and \(D_2\) gives their exact lot-accounted cost
\begin{equation}
 \frac{k_1}{m_1}+G_{m_1}(r_{m_1})+\frac{k_2}{gm_1}
       +\E F_{m_1}(\ell+m_1J_g).
 \label{appjc:physical-cost}
\end{equation}
Every control is integer, and every dispatch consists of complete lots. Appendix~\ref{app:policy-cost} proves construction and admissibility, including negative reorder-point differences.

Choose an integer \(z\) attaining the infimum in \eqref{appjc:cap-value} for \((J_{m_1},J_{m_2})\). Partition the positions \(0,\ldots,m_2-1\) according to their remainders after division by \(m_1\). A nonempty group with remainder \(b\) consists of \(b,b+m_1,\ldots,b+(g_b-1)m_1\) and has probability \(g_b/m_2\). Conditional on this group, the last term in \eqref{appjc:inventory-cycle-reduction} is \(\E F_{m_1}(z-r_{m_1}-b-m_1J_{g_b})\). The variables \(J_{g_b}\) and \(g_b-1-J_{g_b}\) have the same distribution. Therefore, setting
\begin{equation}
 \ell_b=z-r_{m_1}-b-m_1(g_b-1)
 \label{appjc:group-placement}
\end{equation}
makes this conditional cost exactly \(\E F_{m_1}(\ell_b+m_1J_{g_b})\), which appears in the physical cost formula. Thus, every group gives an explicit feasible policy.

\begin{lemma}[{\sc Choosing a Feasible Policy}]\label{appjc:policy-lemma}
For each \((m_1,m_2)\), a nonempty remainder group gives a deterministic classical policy with lot-accounted cost at most \(k_1/m_1+k_2/m_2+V_{m_1,m_2}\). Thus,
\begin{equation}
 \inf_{\Pi\in\mathcal P_{\RnQ}}\Clot(\Pi)\le\mathcal J(k_1,k_2).
 \label{appjc:policy-bound}
\end{equation}
\end{lemma}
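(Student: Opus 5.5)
The plan is a weighted-average argument over the remainder groups, using the setup preceding the lemma. Fix \((m_1,m_2)\), an integer \(r_{m_1}\) minimizing \(G_{m_1}(v)-h_2v\), and an integer endpoint \(z\) attaining the infimum in \eqref{appjc:cap-value}. Then \eqref{appjc:inventory-cycle-reduction} gives
\[
V_{m_1,m_2}=G_{m_1}(r_{m_1})+\E F_{m_1}(z-r_{m_1}-J_{m_2}).
\]
I will condition the second term on the remainder group of \(J_{m_2}\) modulo \(m_1\). The nonempty groups \(b\) have probabilities \(g_b/m_2\) summing to one. Within group \(b\), the position is \(b+m_1J_{g_b}\), and the conditional cost equals \(\E F_{m_1}(\ell_b+m_1J_{g_b})\) with \(\ell_b\) from \eqref{appjc:group-placement}. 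This uses the symmetry \(J_{g_b}\overset{d}{=}g_b-1-J_{g_b}\). Therefore
\[
V_{m_1,m_2}=G_{m_1}(r_{m_1})+\sum_b\frac{g_b}{m_2}\E F_{m_1}(\ell_b+m_1J_{g_b}).
\]

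Next, for each group take the controls \eqref{appjc:physical-controls} with \(g=g_b\) and \(\ell=\ell_b\); all of them are integers. By Proposition~\ref{prop:rnq-account-main} and the reduction leading to \eqref{appjc:physical-cost}, the exact lot-accounted cost of this policy, call it \(\Pi_b\), is
\[
C_b=\frac{k_1}{m_1}+G_{m_1}(r_{m_1})+\frac{k_2}{g_bm_1}+\E F_{m_1}(\ell_b+m_1J_{g_b}).
\]
Averaging \(C_b\) with weights \(g_b/m_2\) gives three pieces. The first two terms are unchanged. The inventory terms reproduce \(V_{m_1,m_2}\) by the display above. The upstream setup terms contribute \(\sum_b (g_b/m_2)\,k_2/(g_bm_1)=(\#\text{groups})\,k_2/(m_1m_2)\le k_2/m_2\), because there are at most \(m_1\) remainder classes and \(k_2\ge0\).

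Hence \(\sum_b (g_b/m_2)C_b\le k_1/m_1+k_2/m_2+V_{m_1,m_2}\). Since a weighted average cannot be smaller than its minimum, some \(b\) satisfies \(C_b\le k_1/m_1+k_2/m_2+V_{m_1,m_2}\). That group's policy is deterministic and belongs to \(\mathcal P_{\RnQ}\) with the canonical initialization of Appendix~\ref{app:stationary-construction}. Admissibility follows from Lemma~\ref{lem:rnq-admissible}. Taking the infimum over \((m_1,m_2)\) then yields \eqref{appjc:policy-bound}.

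The main obstacle is verifying that the bracket in \eqref{eq:rnq-account-main} at \(s=r_{m_1}\) equals \(G_{m_1}(r_{m_1})+\widetilde F_{m_1}(\ell+jm_1-D_2)\) exactly, for both signs of \(w\). For \(w\ge0\), the term \(\mathcal L_{m_1}G_0(r_{m_1})=G_{m_1}(r_{m_1})\) plus \(h_2w^+\) matches directly. For \(w<0\), the bracket is \(G_{m_1}(r_{m_1}+w)\), and this is exactly how \(\widetilde F_{m_1}\) is defined. A second point to check is that the \(j\)-average over \(J_g\) is aligned with the group's arithmetic progression, rather than its reversal. The reflection built into \(\ell_b\) handles this.
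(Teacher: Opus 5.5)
Your proposal is correct and follows the paper's proof essentially step for step. You weight each remainder group's policy by \(g_b/m_2\) and recover \(V_{m_1,m_2}\) exactly through the reflected placement \(\ell_b\). You then bound the weighted upstream setup rate by \(k_2/m_2\), since there are at most \(\min\{m_1,m_2\}\le m_1\) nonempty groups, and take the cheapest member. As in the paper, the existence of an integer endpoint \(z\) attaining the infimum relies on the standing assumption \(h_1,h_2,p>0\); the zero-rate boundary is handled separately, using an endpoint within \(\delta\) of \(V_{m_1,m_2}\).
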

\proof{Proof.} Assign group \(b\)'s policy the weight \(g_b/m_2\). The weighted inventory cost is exactly \(V_{m_1,m_2}\), and every group retains the downstream setup term \(k_1/m_1\). There are \(\min\{m_1,m_2\}\) nonempty groups. Their weighted upstream setup rate is
\begin{equation}
 \sum_b\frac{g_b}{m_2}\frac{k_2}{m_1g_b}
 =\frac{k_2\min\{m_1,m_2\}}{m_1m_2}\le\frac{k_2}{m_2}.
 \label{appjc:setup-group-average}
\end{equation}
At least one member of this finite collection costs no more than its weighted average. Taking the infimum over \((m_1,m_2)\) proves \eqref{appjc:policy-bound}.\Halmos\endproof

When \(m_2\ge m_1\), write \(m_2=qm_1+d\), with \(0\le d<m_1\). The possible group sizes are \(q\) and \(q+1\): there are \(m_1-d\) groups of the first size and \(d\) of the second. The conversion, therefore, uses only two adjacent upstream multiples, and \eqref{appjc:setup-group-average} holds with equality. When \(m_2<m_1\), each nonempty group has one position and gives equal lot sizes \(Q_1=Q_2=m_1\); its upstream setup rate is no greater than the auxiliary rate. Operationally, grouping preserves the downstream delivery size and adjusts the upstream order to a feasible multiple. The weighted average of the policies' total costs is at most the auxiliary cost, so at least one deterministic policy costs no more than that auxiliary cost.

\paragraph{\textbf{\textup{Running example (a): coordinating the lot sizes.}}} For \(m_1=3,m_2=8\) from Section~\ref{sec:guarantee} and Figure~\ref{fig:cycle-groups}(a), write \(r=r_3\), and let \(z\) be an optimal auxiliary endpoint. Put \(y=z-r\). The three groups are \(\{0,3,6\}\), \(\{1,4,7\}\), and \(\{2,5\}\). Equation~\eqref{appjc:group-placement} gives the controls below, all with \(Q_1=3\) and \(R_1=r-3\).
\begin{equation}
 \begin{array}{c|c|c|c|c|c}
 b&g_b&\text{weight}&\ell_b&R_2&Q_2\\ \hline
 0&3&3/8&y-6&z-9&9\\
 1&3&3/8&y-7&z-10&9\\
 2&2&2/8&y-5&z-8&6
 \end{array}
 \label{appjc:worked-groups}
\end{equation}
The reorder points are part of the conversion. In particular, the two rows with the same lot sizes place their upstream cycles differently. All three rows keep the downstream threshold \(r\), so the common contribution \(k_1/3+G_3(r)\) is unchanged. The residual inventory costs in the first row are the average of \(F_3(y-6),F_3(y-3),F_3(y)\). Those in the second row are the average of \(F_3(y-7),F_3(y-4),F_3(y-1)\). The final row averages \(F_3(y-5),F_3(y-2)\). Each original position, therefore, appears exactly once when the rows are combined with their stated weights.

Let \(C_b\) be row \(b\)'s complete lot-accounted cost. The setup and inventory calculations give
\begin{align}
 \frac38\frac{k_2}{9}+\frac38\frac{k_2}{9}+\frac28\frac{k_2}{6}
 &=\frac{k_2}{8},\\
 \frac38C_0+\frac38C_1+\frac28C_2
 &=\frac{k_1}{3}+\frac{k_2}{8}+G_3(r)
   +\frac18\sum_{j=0}^{7}F_3(y-j)
 =\frac{k_1}{3}+\frac{k_2}{8}+V_{3,8}.
 \label{appjc:worked-group-cost}
\end{align}
Thus, evaluating these three policies and retaining the cheapest completes the conversion for this auxiliary pair. The procedure needs no information about future demand: \(r,z\), and each row's four controls are fixed before operations begin. The example also shows why choosing just the nearest feasible upstream lot size would leave the comparison incomplete. Both the inventory positions and the setup rates enter the weighted identity. As a variation, take \(m_1=3,m_2=2\), so the auxiliary upstream cycle is smaller than the downstream cycle. The two groups each have one position and produce upstream lots of three units; their average upstream setup rate is \(k_2/3\le k_2/2\).\Halmos

\subsection{A Stagewise Upper Bound for the Joint Value}

The joint calculation is also bounded above by two separate cycle calculations. To see this, fix \((m_1,m_2)\), and choose an integer \(s\) minimizing \(G_{m_1}\). In \eqref{appjc:joint-value}, use the feasible choice \(x=\min\{s,z-D_2-J_{m_2}\}\). If \(w=z-s-D_2-J_{m_2}\), the resulting cost is \(G_{m_1}(s-w^-)+h_2w^+\). For \(w\ge0\), its excess over \(G_{m_1}(s)\) is \(h_2w\). For \(w<0\), every slope of \(G_{m_1}\) is at least \(-p\). Hence, \(G_{m_1}(s+w)-G_{m_1}(s)\le-pw\). The excess is at most \(\ph_{h_2,p}(w)\). Optimizing upstream placement gives
\begin{equation}
 V_{m_1,m_2}\le H_0+\mathcal H_{m_1}(P)+\mathcal H_{m_2}(F_{\mathrm{up}}),
 \qquad
 \mathcal J(k_1,k_2)\le B_1(k_1)+B_2(k_2).
 \label{appjc:separate-upper}
\end{equation}
The second inequality follows by adding the two setup rates and minimizing independently over the integer sizes. These separate minimizations define \(B_1(k_1)\) and \(B_2(k_2)\); the chosen sizes remain eligible for the group construction.

At equal allocation, \(F_1^{1/2,1/2}(x)\ge P(x)/2\). The convexity of \(\ph_{h_2,p}\) and the independence of \(D_1,D_2\) give \(F_2^{1/2,1/2}(x)\ge F_{\mathrm{up}}(x-\E D_1)/2\). For \(c>0\), the definition gives \(\mathcal C_{cF}(k)=c\mathcal C_F(k/c)\). Also, \(\mathcal C_{F(\cdot-d)}(k)=\mathcal C_F(k)\) for every real \(d\), because \(\mathcal H_j\) optimizes over real placements. Taking \(d=\E D_1\) and including the allocated constant \(H_0/2\), we obtain
\begin{equation}
 \begin{aligned}
 \mathcal A(k_1,k_2)
 &\ge \tfrac12H_0+\mathcal C_{P/2}(k_1)+\mathcal C_{F_{\mathrm{up}}/2}(k_2)\\
 &=\tfrac12\{B_1(2k_1)+B_2(2k_2)\}
 \ge\tfrac12\mathcal J(2k_1,2k_2).
 \end{aligned}
 \label{appjc:equal-allocation}
\end{equation}
Appendix~\ref{app:lower-bounds} establishes the validity of the allocation against every admissible shipment policy. Inequality~\eqref{appjc:equal-allocation} links this lower bound to the joint inventory value. The allocation comparison remains valid when some cost rates become zero.

\subsection{Even and Odd Positions and the Two-Scale Comparison}\label{app:joint-doubling}

We write \(tk=(tk_1,tk_2)\), keeping all holding, backlog, and demand parameters fixed. Separating the even and odd positions at both stages gives the following comparison.

\begin{lemma}[{\sc Joint Cycle Doubling}]\label{appjc:doubling-lemma}
For every nonnegative setup vector,
\begin{equation}
 \mathcal J(4k)\ge2\mathcal J(k)-V_0,\qquad
 \mathcal J(2k)\ge\frac43\mathcal J(k)-\frac13V_0.
 \label{appjc:J-doubling}
\end{equation}
\end{lemma}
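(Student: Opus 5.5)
The plan is to prove the first inequality in \eqref{appjc:J-doubling} as the equivalent bound \(\mathcal J(k)\le\tfrac12\mathcal J(4k)+\tfrac12V_0\). Fix any pair \((m_1,m_2)\in\mathbb N^2\). From its positions I will build smaller integer cycles whose weighted total cost at setup vector \(k\) is at most \(\tfrac12\{4k_1/m_1+4k_2/m_2+V_{m_1,m_2}\}+\tfrac12V_0\). Taking the infimum over \((m_1,m_2)\) then gives the claim. The second inequality follows afterwards from concavity.

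\emph{Step 1 (halving both cycles).} At each stage write \(J_{m_i}=2J_i'+\varepsilon_i\), where \(\varepsilon_i\in\{0,1\}\) is the parity. Conditional on \(\varepsilon_i\), the variable \(J_i'\) is uniform on \(\{0,\ldots,g_i(\varepsilon_i)-1\}\), where \(g_i(0)=\lceil m_i/2\rceil\) and \(g_i(1)=\lfloor m_i/2\rfloor\). The parity has probability \(g_i(\varepsilon_i)/m_i\), and \(\varepsilon_1,\varepsilon_2\) are independent. When \(m_i=1\), only \(\varepsilon_i=0\) occurs, with \(g_i=1\). Lemma~\ref{appjc:scale-lemma} gives convexity of \(t\mapsto\mathscr V(tJ_{m_1},tJ_{m_2})\) with value \(V_0\) at \(t=0\). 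Applying it at \(t=0,\tfrac12,1\) yields
\[
\mathscr V\bigl(\tfrac12J_{m_1},\tfrac12J_{m_2}\bigr)\le\tfrac12V_{m_1,m_2}+\tfrac12V_0 .
\]

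\emph{Step 2 (conditioning on parities).} I will show
\[
\sum_{\varepsilon_1,\varepsilon_2}\frac{g_1(\varepsilon_1)g_2(\varepsilon_2)}{m_1m_2}\,V_{g_1(\varepsilon_1),g_2(\varepsilon_2)}\le\mathscr V\bigl(\tfrac12J_{m_1},\tfrac12J_{m_2}\bigr).
\]
Here \(\tfrac12J_{m_i}=J_i'+\varepsilon_i/2\). The argument mirrors the first inequality of Lemma~\ref{appjc:scale-lemma}. Any feasible \((z,x(D_2,\Xi_2))\) for the right-hand problem is used separately within each parity cell, so the conditional endpoint and downstream rule may depend on \((\varepsilon_1,\varepsilon_2)\). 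This enlarges the choice set and so can only lower the value. In each cell the remaining randomness is \(J_1'+\varepsilon_1/2\) and \(J_2'+\varepsilon_2/2\), which are independent of each other and of demand. By the constant-shift invariance in Lemma~\ref{appjc:scale-lemma}, each conditional infimum equals \(V_{g_1,g_2}\). This is the step I expect to be delicate. One must check that letting \(x\) depend on \(\varepsilon_1\) is harmless here: we only need a lower bound for the original problem, and the conditional problems keep \(D_1\) unobserved. One must also check that conditioning on \(\varepsilon_1,\varepsilon_2\) leaves \(D_1,D_2\) with their original Poisson laws.

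\emph{Step 3 (setups and conclusion).} A set of size \(g\) has weight \(g/m_i\) and setup \(k_i/g\), so it contributes \(k_i/m_i\). Summing over the at most two nonempty parity sets gives at most \(2k_i/m_i\). For each parity pair, \(\mathcal J(k)\le k_1/g_1+k_2/g_2+V_{g_1,g_2}\). Averaging with the product weights and using Steps 1 and 2 gives
\[
\mathcal J(k)\le\frac{2k_1}{m_1}+\frac{2k_2}{m_2}+\tfrac12V_{m_1,m_2}+\tfrac12V_0 .
\]
The right side equals \(\tfrac12\{4k_1/m_1+4k_2/m_2+V_{m_1,m_2}\}+\tfrac12V_0\). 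Taking the infimum over \((m_1,m_2)\) proves \(\mathcal J(4k)\ge2\mathcal J(k)-V_0\). For the second inequality, note that \(\mathcal J\) restricted to the ray \(\{tk:t\ge0\}\) is an infimum of affine functions of \(t\), hence concave. Since \(2=\tfrac23\cdot1+\tfrac13\cdot4\), concavity gives \(\mathcal J(2k)\ge\tfrac23\mathcal J(k)+\tfrac13\mathcal J(4k)\). Substituting the first inequality yields \(\mathcal J(2k)\ge\tfrac43\mathcal J(k)-\tfrac13V_0\).
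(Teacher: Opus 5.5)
Your proof is correct and is essentially the paper's argument. It uses the same even/odd split at both stages, the same relaxation letting the endpoint and downstream rule depend on the parity labels (with constant shifts removing the offsets), the same scaling convexity anchored at \(\mathscr V(0,0)=V_0\), the same setup count of at most \(2k_i/m_i\) per stage, and the same concavity step along the ray \(tk\).

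The only difference is the order of the two inventory steps. You apply convexity once, to the unconditioned problem at scale \(1/2\), and then condition on parities in the half-scaled problem; the half-integer offsets are harmless because \(\mathscr V\) optimizes over real placements. The paper instead conditions first at the original scale and applies \(\mathscr V(2\Xi_1,2\Xi_2)\ge2\mathscr V(\Xi_1,\Xi_2)-V_0\) inside each parity cell, which keeps every intermediate position on the integer lattice.
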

\proof{Proof.} For an integer size \(j\ge2\), the even positions of \(J_j\) have the distribution \(2J_{\lceil j/2\rceil}\), and the odd positions have the distribution \(1+2J_{\lfloor j/2\rfloor}\). Each set has probability equal to its size divided by \(j\). Let \(S_j\) be the size of the set containing the realized position \(J_j\); this is a positive random variable. The two stages use independent copies, including when their cycle sizes are equal. Then
\begin{equation}
 \E(1/S_j)=2/j\quad(j\ge2).
 \label{appjc:parity-reciprocal}
\end{equation}
For \(j=1\), retain its single even position, set \(S_1=1\), and omit the empty set of odd positions. Its position is zero, and \(\E(1/S_1)=1\le2\). All retained sets, therefore, have positive size.

Apply the two independent partitions to \(J_{m_1},J_{m_2}\). Conditional on their even/odd labels, allow a separate endpoint \(z\) and downstream choice for each pair of labels. This enlarges the choices in the inventory problem. Conditional on the labels, the remaining cycle positions are still independent, and the downstream choice still cannot see \(D_1\) or the remaining downstream position. Shifting the endpoint and downstream choice removes the parity offsets, and Lemma~\ref{appjc:scale-lemma} gives
\begin{equation}
 V_{m_1,m_2}\ge\E\mathscr V(2J_{S_{m_1}},2J_{S_{m_2}})
       \ge2\E V_{S_{m_1},S_{m_2}}-V_0.
 \label{appjc:parity-inventory}
\end{equation}
The first inequality has this direction because separate choices after observing the labels can lower the optimized cost. It does not give the actual policy additional information.

Equation~\eqref{appjc:parity-reciprocal} and the convention for cycles of size one imply
\begin{equation}
 \frac{4k_1}{m_1}+\frac{4k_2}{m_2}
 \ge2\E\left[\frac{k_1}{S_{m_1}}+\frac{k_2}{S_{m_2}}\right].
 \label{appjc:parity-setups}
\end{equation}
Adding \eqref{appjc:parity-inventory} shows that every candidate defining \(\mathcal J(4k)\) costs at least \(2\mathcal J(k)-V_0\). Taking its infimum proves the first inequality in \eqref{appjc:J-doubling}. Next, \(t\mapsto\mathcal J(tk)\) is concave, since it is an infimum of affine functions of \(t\). As \(2=(2/3)1+(1/3)4\), \(\mathcal J(2k)\ge(2/3)\mathcal J(k)+(1/3)\mathcal J(4k)\ge(4/3)\mathcal J(k)-(1/3)V_0\). This proves the second inequality.\Halmos\endproof

\paragraph{\textbf{\textup{Running example (b): comparing cycle scales.}}} For the same \(m_1=3,m_2=8\), consider the separation into even and odd positions in Figure~\ref{fig:cycle-groups}(b). The downstream cycle of size three has the even-position set \(\{0,2\}\) and the odd-position set \(\{1\}\), of sizes two and one. Subtracting each set's first position and dividing by two gives smaller cycles of sizes two and one, with probabilities \(2/3\) and \(1/3\). The upstream cycle of size eight has sets \(\{0,2,4,6\}\) and \(\{1,3,5,7\}\), so both sets produce cycles of size four, each with probability \(1/2\). The independent partitions give four pairs of even/odd labels. Combining pairs that produce the same smaller sizes gives
\begin{equation}
 \begin{array}{c|c|c}
 (S_3,S_8)&(2,4)&(1,4)\\ \hline
 \text{probability}&2/3&1/3.
 \end{array}
 \label{appjc:worked-parity}
\end{equation}
For example, an even first position and an odd second position have the conditional distributions \(2J_2\) and \(1+2J_4\). Shifting the upstream endpoint removes the second offset. Revealing these two even/odd labels permits a separate endpoint for this conditional problem, which lowers its optimized value. The remaining downstream position and downstream lead-time demand retain the information restriction of \eqref{appjc:joint-value}. Apply the scale inequality to the four conditional problems, average, and combine the terms with the same smaller sizes: \(V_{3,8}\ge2\{(2/3)V_{2,4}+(1/3)V_{1,4}\}-V_0\). The coefficient of \(V_0\) is exactly one because the probabilities sum to one. Every division uses an exact set size.

The setup calculation uses the same set probabilities. Downstream, the even-position set contributes \((2/3)(k_1/2)=k_1/3\), and the odd-position set contributes \((1/3)k_1=k_1/3\). Upstream, each set contributes \((4/8)(k_2/4)=k_2/8\). Thus, the weighted setup costs are \(2k_1/3\) downstream and \(k_2/4\) upstream, as shown in Figure~\ref{fig:cycle-groups}(b). Counting both twice gives \(2\E\{k_1/S_3+k_2/S_8\}=4k_1/3+k_2/2\). Adding this equality to the inventory comparison yields twice a weighted average of the two smaller-cycle candidates for \(\mathcal J(k)\), less \(V_0\). Each candidate is at least \(\mathcal J(k)\). This verifies the lower estimate \(2\mathcal J(k)-V_0\) for the particular candidate with sizes three and eight at setup vector \(4k\).

For a cycle of size one, take \(m_1=1,m_2=8\). The downstream cycle's only position is zero and remains zero when scaled. Keep \(S_1=1\), and apply the same two upstream sets for the cycle of size eight. The doubled average setup is then \(2k_1+k_2/2\), whereas the original candidate at setup vector \(4k\) pays \(4k_1+k_2/2\). Their difference is \(2k_1\ge0\), exactly the allowance used in \eqref{appjc:parity-setups}. The inventory comparison still uses \(\mathscr V(0,2J_4)\ge2\mathscr V(0,J_4)-V_0\). Thus, the comparison also covers unit lots under nonnegative setup costs.\Halmos

The role of the zero-setup value can be seen directly. Dividing both cycles reduces inventory dispersion, but it changes the frequency of setups. The even--odd calculation accounts for both changes exactly. The convexity of the joint inventory value bounds the inventory improvement relative to the inventory cost that remains when both cycles have size one. Indeed, \(\mathcal J(0,0)=V_0\): Lemma~\ref{appjc:scale-lemma} bounds every cycle pair below by \(V_0\), and the pair \((1,1)\) attains it.

\proof{Theorem~\ref{thm:five-thirds}: comparison for positive holding and backlog rates.} Proposition~\ref{prop:two-scale-lower} gives
\begin{equation}
 L_{\mathrm{two}}=
 \max\left\{\frac{B_1(2k_1)+B_2(2k_2)}2,
 \frac{V_0}{2}+\frac{B_1(4k_1)+B_2(4k_2)}4\right\}
 \le\LBsp(k)\le\OPTsh.
 \label{appjc:two-scale-value}
\end{equation}
Combining \eqref{appjc:separate-upper} and \eqref{appjc:J-doubling}, their respective lower estimates are
\begin{align}
 L_{\mathrm{two}}&\ge\tfrac12\mathcal J(2k)
       \ge\tfrac23\mathcal J(k)-\tfrac16V_0,\label{appjc:first-scale}\\
 L_{\mathrm{two}}&\ge\tfrac12V_0+\tfrac14\mathcal J(4k)
       \ge\tfrac12\mathcal J(k)+\tfrac14V_0.\label{appjc:second-scale}
\end{align}
Take \(3/5\) of the first estimate and two fifths of the second. Their coefficients of \(V_0\) cancel, giving \(L_{\mathrm{two}}\ge3\mathcal J(k)/5\). Lemma~\ref{appjc:policy-lemma} now yields
\begin{equation}
 \inf_{\Pi\in\mathcal P_{\RnQ}}\Clot(\Pi)
 \le\mathcal J(k)\le\frac53L_{\mathrm{two}}
 \le\frac53\LBsp(k)\le\frac53\OPTsh.
 \label{appjc:final-comparison}
\end{equation}
For each constructed policy, \(\Csh\le\Clot\), so the shipment-accounted comparison follows as well. This proves the cost-infimum comparison under the standing positive-rate assumptions. Appendix~\ref{app:joint-boundary} proves finite attainment here and covers the remaining parameters.\Halmos\endproof

The two estimates complement each other: the first has a larger coefficient on the joint value and subtracts part of the unavoidable inventory cost; the second restores that cost. Their weighted combination removes the need to divide instances into cases according to the relative importance of setup and inventory costs. Equal allocation and these two scales suffice.

\subsection{Finite Attainment and Zero-Cost Cases}\label{app:joint-boundary}

On the positive holding and backlog region, an actual finite policy satisfies \eqref{appjc:final-comparison}. We first verify that the independent-cycle infimum is attained. The choice \(x=x(D_2,J_{m_2})\) is independent of \((D_1,J_{m_1})\), so conditional on \(x\), its expected mismatch cost is \(\E_{D_1,J_{m_1}}\ph_{a,p}(x-D_1-J_{m_1})\ge\mathcal H_{m_1}(P)\). Dropping the nonnegative surplus term in \eqref{appjc:joint-value}, therefore, gives \(V_{m_1,m_2}\ge H_0+\mathcal H_{m_1}(P)\). For the upstream cycle size, let \(d=z-D_2-J_{m_2}-x\ge0\). Since \(\ph_{h_2,p}\) has upper slope \(h_2\le a\), \(\ph_{a,p}(x-D_1-J_{m_1})+h_2d \ge\ph_{h_2,p}(z-D_2-J_{m_2}-D_1-J_{m_1})\). Replacing the independent \(D_1+J_{m_1}\) by its mean in this convex lower bound, followed by a shift of \(z\), therefore, gives \(V_{m_1,m_2}\ge H_0+\mathcal H_{m_2}(F_{\mathrm{up}})\). Both lower bounds grow with their own cycle sizes. Explicitly, \(\ph_{c,d}(x)\ge\min\{c,d\}|x|\), and for a position chosen uniformly from \(j\) consecutive integers, minimizing its expected absolute distance over all fixed real points gives a value of at least \((j-1)/4\). Replacing demand by its mean consequently gives
\begin{equation}
 V_{m_1,m_2}\ge H_0+\frac{\min\{a,p\}}4(m_1-1),\qquad
 V_{m_1,m_2}\ge H_0+\frac{\min\{h_2,p\}}4(m_2-1).
 \label{appjc:finite-lengths}
\end{equation}
If an auxiliary candidate has finite total cost \(C\), the nonnegative setup costs and \eqref{appjc:finite-lengths} place finite upper bounds on both sizes of every candidate costing at most \(C\). There are only finitely many such integer pairs. Their placement objectives grow without bound in both directions and attain finite integer minima, as established after \eqref{appjc:inventory-cycle-reduction}, so \(\mathcal J(k)\) has a minimizing pair and the group construction selects a finite deterministic policy satisfying the bound. Zero setup coefficients do not affect this argument.

\paragraph{A finite construction of the selected policy.} The preceding existence argument yields an explicit selection rule on the positive holding and backlog region. We describe its bounds because the policy comparison involves both cycle sizes and their placements. Set \(\mu_i=\E D_i\). In the joint problem, choose cycles of size one, \(z=0\), and \(x=-D_2\). This is feasible and gives the reference total cost
\begin{equation}
 C_*=k_1+k_2+H_0+p(\mu_1+\mu_2),\qquad B_*=C_*-H_0.
 \label{appjc:finite-reference}
\end{equation}
It is also the cost of the finite unit-lot policy with both reorder points equal to \(-1\). In particular, \(\mathcal J(k)\le C_*\). By \eqref{appjc:finite-lengths} and nonnegative setups, any auxiliary candidate costing at most \(C_*\) has
\begin{equation}
 1\le m_1\le m_{1,*}=\left\lfloor1+\frac{4B_*}{\min\{a,p\}}\right\rfloor,
 \qquad
 1\le m_2\le m_{2,*}=\left\lfloor1+\frac{4B_*}{\min\{h_2,p\}}\right\rfloor.
 \label{appjc:finite-lot-search}
\end{equation}
Every excluded pair costs strictly more than the reference.

For each retained downstream size, its threshold can also be found within a specified finite set. Put \(X_{m_1}=D_1+J_{m_1}\) and \(q_*=(p+h_2)/(a+p)\), which lies strictly between zero and one. The first difference of the convex threshold objective is
\begin{equation}
 [G_{m_1}(r+1)-h_2(r+1)]-[G_{m_1}(r)-h_2r]
 =(a+p)\Pr(X_{m_1}\le r)-(p+h_2).
 \label{appjc:threshold-difference}
\end{equation}
Choose the smallest nonnegative integer \(r_{m_1}\) for which \(\Pr(X_{m_1}\le r_{m_1})\ge q_*\). The previous difference is negative and the current difference is nonnegative, so this choice minimizes the objective. Equality in the current difference simply permits an additional minimizing threshold. To see that the search has a finite explicit stopping point, Markov's inequality gives \(\Pr(X_{m_1}>r)\le\E X_{m_1}/(r+1)\). It is, therefore, sufficient to inspect integers through \(r_{m_1}^{\max}=\lceil(a+p)(\mu_1+(m_1-1)/2)/h_1\rceil\). This also covers \(X_{m_1}=0\), when the selected threshold is zero. The distribution function of \(X_{m_1}\) is a finite average of Poisson distribution functions.

After computing \(r_{m_1}\), write the inventory cost at endpoint \(z\) as
\[
 \mathcal I_{m_1,m_2}(z)=G_{m_1}(r_{m_1})+\E F_{m_1}(z-r_{m_1}-J_{m_2}),\qquad
 \mu_{m_1,m_2}=\mu_1+\mu_2+\frac{m_1+m_2-2}{2}.
\]
The pointwise inequality preceding \eqref{appjc:finite-lengths}, now retaining the endpoint, and averaging inside the convex lower bound imply
\begin{equation}
 \mathcal I_{m_1,m_2}(z)\ge H_0+
 \E\ph_{h_2,p}(z-D_1-J_{m_1}-D_2-J_{m_2})
 \ge H_0+\ph_{h_2,p}(z-\mu_{m_1,m_2}).
 \label{appjc:finite-endpoint-bound}
\end{equation}
Consequently, every candidate that can improve the reference has an integer endpoint in
\begin{equation}
 \left\lceil\mu_{m_1,m_2}-B_*/p\right\rceil
 \le z\le
 \left\lfloor\mu_{m_1,m_2}+B_*/h_2\right\rfloor.
 \label{appjc:finite-endpoint-search}
\end{equation}
The lower bound controls backlog from a low placement; the upper bound controls holding from a high placement. An empty integer interval means that this pair cannot improve the reference and may be omitted. Using the same \(B_*\) for every pair makes the statement simple; subtracting that pair's setup terms from the available cost only tightens the interval.

Evaluate \(k_1/m_1+k_2/m_2+\mathcal I_{m_1,m_2}(z)\) on the finite collection in \eqref{appjc:finite-lot-search} and \eqref{appjc:finite-endpoint-search}, and retain a minimizing triple \((m_1,m_2,z)\). The unit pair has a candidate costing at most \(C_*\) inside these bounds. Every candidate outside them costs more than \(C_*\). Thus, the minimum over the retained collection is exactly \(\mathcal J(k)\). This conclusion includes possible ties in sizes, thresholds, and endpoints; any minimizing triple may be retained. It also establishes a finite exclusion of all omitted lot sizes and placements, independently of a numerical stopping convention.

Finally form every nonempty remainder group for that triple. For each \(b\), compute \(g_b\), set \(\ell_b=z-r_{m_1}-b-m_1(g_b-1)\), and evaluate the actual policy cost \eqref{appjc:physical-cost}. Retain the row with the smallest complete cost. This last comparison is necessary because groups of equal size can have different placements and different inventory costs, as \eqref{appjc:worked-groups} illustrates. The weighted cost comparison in Lemma~\ref{appjc:policy-lemma} proves that the selected row costs at most \(\mathcal J(k)\). It does not require identifying in advance which row satisfies the inequality. All controls are now explicit integers, and the stationary construction in Appendix~\ref{app:policy-cost} gives their feasible implementation.

The bounds describe a finite mathematical construction for each positive-cost instance. They can be large when one holding or backlog rate is small. Their purpose here is to justify completion of the search and the existence of a selected policy; a practical computation can use tighter verified bounds. If a cost rate reaches zero, the divisions used to bound a threshold, a lot size, or an endpoint may no longer be available. The direct boundary argument below then selects finite controls within a prescribed cost tolerance, preserving feasibility before taking a limit.

Now allow arbitrary nonnegative \(h_1,h_2,p\), still with \(\lambda>0\). With zero holding or backlog rates, a placement or lot-size infimum may be approached only along a sequence, so the comparison is stated in terms of infima. The conditional downstream optimization continues to have a finite threshold once an integer \(z\) and finite sizes are chosen. To verify this, put \(P_{m_1}(x)=\E P(x-J_{m_1})\) and consider \(P_{m_1}(x)-h_2x\), whose limiting slopes are \(-(p+h_2)\) and \(h_1\).

If \(h_1>0\) and \(p+h_2>0\), the function grows without bound in both directions, and an integer minimizing threshold exists. If \(h_1=0\), it is nonincreasing, so the constrained optimum is \(x=y\) at every available level \(y=z-D_2-J_{m_2}\). Because demand and the uncentered cycle positions are nonnegative, \(y\le z\). The finite threshold \(r=z\), therefore, implements this choice as \(x=\min\{r,y\}\) for every realization. Finally, if \(p+h_2=0<h_1\), then \(p=h_2=0\), and \(P_{m_1}(x)=0\) for \(x\le0\). The finite threshold \(r=0\) gives an optimum for every \(y\). The second case includes all-zero rates.

The optimized objective is convex and linear between consecutive integers in \(z\). Integer thresholds and loss breakpoints establish this in the first case. In the second, the objective is \(H_0+\E\ph_{h_2,p}(z-D_1-J_{m_1}-D_2-J_{m_2})\). In the third case, its inventory cost is zero. Thus, the infimum over real \(z\) equals the infimum over integers, including when it is approached as \(z\) tends to infinity. Given any positive tolerance, a finite integer \(z\) approaches this value, and the associated finite threshold realizes its conditional optimum. These choices are integrable, so the restriction in \eqref{appjc:joint-value} remains valid at the boundary.

With that threshold and endpoint, grouping preserves average inventory cost, and \eqref{appjc:setup-group-average} holds for nonnegative setups. For any \(\delta>0\), choose the finite endpoint within \(\delta\) of \(V_{m_1,m_2}\). Some group's policy then costs at most \(k_1/m_1+k_2/m_2+V_{m_1,m_2}+\delta\). Choose a finite pair with total cost within another \(\delta\) of \(\mathcal J(k)\). Letting \(\delta\downarrow0\) proves the infimum comparison \eqref{appjc:policy-bound}.

The zero-cycle value agrees with the boundary benchmark: \(\mathscr V(0,0)=\inf_z\E\mathcal E_0(z-D_2)=V_0\). Indeed, the conditional minimization in \eqref{appjc:joint-value} is exactly the constrained cost \(\mathcal E_0\) in \eqref{eq:app-zero-envelope}; Proposition~\ref{prop:app-zero-value} establishes its equality with \eqref{eq:lb-protected-zero-value} for all nonnegative rates. For each finite \(z\), the finite caps just described attain that conditional minimum. At scale zero, allowing the choice to observe the independent cycle position \(\Xi_2\) leaves its conditional cost at least \(\mathcal E_0(z-D_2)\), and the same finite cap attains this value without using \(\Xi_2\).

The shifts in Lemma~\ref{appjc:scale-lemma} preserve the feasible choices for nonnegative rates. Its convexity proof also applies: choose each initial feasible pair within \(\delta\) of its infimum, average the pairs, and let \(\delta\downarrow0\). Their averages remain integrable, and the preceding paragraph identifies the value at scale zero with \(V_0\). Conditioning on the finite-support cycle positions similarly gives \(\mathscr V(\Xi_1,\Xi_2)\ge V_0\). Thus, both inequalities in \eqref{appjc:inventory-doubling} remain valid. The parity calculation uses only these inequalities, nonempty conditional sets, and nonnegative setup coefficients; the concavity calculation uses only the definition as an infimum of affine functions. They prove \eqref{appjc:J-doubling} at the boundary as well.

For the stagewise upper bound, fix \((m_1,m_2)\) and \(\delta>0\). Choose finite integer placements \(s,y\) satisfying \(G_{m_1}(s)\le H_0+\mathcal H_{m_1}(P)+\delta\) and \(\E F_{\mathrm{up}}(y-J_{m_2})\le\mathcal H_{m_2}(F_{\mathrm{up}})+\delta\). Such choices exist because the placement functions are finite and linear between successive integers. Use \(z=s+y\) and \(x=\min\{s,z-D_2-J_{m_2}\}\) in \eqref{appjc:joint-value}. The same slope bound used in \eqref{appjc:separate-upper} gives
\[
 V_{m_1,m_2}\le G_{m_1}(s)+\E F_{\mathrm{up}}(y-J_{m_2})
 \le H_0+\mathcal H_{m_1}(P)+\mathcal H_{m_2}(F_{\mathrm{up}})+2\delta.
\]
Letting \(\delta\downarrow0\), adding the setup rates, and taking the infimum over the sizes proves both inequalities in \eqref{appjc:separate-upper}. The equal-allocation comparison \eqref{appjc:equal-allocation} uses only nonnegative loss coefficients and the convexity of the loss functions, so it also holds at these rates.

Appendices~\ref{app:lower-bounds} and \ref{app:zero-setup} establish the required lower-bound validity for these nonnegative costs. Therefore, \eqref{appjc:final-comparison} continues to hold for cost infima. More explicitly, for every \(\varepsilon>0\), there is a finite deterministic classical policy \(\Pi_\varepsilon\) with
\begin{equation}
 \Csh(\Pi_\varepsilon)\le\Clot(\Pi_\varepsilon)\le\frac53L_{\mathrm{two}}+\varepsilon\le\frac53\LBsp(k)+\varepsilon\le\frac53\OPTsh+\varepsilon.
 \label{appjc:boundary-epsilon}
\end{equation}
This formulation also covers a zero optimal value and does not claim attainment when only a sequence of finite lot sizes or placements approaches the bound.

Zero lead times cause no change: the corresponding Poisson demand is identically zero, and Appendix~\ref{app:policy-cost} provides the same-epoch settlement convention for the finite policy. If \(\lambda=0\), choose no orders and the empty initial system. There is no demand, inventory, backlog, or setup cost; both sides of the comparison are zero. These cases complete the stated parameter domain.
\section{Proof of Theorem~\ref{thm:impossibility}: the Policy-Class Impossibility Result}\label{app:impossibility}

Throughout the boundary construction, demand has rate one, $p=1$, $h_1=K_2=L_1=0$, $L_2>0$, $h_2>0$, and $K_1=k>0$. Write $D_2\sim\operatorname{Poisson}(L_2)$, $\phi(x)=h_2x^++x^-$, and $F_{\mathrm{imp}}(z)=\mathbb E\phi(z-D_2)$. Let $X$ denote downstream net inventory, $W\ge0$ available upstream stock, and $Y=X+W$ total net echelon inventory. The physical running cost is
\begin{equation}
 c(X,W)=h_2X^++X^-+h_2W=h_2Y+(1+h_2)X^-\ge\phi(Y).
 \label{eq:imp-physical-floor}
\end{equation}
We retain integer quantities throughout. As in \eqref{eq:impossibility-scalar}, \(\sigma \) denotes the lowest inventory position in a cycle. All limits below concern a sequence of finite instances.

To verify the inequality, use $X\le Y$, which follows from $W\ge0$. Thus, $X^-\ge Y^-$, and $h_2Y+(1+h_2)Y^-=\phi(Y)$. This comparison allows any allocation of physical stock between the locations. The equality holds whenever all available upstream material is used to remove downstream backlog. That is precisely the property enforced by the competing policy below.

\subsection{The Exact Classical Numerator Under Shipment Accounting}

\begin{lemma}\label{lem:imp-classical-equality}
On this boundary, the infima of shipment-accounted and lot-accounted classical costs both equal $J_{\mathrm{imp}}(k)$ in \eqref{eq:impossibility-scalar}.
\end{lemma}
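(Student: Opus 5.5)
The plan is to prove two inequalities: $\inf\Clot\le J_{\mathrm{imp}}(k)$ using common-lot policies, and $\inf\Csh\ge J_{\mathrm{imp}}(k)$ over all of $\mathcal P_{\RnQ}$. Combined with $\Csh\le\Clot$ from Proposition~\ref{prop:rnq-account-main}, these give
\[
J_{\mathrm{imp}}(k)\le\inf\Csh\le\inf\Clot\le J_{\mathrm{imp}}(k).
\]

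\emph{Upper bound.} Take $n=1$, $\ell=R_2-R_1=0$, a lot size $Q=m$, and endpoint $s=\sigma+m-1$. On this boundary $h_1=L_1=0$, so $D_1\equiv0$, $H_0=0$, and $G_0=\phi$. With $\ell=0$ and $n=1$ we have $(\ell-D_2)^-=D_2$ and $(\ell-D_2)^+=0$. Formula \eqref{eq:rnq-account-main} therefore reduces to
\[
I_{1,m}(s,0)=\frac1m\sum_{v=0}^{m-1}\E\phi(s-v-D_2)=\frac1m\sum_{j=0}^{m-1}F_{\mathrm{imp}}(\sigma+j).
\]
Since $K_2=0$, Proposition~\ref{prop:rnq-account-main} gives $\Clot=k/m+I_{1,m}(s,0)$. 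Minimizing over the integers $m$ and $\sigma$ yields $\inf\Clot\le J_{\mathrm{imp}}(k)$.

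\emph{Lower bound, inventory part.} Fix any classical controls $(R_1,R_2,Q,n)$ with a compatible initialization, and write $Q_2=nQ$. By \eqref{eq:imp-physical-floor}, the running cost is at least $\phi(Y)$. Because $L_1=0$, the total net echelon inventory $Y$ equals $\mathit{IL}_2$. The lead-time identity gives $\mathit{IL}_2(t)=\mathit{IP}_2(t-L_2)-D(t-L_2,t]$, where the demand increment is independent of the past. The settled position $\mathit{IP}_2$ cycles through the $Q_2$ states $R_2+1,\ldots,R_2+Q_2$. The demand counter modulo $Q_2$ converges to uniform (this is the argument in Lemma~\ref{lem:rnq-admissible}). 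Hence the long-run inventory cost is at least $Q_2^{-1}\sum_{j=0}^{Q_2-1}F_{\mathrm{imp}}(R_2+1+j)$. This is pathwise in the physical states, so it needs neither the canonical convention nor Lemma~\ref{lem:rnq-remainder}; an initial transient of length $L_2$ contributes $o(T)$.

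\emph{Lower bound, setup part.} I will show that every actual downstream dispatch carries at most $Q_2$ units. Flow balance then forces the number of positive dispatches up to $T$ to be at least $(U_1(T)-U_1(0))/Q_2$, which has rate at least $1/Q_2$, so the shipment setup rate is at least $k/Q_2$. Adding the two parts gives $\Csh\ge k/Q_2+Q_2^{-1}\sum_jF_{\mathrm{imp}}(R_2+1+j)\ge J_{\mathrm{imp}}(k)$, taking $m=Q_2$ and $\sigma=R_2+1$.

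The dispatch-size bound is the main obstacle, and I will handle it by cases on the triggering event, using the stated event order. A dispatch at a demand epoch with no receipt can release at most one newly created request: before the demand, any pending request implies that no complete lot was available, and a demand creates at most one new request. It therefore carries at most $Q\le Q_2$ units. At an upstream receipt, any previously unfilled request implies that fewer than $Q$ units of upstream stock were available immediately before. The arriving $nQ$ units then complete at most $n$ lots, so at most $nQ$ units are dispatched. Coincident epochs need care. Since $L_2>0$ is fixed and each order is $Q_2$ units, two receipts cannot coincide. Under the demand-before-matching order, a receipt and a demand at the same epoch add at most one request whose lot must come from the arriving material. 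I will check that the count still does not exceed $n$ lots, which follows from the available stock being below $Q$ beforehand. The initial pending requests, together with initial stock below $Q$, are covered by the same argument and affect only finitely many dispatches, so they do not change the long-run rate.
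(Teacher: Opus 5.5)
Your proposal is correct and follows essentially the same route as the paper. For the upper bound, both use a common-lot policy with \(R_1=R_2=\sigma-1\). For the lower bound, both use the floor \(c_I\ge\phi(Y)\) with \(Y=\mathit{IL}_2\) distributed in the limit as \(R_2+1+J_{Q_2}-D_2\), together with the cap of \(Q_2\) units per aggregate downstream dispatch and rate balance.

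The differences are minor. You read the upper bound off the exact formula of Proposition~\ref{prop:rnq-account-main}, whereas the paper observes directly that \(W=0\) and \(X=Y\). You also check coincident demand--receipt epochs by hand, whereas the paper notes that they occur with probability zero.
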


\proof{Proof.} Fix classical controls with $Q_2=nQ_1$. After an event has been settled, any unfilled downstream request implies that the available upstream stock is less than $Q_1$. At a demand epoch, the nominal counter creates at most one new $Q_1$-request. If older requests remain, the pre-event upstream stock is below $Q_1$, so demand alone cannot release them. Otherwise, the new request releases at most $Q_1$ units. At an upstream receipt, exactly $Q_2$ units arrive. If requests were pending, the pre-receipt remainder is less than $Q_1$, and \(\lfloor(W+Q_2)/Q_1\rfloor\le n\). If no requests were pending, the receipt creates none. Thus, every positive downstream aggregate dispatch is at most $Q_2$. Distinct upstream orders have distinct receipt times, and a demand and an upstream receipt coincide with probability zero because $L_2>0$ is deterministic and demand is Poisson. Any finite initial clearing contributes only a boundary term that approaches zero. Rate balance consequently gives an actual downstream shipment rate of at least $1/Q_2$.

The upstream inventory position cycles through $R_2+1,\ldots,R_2+Q_2$. The demand residue is uniform in the stationary distribution, and the position at time $t-L_2$ is independent of demand during $(t-L_2,t]$. Therefore, $Y(t)$ has the distribution $R_2+1+J_{Q_2}-D_2$, where $J_{Q_2}$ is uniform on $\{0,\ldots,Q_2-1\}$ and independent of $D_2$. Equation~\eqref{eq:imp-physical-floor} and the shipment-rate bound imply
\[
 \Csh(\Pi)\ge\frac{k}{Q_2}
 +\frac1{Q_2}\sum_{j=0}^{Q_2-1}F_{\mathrm{imp}}(R_2+1+j)\ge J_{\mathrm{imp}}(k).
\]
The convergence argument in Lemma~\ref{lem:rnq-admissible} gives the same long-run bound for any fixed initial demand residue: its distribution converges to uniform, the lead-time-window moment bounds control the expected running cost, and averaging over time preserves the limit. The same bound holds for lot accounting. Conversely, choose $Q_1=Q_2=m$ and $R_1=R_2=\sigma -1$. The upstream and downstream request counters coincide. Every upstream receipt fills one waiting downstream lot immediately, so $W=0$, $X=Y$, and each stage processes one $m$-lot per $m$ demands. Both downstream setup conventions give $k/m$, and the running cost is the block average in \eqref{eq:impossibility-scalar}. Take infima to conclude.\Halmos\endproof

The dispatch cap applies to the aggregate physical movement at an epoch, including every simultaneously filled downstream request. It, therefore, already includes any consolidation savings available under shipment accounting. The inventory lower bound uses the upstream lot size $Q_2$, irrespective of the downstream reorder point or the inventory retained upstream. These two observations allow the infimum to range over all integer ratios $n$. The upstream cycle position is uniform because every state of the finite demand-residue chain advances to the next at rate one, giving equal invariant probabilities. Its long-run averages are independent of the initial residue.

\subsection{A Feasible Policy with Frequent Upstream Replenishment}

Fix an integer $S\ge10$. Order one unit from the external supplier at each demand and maintain upstream base-stock position $S$. Dispatch all available upstream stock whenever a demand makes $X<0$, and dispatch each arriving unit whenever its receipt finds $X<0$. Otherwise retain received units upstream. This is an adapted rule using available integer material. It maintains $X^-=Y^-$ after each event: if $X<0$, all available upstream stock has been dispatched, while $Y\ge0$ implies that enough stock has been available to remove backlog. Hence,
\begin{equation}
 Y(t)=S-D(t-L_2,t],\qquad I=F_{\mathrm{imp}}(S).
 \label{eq:imp-policy-inventory}
\end{equation}
Moreover, $X^+\le S$, $W\le S$, and $X^-\le D(t-L_2,t]$. These bounds give finite moments of inventory and cost on every fixed time window.

The flow balances make the source of \eqref{eq:imp-policy-inventory} explicit. Let $N(t)$ count demand, let $U_1(t)$ count units dispatched downstream, and choose compatible initial constants for the stationary history. Upstream orders have cumulative quantity $U_2(t)=N(t)$, up to that initial constant, while upstream receipts at time $t$ reflect orders through $t-L_2$. Consequently,
\[
 \begin{aligned}
 Y(t)-Y(0)&=N(t-L_2)-N(-L_2)-[N(t)-N(0)],\\
 W(t)-W(0)&=N(t-L_2)-N(-L_2)-[U_1(t)-U_1(0)].
 \end{aligned}
\]
The first balance describes total inventory protection; the second describes the material accumulated for consolidation. Since the expected change in $W$ is bounded, the second identity gives $\mathbb E[U_1(t)-U_1(0)]/t\to1$. The policy can change the number of downstream shipments while preserving the number of units eventually delivered.

A stationary version can be constructed from past demand. A demand-free interval of length $L_2$ empties the upstream pipeline and leaves $Y=S$. If the next interval of length $L_2/2$ contains $S+1$ demands, then at its last demand no order caused by that interval has yet arrived. The rule has exhausted the $S$ available units, yielding $X=Y=-1$ and $W=0$, regardless of their allocation before the interval. Such patterns occur in disjoint blocks with a fixed positive probability. Every compatible starting allocation, therefore, reaches the same state after such a block. This gives a construction determined by past demand and shows that a version started from a finite initial state has the same average costs.

More explicitly, use blocks of length $2L_2$, requiring silence in the first $L_2$ units and exactly $S+1$ demands in the next $L_2/2$ units. Their probability is $e^{-3L_2/2}(L_2/2)^{S+1}/(S+1)!>0$, and the events in distinct blocks are independent. After the indicated burst, both the allocation and the remaining upstream pipeline are determined by the observed burst alone. A most recent successful block exists almost surely in a two-sided demand history. Starting from the state determined by that block defines the present allocation using only past observations. Starting from any compatible finite allocation gives the same state after the first successful block. Inventory and backlog have uniformly bounded moments because stock is bounded by $S$ and backlog by Poisson demand during a lead time. Thus, the expected average cost before that coupling approaches zero as the evaluation horizon grows.

The upstream flow has rate one. Since $W$ is bounded, downstream dispatched quantity also has rate one; bounded mean backlog then gives customer delivery rate one. Terminal inventories and pipelines have bounded expectations, and expected costs and quantities over a fixed terminal window are bounded. The inventory-position paths also satisfy the unfinished-path condition required for admissibility. The upstream position resets after every demand. The downstream position satisfies $-D(t-L_2,t]\le X(t)\le S$, where $D(t-L_2,t]$ counts demand during the preceding lead time. Removing completed cycles leaves distinct integer levels, so the number of remaining downward transitions is bounded by the range of levels visited. For $L_2\le1$, choose $j=\lfloor t\rfloor-1$. The lead-time window $(t-L_2,t]$ then lies inside the two-unit interval $(j,j+2]$, so $D(t-L_2,t]\le D(j,j+2]$. The expected maximum of these counts up to time $T$ is at most the square root of their summed second moments, which is $O(\sqrt{T+1})$. Division by $T$ gives the required limit.

An explicit estimate verifies the unfinished-path condition. Put $C_j=D(j,j+2]$, so $\mathbb E C_j^2=6$. The enclosing intervals with $-1\le j\le\lceil T\rceil+1$ cover every lead-time window encountered before $T$. No independence between overlapping counts is needed: $\max_j C_j\le(\sum_j C_j^2)^{1/2}$. If $r_{1,T}$ counts the remaining downstream demand transitions, including intermediate event states gives the conservative bound \(\mathbb E r_{1,T}\le S+1+\sqrt{6(\lceil T\rceil+3)}=o(T)\). Upstream unit orders close their demand transitions immediately. Thus, both inventory-position paths satisfy the unfinished-path condition, in addition to the physical flow and finite-window cost conditions.

Let $\nu_D$ and $\nu_R$ be the rates of demand-triggered and receipt-triggered downstream shipments, respectively. At a demand time $t$ selected for a positive shipment, the pre-demand value is $X(t-)=0$. Write $D(a,b)=N(b-)-N(a)$ to exclude demand at the right endpoint. With $N^-(t)=D(t-L_2,t)$, the available shipment is $S-N^-(t)$. Define $\Gamma$ as the expected sum of $N^-(t)$ over these selected demand times per unit time. This sum is bounded by the corresponding sum over all demand times. Since $N^-(t)$ depends only on past demand, averaging it over Poisson demand arrivals gives its mean $L_2$. Demand arrives at rate one, so the expected sum per unit time is also $L_2$. Hence, $0\le\Gamma\le L_2$.

A receipt-triggered shipment contains one unit. For each upstream order, conditional on the history when it is placed, the additional demand before its receipt has the Poisson distribution with mean $L_2$. At the receipt time $t$, that unit has just left the pipeline. If the receipt triggers a dispatch, the downstream net inventory immediately before the dispatch is negative and the upstream stock was zero before the receipt. The postreceipt echelon level is, therefore, at most zero. It equals $S$ minus the other demands during the lead-time window, so at least $S$ such demands are necessary. The rate of all upstream receipts is one. Applying this necessary condition to that receipt stream gives $\nu_R\le t_S$, where $t_S=\Pr(D_2\ge S)$. Both rate bounds use the full demand or receipt stream before applying the policy's dispatch condition.

The shipped-unit balance is \(1=S\nu_D-\Gamma+\nu_R\): demand-triggered shipments contribute rate $S\nu_D-\Gamma$, and receipt-triggered shipments contribute rate $\nu_R$. Adding the two shipment rates proves
\begin{equation}
 \frac1S\le\nu=\nu_D+\nu_R
 =\frac1S+\frac\Gamma S+\left(1-\frac1S\right)\nu_R
 \le\frac{1+L_2}{S}+\left(1-\frac1S\right)t_S.
 \label{eq:imp-shipment-rate}
\end{equation}

These rate arguments also have finite-horizon expressions. If $\tau_j$ are the demand epochs after time zero, independent future increments at $\tau_j$ give
\[
 \mathbb E\sum_{j:\,\tau_j\le T-L_2}
 \mathbf 1\{D(\tau_j,\tau_j+L_2]\ge S\}
 =t_S\,\mathbb E N(T-L_2)=t_S(T-L_2)
 \quad(T\ge L_2).
\]
Every receipt-triggered shipment generated by these orders is counted on the left. Initial-pipeline receipts contribute a bounded expectation. Similarly, because $N^-(t)$ depends only on past demand, the expected sum of $N^-(t)$ over all demands in $(0,T]$ equals $\int_0^T\mathbb E N^-(t)\,dt=L_2T$ in stationarity. Both bounds, therefore, allow inventory-dependent shipment epochs.

\subsection{Poisson Bounds and the Full Classical Policy Optimization}

Set $L_2=1/S$, $h_2=L_2^S/S!$, and $q_{\mathrm{tail}}=L_2/(S+1)=1/[S(S+1)]$. For $j\ge0$, $\Pr(D_2=S+j)\le h_2q_{\mathrm{tail}}^j$.
\begin{equation}
 t_S\le\frac{h_2}{1-q_{\mathrm{tail}}},\qquad
 \frac{\mathbb E(D_2-S)^+}{h_2}\le\frac{q_{\mathrm{tail}}}{(1-q_{\mathrm{tail}})^2},\qquad
 S-\frac1S\le\frac{I}{h_2}\le S-\frac1S+\frac{2q_{\mathrm{tail}}}{(1-q_{\mathrm{tail}})^2}<S.
 \label{eq:imp-tail-estimates}
\end{equation}
Here we used $h_2<1$ and $F_{\mathrm{imp}}(S)=h_2(S-L_2)+(1+h_2)\mathbb E(D_2-S)^+$. The last strict inequality follows from $2q_{\mathrm{tail}}/(1-q_{\mathrm{tail}})^2<1/S$ for $S\ge10$. Since $Sh_2\to0$, \eqref{eq:imp-shipment-rate} yields $S\nu\to1$, while \eqref{eq:imp-tail-estimates} gives $I/(h_2S)\to1$.

\begin{lemma}\label{lem:imp-global-cycle-limit}
For fixed $0<\alpha\le1/2$ and $k=\alpha h_2S^2$,
\begin{equation}
 (1+\sqrt{2\alpha})S-\frac52-\frac1S
 \le\frac{J_{\mathrm{imp}}(k)}{h_2}\le\frac{I}{h_2}+\sqrt{2\alpha}\,S.
 \label{eq:imp-global-cycle-bounds}
\end{equation}
In particular, $J_{\mathrm{imp}}(k)/(h_2S)\to1+\sqrt{2\alpha}$.
\end{lemma}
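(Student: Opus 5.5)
Both bounds in \eqref{eq:imp-global-cycle-bounds} will come from explicit estimates on the single-cycle objective
\[
 \Phi(m,\sigma)=\frac km+\frac1m\sum_{j=0}^{m-1}F_{\mathrm{imp}}(\sigma+j).
\]

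\textbf{Upper bound.} I will pick a feasible pair $(m,\sigma)$ with $\sigma=S$ and $m$ equal to an integer near $\sqrt{2k/h_2}=\sqrt{2\alpha}\,S$. For $j\ge0$ the convexity of $F_{\mathrm{imp}}$ and its right slope at most $h_2$ give $F_{\mathrm{imp}}(S+j)\le F_{\mathrm{imp}}(S)+h_2j=I+h_2j$. Averaging over $j$ yields $\Phi(m,S)\le I+k/m+h_2(m-1)/2$. Minimizing $k/m+h_2(m-1)/2$ over integers gives at most $\sqrt{2kh_2}=\sqrt{2\alpha}\,h_2S$. Concretely, take $m=\lceil\sqrt{2k/h_2}\rceil$ and check $k/m+h_2(m-1)/2\le\sqrt{2kh_2}$ using $m-1<x\le m$ with $x=\sqrt{2k/h_2}$; this is elementary AM--GM. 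The $-1$ in $m-1$ supplies exactly the slack needed for the rounding. This proves the right inequality.

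\textbf{Lower bound.} For every integer $\sigma$ and $m\ge1$, I will use the pointwise floor $F_{\mathrm{imp}}(z)\ge h_2(z-L_2)$, which follows from $\phi(x)\ge h_2x$. I also need a backlog floor for low positions. Since $\Pr(D_2=0)=e^{-1/S}$, for $z\le0$ we have $F_{\mathrm{imp}}(z)\ge\mathbb E(D_2-z)^+\ge -z$. So each position $z\le S-1$ costs at least $\min\{-z,\ldots\}$; more usefully, $F_{\mathrm{imp}}(z)\ge\mathbb E(z-D_2)^-\ge(1-e^{-1/S})\cdot(\ldots)$. The key fact is that for $z<S$ the backlog probability is tiny only when $z$ is near $S$. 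For $z\le S-1$, $\Pr(D_2\ge z)$ is at least $\Pr(D_2=z)$, and for small $z$ this is huge relative to $h_2$.

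I will split the cycle into positions $z\ge S$, costed by the linear floor $h_2(z-L_2)$, and positions $z\le S-1$. For the latter I need $F_{\mathrm{imp}}(z)\ge h_2S(1-o(1))$, i.e.\ not cheaper than holding at level $S$. Indeed $F_{\mathrm{imp}}(z)\ge\Pr(D_2\ge z+1)\ge\Pr(D_2=S)\ge h_2e^{-1/S}$ only gives $h_2$. That is too weak, so I will instead use $\Pr(D_2=z+1)=e^{-L_2}L_2^{z+1}/(z+1)!$. For $z\le S-1$ this is at least $h_2\cdot S\cdot(\text{ratio}\ge S\cdot\ldots)$, because each step down multiplies by $(z+1)/L_2=S(z+1)$. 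Thus $F_{\mathrm{imp}}(S-1)\ge e^{-1/S}h_2S^2\ge$ the cost at $S$, and lower positions are even more expensive.

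Hence every position costs at least $\min\{h_2(S-L_2)e^{-1/S},\ h_2(z-L_2)\}$. This essentially says a cycle pays at least $h_2(\max(z,S)-1/S)$ up to $O(h_2)$ errors. A cycle of $m$ consecutive positions then has average inventory at least $h_2(S+(m-1)/2\cdot\text{adj})-O(h_2)$. More carefully, the best placement starts at $\sigma\approx S$, and positions below $S$ are replaced by cost $\approx h_2S$. So the average is at least $h_2\bigl(S+(m-1)/2-\text{(correction from positions placed below }S)\bigr)$. Pushing the window below $S$ saves nothing, and replaces the linear growth with a flat cost of about $h_2S$. That flat cost is still at least $h_2S$, so placing $\sigma<S$ only lowers the average by at most an $O(h_2)$ constant. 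Adding $k/m$ and minimizing $k/m+h_2m/2$ over real $m>0$ gives $\sqrt{2kh_2}$. The constants $-5/2-1/S$ absorb the $-1/2$ from $(m-1)/2$, the $L_2$ shift, and the $e^{-1/S}$ factor times $S$, which is at most $1$.

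\textbf{Main obstacle.} The hard step is making the "below $S$ is at least as costly as $S$" estimate uniform with explicit constants, including the $e^{-1/S}$ factor and the hypothesis $\alpha\le1/2$. I will use $\alpha\le1/2$ only to keep $\sqrt{2\alpha}S\le S$, which prevents large $m$ from making a wrap-around placement preferable. I will verify these constants carefully.
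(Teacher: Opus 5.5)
\textbf{Upper bound.} Your upper bound is correct and is the paper's argument: start the block at $S$, use $F_{\mathrm{imp}}(S+j)\le I+h_2j$, and round to $m=\lceil\sqrt{2k/h_2}\rceil$.

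\textbf{The gap in the lower bound.} The lower bound has a genuine gap. There is first an off-by-one error. Position $z$ is short only when $D_2\ge z+1$, so the shortage part of $F_{\mathrm{imp}}(S-1)$ is of order $\Pr(D_2=S)\approx h_2$, not $e^{-1/S}h_2S^2$. In fact $F_{\mathrm{imp}}(S-1)<F_{\mathrm{imp}}(S)$. The first expensive position is $S-2$, which is short on $\{D_2=S-1\}$, an event of probability $e^{-1/S}h_2S^2$.

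The serious problem is what you do next. You replace the cost of positions below $S$ by the flat value of about $h_2S$, i.e.\ you use a floor $F_{\mathrm{imp}}(z)\ge h_2(\max\{z,S\}-c)$ for a small constant $c$. (Your ``$\min$'' should be a $\max$.) No floor of this kind can produce the term $\sqrt{2\alpha}\,S$. Because it is flat below $S$, the block $\{S-m,\ldots,S-1\}$ has floor average $h_2(S-c)$ for every $m$. Letting $m\to\infty$ also removes $k/m$, so the floor yields at most $J_{\mathrm{imp}}(k)\gtrsim h_2(S-c)$. Your claim that moving $\sigma$ below $S$ lowers the average by only $O(h_2)$ is therefore false for this floor. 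It lowers it by $h_2(m-1)/2$, which is precisely the cycle-inventory term you need to keep.

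\textbf{What is missing.} You need an exclusion argument that pins the block at $\sigma\ge S-2$ before any linear floor is used. That argument must control $m$ as well as $\sigma$: the large shortage cost of a low position enters with weight $1/m$, so it is diluted in long cycles. The paper proceeds in four steps.
\begin{enumerate}
\item It compares everything with the reference candidate $m=\sigma=S$, whose cost is below $2h_2S$. This uses $I<h_2S$ and $\alpha\le 1/2$.
\item Lots with $m\ge 10S$ are excluded: since $\phi(x)\ge h_2|x|$, any average over $m$ consecutive positions is at least $h_2(m-1)/4$.
\item For $m<10S$ and $\sigma\le S-3$, position $\sigma$ alone is short by at least one on $\{D_2=S-2\}$. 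It therefore contributes at least $\Pr(D_2=S-2)/m$. Since $\Pr(D_2=S-2)/h_2=e^{-1/S}S^3(S-1)$, this exceeds $2h_2S$ for $S\ge 10$.
\item Only on the remaining blocks, $\sigma\ge S-2$, is the floor $F_{\mathrm{imp}}(z)\ge h_2(z-L_2)$ applied. This gives $S-2-1/S+\alpha S^2/m+(m-1)/2$, and your completion of the square finishes the bound.
\end{enumerate}
Two consequences for your constants. The $-2$ in $-5/2$ comes from $\sigma\ge S-2$, not from $S(1-e^{-1/S})$. The hypothesis $\alpha\le1/2$ also keeps the target $(1+\sqrt{2\alpha})S-5/2-1/S$ below $2S$, so every excluded block satisfies it automatically.
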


\proof{Proof.} The upper slope of $F_{\mathrm{imp}}$ is at most $h_2$, so the block starting at $S$ gives
\[
 J_{\mathrm{imp}}(k)\le I+\inf_{m\in\mathbb N}\left\{\frac{k}{m}+\frac{h_2(m-1)}2\right\}
 \le I+\sqrt{2kh_2}.
\]
For the last inequality put $x=\sqrt{2k/h_2}$ and $m=\lceil x\rceil$. Writing $d=m-x\in[0,1)$, the difference between $x^2/(2m)+(m-1)/2$ and $x$ is $d^2/(2m)-1/2\le0$. For a reference candidate, choose lot size $S$ and the block starting at $S$. Since $F_{\mathrm{imp}}(S+j)\le I+h_2j$, $I<h_2S$, and $\alpha\le1/2$,
\[
 J_{\mathrm{imp}}(k)\le\frac{k}{S}+\frac1S\sum_{j=0}^{S-1}F_{\mathrm{imp}}(S+j)
 \le\alpha h_2S+I+\frac{h_2(S-1)}2<2h_2S.
\]

We now exclude all lot sizes and placements that could escape the claimed lower bound. Because $\phi(x)\ge h_2|x|$, the average loss over any $m$ consecutive positions is at least $h_2(m-1)/4$. Indeed, pairing the two ends of an equally spaced block and using the triangle inequality gives a minimum average absolute deviation of $\lfloor m^2/4\rfloor/m\ge(m-1)/4$; the estimate holds for every realized demand and every placement of the block. Thus, every lot size $m\ge10S$ costs more than the reference candidate. For $m<10S$, a block whose lowest integer position satisfies $\sigma \le S-3$ incurs at least $\Pr(D_2=S-2)/m$ in shortage cost: on $D_2=S-2$, that position is short by at least $S-2-\sigma\ge1$, and it has weight $1/m$. Moreover, \(\Pr(D_2=S-2)/h_2=e^{-1/S}S^3(S-1)>S^2(S-1)^2\). After division by $10S$, the right side exceeds $2S$ for $S\ge10$. These placements cost more than the reference candidate.

Every remaining block has $\sigma \ge S-2$. Since $F_{\mathrm{imp}}(z)\ge h_2(z-L_2)$, its cost divided by $h_2$ is at least
\[
 S-2-\frac1S+\frac{\alpha S^2}{m}+\frac{m-1}{2}
 \ge(1+\sqrt{2\alpha})S-\frac52-\frac1S.
\]
The last inequality follows by writing the setup and cycle-inventory terms as a square:
\[
 \frac{\alpha S^2}{m}+\frac m2-\sqrt{2\alpha}\,S
 =\left(\frac{\sqrt\alpha\,S}{\sqrt m}-\frac{\sqrt m}{\sqrt2}\right)^2\ge0.
\]
The square measures the cost of choosing a lot size above or below the scale that balances the two terms. Its nonnegativity holds for every positive integer $m$, so no continuous optimizer is substituted for the physical lot size. The excluded blocks exceed $2S$, whereas the displayed lower bound is below $2S$ because $\alpha\le1/2$. Thus, the bound covers the infinite optimization domain and proves the lemma.\Halmos\endproof

The probability mass at $S-2$ has a specific role. Although this mass is small, its ratio to $h_2$ grows as a polynomial of degree four in $S$. It prevents a classical policy from avoiding cycle inventory by moving the entire block to low positions with substantial backlog. The separate large-lot exclusion ensures that spreading this shortage contribution across an arbitrarily large cycle cannot circumvent that restriction. Thus, the argument retains both the integer placement decision and the unbounded lot-size decision before applying the elementary setup--inventory tradeoff.

\subsection{The Ratio, a Finite Example, and Positive Primitives}

The feasible policy has cost $I+k\nu$, so the preceding limits give
\[
 \frac{J_{\mathrm{imp}}(\alpha h_2S^2)}{I+\alpha h_2S^2\nu}
 \longrightarrow\frac{1+\sqrt{2\alpha}}{1+\alpha}.
\]
With $x=\sqrt{2\alpha}$, differentiation gives a numerator proportional to $1-x-x^2/2$. The maximum on $0<\alpha\le1/2$ is attained at $x=\sqrt3-1$, or $\alpha=2-\sqrt3$, and equals $\rho_\circ$. For each $r<\rho_\circ$, choose a sufficiently large finite $S$. Lemma~\ref{lem:imp-classical-equality} and $\OPTsh\le I+k\nu$ then prove \eqref{eq:impossibility-ratio}. The optimal cost $\OPTsh$ is positive: every admissible policy costs at least $\min_zF_{\mathrm{imp}}(z)$, the minimum mismatch cost for a single stage with lead time $L_2$. This minimum is positive because $h_2>0$ and the Poisson demand $D_2$ is nondegenerate.

Set $S=1000$ and $k/h_2=267949$ for the finite example with rational parameters. Then $\alpha=267949/10^6<1/2$, so all global exclusions apply. The integer minimum of $267949/m+(m-1)/2$ occurs at $m=732$, since \((732\cdot731)/2\le267949\le(732\cdot733)/2\). Using the retained placement bound $\sigma \ge998$ gives
\begin{align}
 \frac{J_{\mathrm{imp}}(k)}{h_2}&\ge998-\frac1{1000}+\frac{267949}{732}+\frac{731}{2}
 =\frac{316507567}{183000},\label{eq:imp-finite-numerator}\\
 \frac{I+k\nu}{h_2}&<1000+267949\left(\frac{1001}{10^6}+\frac1{10^{12}}\right)
 =1268.216949267949.\label{eq:imp-finite-denominator}
\end{align}
The tail contribution in \eqref{eq:imp-shipment-rate} is below $10^{-12}$ because $h_2/(1-q_{\mathrm{tail}})<10^{-12}$. Dividing the two rational bounds gives a number greater than $1.3637647$ and, hence, greater than $1.36$. No truncated Poisson evaluation or simulated policy cost is used.

The integer minimizer can be checked without searching lot sizes. If $f(m)=267949/m+(m-1)/2$, then $f(m+1)-f(m)=1/2-267949/[m(m+1)]$. This difference changes sign at $m=732$, by the two inequalities above. Even coarse rounding of the proved bounds suffices to verify the requested threshold: the numerator exceeds $1729$, and the denominator is below $1269$, while \(1729/1269>34/25=1.36\) and \(25\cdot1729-34\cdot1269=79>0\). These small integers already prove impossibility at $1.36$.

Finally, fix a finite boundary instance with strict ratio greater than $r$. Introduce $h_1=\varepsilon>0$, $K_2=\eta>0$, and $L_1=\delta>0$, retaining every other primitive. Maintain an internal copy of the boundary policy, called the shadow policy, whose downstream lead time is zero. Use its inventory state to reproduce its upstream orders and downstream dispatches. Upstream stock availability and the dispatch decisions are unchanged; physical downstream deliveries occur $\delta$ later. The construction remains adapted and materially feasible. If $P_1(t)$ is the quantity dispatched during $(t-\delta,t]$, actual downstream net inventory is $X(t)-P_1(t)$. Its backlog exceeds the shadow backlog by at most $P_1(t)$, and the total net echelon inventory $Y$ is unchanged. The mean pipeline quantity is $\delta$ by unit flow balance. Downstream physical inventory is bounded by $S$; the weaker bound $S+P_1(t)$ also covers inventory assigned to that echelon while in transit. Therefore, the new policy cost is at most
\begin{equation}
 I+k\nu+\eta+\varepsilon(S+\delta)+(1+h_2)\delta.
 \label{eq:imp-positive-perturbation}
\end{equation}
For the cost estimate, write $B_0=X^-$ and $B_\delta=(X-P_1)^-$. Then $0\le B_\delta-B_0\le P_1$, and Stage-2 physical inventory changes from $Y+B_0$ to $Y+B_\delta$. The increase in the original holding and backlog charges is consequently at most $(1+h_2)P_1$. Each unit dispatched downstream spends exactly $\delta$ time in its new pipeline. Integrating pipeline quantity over time, therefore, contributes $\delta$ per dispatched unit, apart from the initial and terminal windows. Unit flow balance and their bounded expectations give $\mathbb E P_1=\delta$ in stationarity. Upstream setups add $\eta$ because one upstream order is placed per demand; the downstream dispatch sequence has its original setup cost $k\nu$.

The perturbed policy has the same inventory-position paths as the shadow policy, so it satisfies the same unfinished-path condition. The additional fixed-window pipelines have finite moments, and all the other admissibility conditions established above continue to hold. For every classical policy, the dispatch-size bound remains $Q_2$, and its upstream echelon marginal law is unchanged. With $Y=\mathit{IL}_2$, material feasibility gives $\mathit{IL}_1\le\mathit{IP}_1\le Y$. Hence, $\mathsf B=\mathit{IL}_1^-\ge Y^-$. Its physical running cost, therefore, satisfies \(c_I\ge h_2I_2+\mathsf B=h_2Y+(1+h_2)\mathsf B\ge\phi(Y)\). Thus, its shipment cost is still at least the original $J_{\mathrm{imp}}(k)$. Taking $\varepsilon,\eta,\delta$ sufficiently small preserves the strict comparison with $r$. This proves the positive-primitive claim and completes Theorem~\ref{thm:impossibility}.

Specifically, for $r>0$, let $\Delta=J_{\mathrm{imp}}(k)-r(I+k\nu)>0$ on the selected finite boundary instance. Choose the three positive perturbations so that $\eta+\varepsilon(S+\delta)+(1+h_2)\delta<\Delta/(2r)$. The new competitor then costs less than $(J_{\mathrm{imp}}(k)-\Delta/2)/r$, while every classical policy still costs at least $J_{\mathrm{imp}}(k)$. The resulting strict gap establishes the claim without assuming the continuity of an optimized policy value. Values $r\le0$ are immediate from positive costs on these instances.

The feasible competitor uses variable downstream shipment quantities. Consequently, we compare with $\OPTsh$; the same competitor cost provides no upper bound on the fixed-batch optimum $\OPTlot$. The benchmark ordering alone implies no impossibility bound for $\OPTlot$.
\section{Numerical Evaluation and Accuracy}\label{app:computation}

\subsection{Policy Evaluation and the Denominator}

For each enumerated \((Q_1,n,\ell)\), let \(J_n\) and \(V_{Q_1}\) be independent cycle positions, uniform on \(\{0,\ldots,n-1\}\) and \(\{0,\ldots,Q_1-1\}\), respectively, and independent of demand. Form \(D_{\rm eff}=D_1+V_{Q_1}+(D_2-\ell-Q_1J_n)^+\). An integer \(p/(h_1+h_2+p)\)-quantile of this effective demand minimizes the downstream mismatch loss. Indeed, for any integer-valued \(X\), \(\E\phi_{a,b}(s+1-X)-\E\phi_{a,b}(s-X) =(a+b)\Pr(X\le s)-b\). These differences are nondecreasing in \(s\). Choosing their first nonnegative index gives a minimizer, and a zero difference gives adjacent minimizers with equal cost. With endpoint \(s\), the controls are \((R_1,R_2,Q_1,Q_2)=(s-Q_1,s-Q_1+\ell,Q_1,nQ_1)\). Adding the setup costs \(k_1/Q_1+k_2/(nQ_1)\) to \(H_0+h_2\E(\ell+Q_1J_n-D_2)^+ +\E\phi_{h_1+h_2,p}(s-D_{\rm eff})\) gives the lot-accounted cost in Proposition~\ref{prop:rnq-account-main}. Either tied integer quantile is valid.

The broad grids initially use \(Q_1\le20\), \(n\le15\). When a selected control approaches a search limit, both limits increase to 40; this happened at 13 of the 1,801 distinct points. For clipped Stage-2 support \(\{0,\ldots,T_2\}\), the placement search covers \(-(n-1)Q_1\le\ell\le T_2\). Below this range, every summand in the cost formula has an upstream shortage, and a shift of the downstream endpoint absorbs any further decrease; above it, every summand has upstream stock, and additional reserve only increases the holding cost. The downstream-lot checks use the independent uniform position $V_{Q_1}$ to bound the cost of each unsearched downstream lot size from below and compare those bounds with the best-found policy cost. These checks pass at all broad-grid points. Larger upstream ratios are not excluded analytically, so the reported policy is the best one found within the stated menus.

The denominator is the largest evaluated feasible value of \((1-u)V_0+u\mathcal A(k_1/u,k_2/u)\), including the zero-setup value. A finite linear program maximizes a version of this objective in which each single-stage problem is restricted to a finite menu of cycles. Restricting the cycles can overstate their infima, so the selected scale and allocation are reevaluated with the integer-cycle searches and stopping arguments below. Fixed-scale allocation searches provide additional candidates. The resulting denominator can fall below the supremum \(\LBsp\). Every exact candidate value remains a valid lower bound by Appendix~\ref{app:zero-setup}.

For each allocated single-stage component with slopes \(a,b>0\), the cycle search uses the bound
\[
 \mathcal H_m\bigl(\E\phi_{a,b}(\,\cdot-D)\bigr)
 \ge\min_x\E\phi_{a,b}(x-J_m)
 \ge\frac{ab}{a+b}\frac{m-1}{2}.
\]
The first inequality follows by replacing independent demand by its mean inside a convex loss and then shifting the chosen endpoint. For the second, a convex function \(g\) satisfies, when \(m\ge2\),
\[
 \frac1m\sum_{j=0}^{m-1}g(j)
 \ge\frac{g(0)/2+\sum_{j=1}^{m-2}g(j)+g(m-1)/2}{m-1}
 \ge\frac1{m-1}\int_0^{m-1}g(x)\,dx.
\]
The first inequality follows by bounding each interior value by the chord joining the endpoints and summing those chord bounds. The second integrates the chord above \(g\) on each unit interval. Apply this to \(g(x)=\phi_{a,b}(s-x)\). The integral is minimized at \(s=b(m-1)/(a+b)\), with value \(ab(m-1)/(2(a+b))\) after division by \(m-1\). The case \(m=1\) is immediate. Once \(m\) exceeds \(\sqrt{2k(a+b)/(ab)}\), the resulting lower bound \(k/m+ab(m-1)/(2(a+b))\) is increasing. A reference candidate whose cost lies below that bound excludes every larger cycle. Without this stopping argument, a finite menu gives an upper estimate of the component infimum, unsuitable for a lower-bound denominator.

\subsection{Components with a Zero Holding or Backlog Rate}

An allocation may assign zero holding or backlog cost to a component even when the original system has strictly positive rates. The two-sided stopping bound above then has a zero denominator and must be replaced by the exact one-sided infimum. For nonnegative integer lead-time demand \(D\) with finite mean, define \(F(x)=\E\phi_{a,b}(x-D)\). If \(a=0\) or \(b=0\), then
\begin{equation}
 \mathcal C_F(k)=0\qquad(k\ge0).
 \label{eq:app-one-sided-component}
\end{equation}
This statement concerns the optimized single-stage component. Its setup charge is retained while the lot size and placement are allowed to vary.

If \(b=0\), choose endpoint \(s=0\). All cycle positions \(s-J_m\) are nonpositive, as is their difference from nonnegative demand, so the holding loss is exactly zero. The cost is \(k/m\), which approaches zero as the finite integer cycle size \(m\) increases. If \(a=0<b\), fix any positive tolerance \(\epsilon\), and first choose a finite cycle size with \(k/m<\epsilon/2\). Then choose a sufficiently large finite integer endpoint \(s\) so that \(b\E(D+J_m-s)^+<\epsilon/2\). Such an endpoint exists because \(D+J_m\) has finite mean. The total cost is below \(\epsilon\). The nonnegativity of costs gives the reverse bound in both cases, proving \eqref{eq:app-one-sided-component}. When both slopes become zero, only \(k/m\) remains.

This treatment matters for a numerical lower bound. Minimizing a one-sided component over only a finite menu can return a positive value even though its true infimum is zero. Substituting that positive number would overstate the component lower bound. The exact value zero should, therefore, be used whenever an allocated slope becomes zero. For strictly positive slopes, the stopping argument excludes the unsearched cycle sizes. These two cases together explain how a feasible allocation and scale can be evaluated without turning a finite search minimum into an unjustified system lower bound. The other component and the zero-setup contribution can keep the full serial cost positive.

\subsection{Numerical Summaries}

The three broad panels each contain \(25\times25\) points. Their intersections give 1,801 distinct primitive vectors. After deduplication, the ratio of the best-found policy cost to the evaluated $\LBsp$ candidate has a maximum of 1.2265, a median of 1.1208, and a 90th percentile of 1.1928. The respective panel maxima are 1.2265, 1.2021, and 1.1874. Summaries count each distinct primitive vector once, whereas a displayed panel retains its own intersections.

The separate 625-point stress grid has maximum ratio 1.6004. At that point, the best-found policy is \((-30,6,30,30)\), with cost 0.060576 and an evaluated $\LBsp$ candidate of 0.037852. Its common-lot size search was extended through 320. The comparison at \(K_2=0\) described next gives a lot-accounted bracket of width \(\lambda K_2/30\) under the approximated demand distributions. These computations remain subject to the numerical errors discussed under Infinite Poisson Tails.

More explicitly, at \(K_2=0\) each summand of \eqref{eq:app-exact-cost}, together with \(k_1/Q\), is the cost of a common-lot policy with reserve \(\ell+jQ\). The average cannot improve on the best common-lot choice. If lot size 30 attains the minimum \(C_0\), the monotonicity in the upstream setup coefficient gives \(C_0\le\inf_{\Pi\in\mathcal P_{\RnQ}}\Clot(\Pi) \le C_0+\lambda K_2/30\). The computation checks all common lot sizes through 320 and excludes larger downstream lot sizes with the stated stopping argument. This explains the numerical bracket without imposing an upper bound on the upstream multiplier.

\subsection{Infinite Poisson Tails}

We retain probability mass up to a cutoff with a nominal upper-tail probability of at most \(10^{-13}\), and assign the omitted mass to the cutoff. In exact arithmetic this replaces \(D\sim\operatorname{Poisson}(\mu)\) by \(D^T=\min(D,T)\). Let \(q_j=\Pr(D\ge j)\), \(f_j=\Pr(D=j)=e^{-\mu}\mu^j/j!\), and \(\epsilon(\mu,T)=\E(D-T)^+\). Then
\[
 \epsilon(\mu,T)=\mu q_T-Tq_{T+1},\qquad
 \epsilon(\mu,T)\le\frac{f_{T+1}}{(1-\mu/(T+2))^2}
 \quad\text{if }T+2>\mu.
\]
The inequality follows from \(f_{T+r}\le f_{T+1}[\mu/(T+2)]^{r-1}\) and the geometric sum \(\sum_{r\ge1}r x^{r-1}=(1-x)^{-2}\). Thus, a loss whose absolute slope is at most \(L\) changes by at most \(L\epsilon(\mu,T)\). The bound is uniform over cycle placements and sizes, so taking an infimum does not amplify it.

For the complete policy expression, put \(L=\max\{h_1+h_2,p\}\) and \(\epsilon_i=\epsilon(\lambda L_i,T_i)\). Coupling each clipped demand with its original value gives the uniform error bound \(L\epsilon_1+(L+h_2)\epsilon_2\). Setup rates and the constant \(H_0\) are unchanged. This controls the unbounded losses through their first-moment tails, including in the stress region. Directed rounding and interval arithmetic for the probability and cost calculations would turn these bounds into intervals guaranteed to contain the exact values. The calculations use ordinary floating-point arithmetic; the weighted-loss routine additionally removes weights at most \(10^{-16}\) and renormalizes. The summaries, therefore, report numerical evidence. The analytic guarantee does not depend on these evaluations.

For example, \(\mu=1\) and \(T=20\) give \(\epsilon(1,20)\le e^{-1}(22/21)^2/21!<2.2\times10^{-20}\). The numerical size of a cost error also depends on its loss slope, which can be large in the stress experiment. The probability-tail target alone is, therefore, insufficient to guarantee cost accuracy. The additional weight deletion has a separate bound: if mass \(\delta<1\) is removed from a finite distribution and the remainder is normalized, a loss with range at most \(B\) on that support changes in expectation by at most \(\delta B\). This follows by writing the original expectation as the mixture of the retained and deleted conditional expectations. For a loss function with bounded absolute slope, \(B\) is at most its slope bound times the support diameter, regardless of any horizontal shift. Keeping these two errors separate would permit an implementation with rigorous numerical error bounds without changing the optimization formulas.

\subsection{Shipment Accounting}

The policy numerator charges every complete lot. Consequently, it also bounds the shipment-accounted cost of the same controls. For a separate direct check, unit downstream lots with \(R_1=-1\) have \(\mathit{IP}_1=\min\{0,\mathit{IL}_2\}\). If \(Y\) is uniform on \(\{R_2+1,\ldots,R_2+Q_2\}\), independent of \(D\sim\operatorname{Poisson}(\lambda L_2)\), their shipment-accounted cost is
\begin{equation}
 (h_2+p)\lambda L_1+p\E(D-Y)^++h_2\E(Y-D)^++k_1\left[\Pr(D\le Y-1)+\frac{\Pr(D\ge R_2+1)}{Q_2}\right]+\frac{k_2}{Q_2}.
 \label{eq:app-unit-shipment-cost}
\end{equation}
The nonpositive downstream position gives \(I_1=0\). The lead-time identity implies \(\E\mathsf B=\lambda L_1+\E(D-Y)^+, \E I_2=\lambda L_1+\E(Y-D)^+\). The second equality uses \(I_2=\mathit{IL}_2+\mathsf B\) and \(\mathit{IL}_2=Y-D\) in distribution. Thus, upstream echelon holding includes goods in the downstream pipeline.

A customer demand causes a unit dispatch precisely when the pre-demand Stage-2 level is positive. Poisson compensation gives rate \(\lambda\Pr(D\le Y-1)\). Upstream orders arrive at rate \(\lambda/Q_2\). Just before an upstream order arrives, the Stage-2 level equals \(R_2-D\), where the demand since that order was placed is an independent Poisson variable with mean \(\lambda L_2\). Pending downstream requests exist precisely when \(D\ge R_2+1\). All units released at that event share one dispatch charge, giving the second rate \((\lambda/Q_2)\Pr(D\ge R_2+1)\). With positive \(L_2\), demand and arrival epochs are distinct almost surely. When \(L_2=0\), the two indicators are still disjoint at an ordering demand: the first applies if \(R_2\ge0\), and the second if \(R_2\le-1\). This proves \eqref{eq:app-unit-shipment-cost} also at zero lead time. The reported ratios to the evaluated $\LBsp$ candidates continue to use the lot-accounted numerator.

For the stress-grid instance with the largest ratio, the separate evaluation of \eqref{eq:app-unit-shipment-cost} fixes \(R_1=-1\) and \(Q_1=1\), and enumerates \(R_2\in\{-10,\ldots,100\}\) and \(Q_2\in\{1,\ldots,200\}\). The smallest evaluated cost among these 22,200 pairs is \(0.055303\), attained at \((-1,9,1,33)\), as reported in Section~\ref{sec:numerics}. The exact stationary formula is evaluated by a finite Poisson sum.

\subsection{Shipment Simulations}

The separate event-driven simulations charge one setup per positive physical dispatch. The complete-lot rule dispatches all available requested lots together. The modified rule of \citet{HuYang2014} additionally permits an incomplete lot when a full lot is unavailable. At \(Q_1=1\), the rules coincide event by event; the simulations check agreement in their dispatch sequences and cost components. For common lot sizes \(Q_1=Q_2\in\{5,10,20,30,45,60\}\), each dispatch contains one lot, and the simulation intervals contain the exact formula values.

Both simulation searches at this instance first evaluate a coarse menu for 15,000 time units in 10 batches, refine its best 50 candidates for 120,000 units in 24 batches, and select among the best 20 for 400,000 units in 40 batches. Their respective warm-ups are 5,000, 10,000, and 20,000. For fixed $Q_1$, $Q_2$, and $R_2-R_1$, the training calculation adds the same integer to both reorder points and chooses the shift that minimizes the estimated holding and backlog cost. It reuses the event path and setup counts, weighting each inventory-position state by the time spent there. For the final evaluation, we keep the selected controls fixed and use eight independent seeds, each for 600,000 time units after a 20,000-unit warm-up. The reported interval is the replication mean plus or minus the 97.5th percentile of Student's \(t\) distribution with seven degrees of freedom times the replication standard error. It measures uncertainty for those fixed controls; the menu search remains finite.

The classical menu has 735 candidates: \(Q_1\in\{1,\ldots,8\}\), \(Q_2\in\{24,\ldots,40\}\) restricted to multiples of \(Q_1\), and \(R_2-R_1\in\{4,\ldots,18\}\). It selects \((-1,9,1,32)\), with reporting mean 0.055482 and interval \([0.055245,0.055720]\). Equation~\eqref{eq:app-unit-shipment-cost} gives 0.055336 for these controls. The direct evaluation above selects \(Q_2=33\), while the simulation selects \(Q_2=32\); both choices lie in the simulation menu. Their formula costs differ by about \(0.000034\), and the simulation estimates reverse their ordering. The incomplete-lot menu allows every \(Q_2\) in that range and restricts \(Q_1\) to \(\{2,\ldots,8\}\), giving 1,785 candidates. It selects \((-2,8,2,33)\), with mean 0.055608 and interval \([0.055394,0.055822]\). Its reported replications contain 55 to 82 incomplete dispatches each. These results support the comparison in Section~\ref{sec:numerics} within the evaluated search ranges.

The validation has three distinct parts. The event comparison at unit lots checks that the two dispatch rules implement the same material flows when their feasible shipment sizes coincide. The common-lot comparisons check inventory integration and setup counting against an independent exact formula. Finally, the reporting replications check the cost of controls selected on other demand paths. Their upstream-order and downstream-unit rates are also compared with the required rates \(\lambda/Q_2\) and \(\lambda\); the reported rates satisfy the specified 1.5\% relative-error screen. This flow check detects implementation errors or severe transient effects, but it is not used as a statistical confidence statement. The independent-seed intervals are conditional on fixed controls and do not measure uncertainty for policies outside the search.
\label{end:appendix}
\end{document}